\newcommand{\rev}[1]{\textcolor{black}{#1}}
\newcommand{\Fir}[1]{\textcolor{black}{#1}}
\newcommand{\Cag}[1]{\textcolor{black}{#1}}
\DeclareMathOperator*{\argmin}{arg\,min}
\newcommand{\N}{\mathbb{N}}
\newcommand{\R}{\mathbb{R}}
\renewcommand{\S}{\mathbb{S}}
\newcommand{\T}{\mathsf{T}} 
\newcommand{\X}{\mathcal{X}}
\newcommand{\D}{\mathcal{D}}
\renewcommand{\H}{\mathcal{H}}
\newcommand{\W}{\mathcal{W}}
\newcommand{\Wkn}{\mathcal{W}_{\text{known}}}
\newcommand{\Dkn}{\mathcal{D}_{\text{known}}} 
\newcommand{\Dunkn}{\mathcal{D}_{\text{unknown}}} 
\newcommand{\V}{\mathcal{V}}
\newcommand{\Vkn}{\mathcal{V}_{\text{known}}}
\newcommand{\Vunkn}{\mathcal{V}_{\text{unknown}}}
\renewcommand{\P}{\mathcal{P}}
\DeclareMathOperator{\wMin}{wMin}
\DeclareMathOperator{\Min}{Min}
\DeclareMathOperator{\Max}{Max}
\DeclareMathOperator{\bd}{bd}
\DeclareMathOperator{\Int}{int}
\DeclareMathOperator{\rec}{recc}
\DeclareMathOperator{\conv}{conv}
\DeclareMathOperator{\cl}{cl}
\DeclareMathOperator{\cone}{cone}
\DeclareMathOperator{\ri}{ri}
\DeclareMathOperator{\epi}{epi}
\DeclareMathOperator{\ind}{Ind}
\newcommand{\norm}[1]{\left\lVert#1\right\rVert}
\newcommand{\abs}[1]{\lvert#1\rvert}
\newtheorem{theorem}{Theorem}
\numberwithin{theorem}{section}
\newtheorem{proposition}[theorem]{Proposition}
\newtheorem{lemma}[theorem]{Lemma}
\newtheorem{corollary}[theorem]{Corollary}
\newtheorem{definition}[theorem]{Definition}
\newtheorem{remark}[theorem]{Remark}
\newtheorem{assumption}[theorem]{Assumption}
\newtheorem{example}[theorem]{Example}
\newcounter{algo}
\newcommand{\of}[1]{\ensuremath{\left( #1 \right)}}
\author{\c{C}a\u{g}in Ararat\thanks{Bilkent University, Department of Industrial Engineering, Ankara, 06800 Turkey, cararat@bilkent.edu.tr}
	\and 	S{\.i}may Tekg\"ul\thanks{The University of Edinburgh, School of Mathematics, Edinburgh, EH9 3FD United Kingdom, s.tekgul@ed.ac.uk} \and F{\.i}rdevs Ulus\thanks{Corresponding author, Bilkent University, Department of Industrial Engineering, Ankara, 06800 Turkey, firdevs@bilkent.edu.tr}}
\title{Geometric duality results and approximation algorithms for convex vector optimization problems}
\date{\today}
\begin{document}
	
	\maketitle
	
	\begin{abstract}
		\rev{We study geometric duality for convex vector optimization problems. For a primal problem with a $q$-dimensional objective space, we formulate a dual problem with a $(q+1)$-dimensional objective space. Consequently, different from an existing approach, the geometric dual problem does not depend on a fixed direction parameter and the resulting dual image is a convex cone. We prove a one-to-one correspondence between certain faces of the primal and dual images. In addition, we show that a polyhedral approximation for one image gives rise to a polyhedral approximation for the other. Based on this, we propose a geometric dual algorithm which solves the primal and dual problems simultaneously and is free of direction-biasedness. We also modify an existing direction-free primal algorithm in a way that it solves the dual problem as well. We test the performance of the algorithms for randomly generated problem instances by using the so-called primal error and hypervolume indicator as performance measures.}
		
	\end{abstract}

		\noindent
	{\bf Keywords:} Convex vector optimization, multiobjective optimization, approximation algorithm, scalarization, geometric duality, hypervolume indicator.\\
	\textbf{Mathematics Subject Classification (2020):} 90B50, 90C25, 90C29. 
	
	\section{Introduction}\label{sect:introduction}
	Vector optimization is a generalization of multiobjective optimization (MO) where the order relation over the objective vectors is determined by a general ordering cone. It has been applied in many fields including risk-averse dynamic programming \cite{birgitor}, financial mathematics \cite{nurtai,recursiveBellman,avar,superhedging}, economics \cite{rudloffulus,econometrica}, game theory \cite{birgitNash,hamellohnegame}.
	
	\rev{In vector optimization, generating efficient solutions which correspond to minimal elements in the objective space can be done either by solving single-objective optimization problems formed by the structure of the original problem, called scalarizations (see \cite{eichfelder2008adaptive} and references therein), or by iterative algorithms which work directly in the decision space (see e.g. \cite{augmentedlagrangian,projectedgradient,newton,inexactprojected,barriertype}).} The \rev{overall aim for many applications is however} to generate either (an approximation of) the whole set of minimal elements in the objective space \cite{benson_1998,psimplex} or the set of all efficient solutions in the decision space \cite{armand}. \rev{Since the dimension of the decision space is in general much higher than that of the objective space and consequently many efficient solutions might map into a single objective value, the former is usually a more affordable goal.} 
	
	By this motivation, Benson \cite{benson_1998} introduced an outer approximation algorithm in 1998 that aims to generate the Pareto frontier of linear multiobjective optimization problems. This has led to vast literature in vector optimization. In the linear case, many variations of Benson's algorithm have been proposed \cite{csirmaz_2016,ehrgott_2012,hamel_2014,lohne_2011,shaoehrgott_2008_1,shaoehrgott_2008_2}.  
	We discuss the \rev{objective space-based} algorithms in the literature for convex vector optimization problems (CVOPs) in more detail below. For non-convex vector optimization problems with special structure, we refer the reader to the recent works \cite{desantis,tradeoffs,bbeichfelder}.
	
	\subsection{\rev{Literature review on objective space-based CVOP algorithms}}
	For CVOPs, there are various approximation techniques in the literature\rev{, see e.g.} \cite{ruzika_2005}. \rev{Here, we focus on the approaches which generate approximations to the entire minimal set in the objective space.}
	In 2011, Ehrgott, Shao and Sch{\"o}bel \cite{ess_2011} proposed an extension of Benson's algorithm for convex MO problems. Later L{\"o}hne, Rudloff and Ulus \cite{ulus_2014} generalized this algorithm for CVOPs \rev{and recently, different variants have been proposed in \cite{dorfler2021benson,irem}.} \rev{These algorithms solve} a Pascoletti-Serafini \cite{ps_1984} scalarization and a vertex enumeration problem in each iteration. This scalarization problem depends on two parameters: a direction vector and a reference point. Specifically, given a point $v$ in the objective space, it determines the closest point (along \rev{a} direction $c$) to $v$ in the objective space as well as a corresponding feasible solution in the decision space. \rev{In \cite{ulus_2014}, the reference point $v$ is selected arbitrarily among the vertices of the current approximation and $c$ is fixed throughout the algorithms, whereas in \cite{dorfler2021benson}, the reference point $v$ and a corresponding direction vector $c$ are selected based on an additional procedure. In \cite{irem}, several additional rules for selecting the two parameters of the Pascoletti-Serafini scalarization for the same algorithmic setup are proposed and compared.}
	
	Recently, for CVOPs, Ararat, Ulus and Umer \cite{umer_2022} considered a norm-minimizing scalarization which requires only a reference point but no direction parameter, and proposed an outer approximation algorithm for CVOPs. As the proposed algorithm does not require a direction parameter, direction-biasedness is not a concern. However, the norm-minimizing scalarization has a nonlinear convex \rev{objective} function, which is not the case for the Pascoletti-Serafini scalarization that is used in the primal algorithm in \cite{ulus_2014}. 
	
	\rev{Apart from these \emph{primal} algorithms,} in \cite{ulus_2014}, the authors also proposed a \rev{\emph{geometric dual}} variant of their primal algorithm, which is based on a geometric duality theory. The general theory of convex polytopes indicates that two polytopes are dual to each other if there exists an inclusion-reversing one-to-one mapping between their faces \cite{grunbaum_2013}. Similar to the duality relation between polytopes, a duality relation between the polyhedral image of the primal problem and the polyhedral image of a dual problem was introduced in \cite{heydelohne_2008} for multiobjective linear programming problems. \rev{Later, Luc \cite{luc-duality} introduced \emph{parametric duality} for these problems and \Cag{studied} the relationship between the parametric and geometric duality notions. \Cag{More recently, the equivalence between the two duality notions has been shown in \cite{dorflerlohne_duality}.}} 
	
	\rev{In \cite{heyde_2013}, a general duality theory is developed for the epigraph of a closed convex function and that of its conjugate function. As a special case, this theory is applied for CVOPs to obtain a duality relation between the primal and dual images; hence, the geometric duality theory \rev{in \cite{heydelohne_2008}} is generalized to the non-polyhedral case.} The geometric dual algorithm that is proposed in \cite{ulus_2014} is based on the \rev{duality relation in \cite{heyde_2013}}. Accordingly, the fixed direction $c$ that is used within the primal solution concept and algorithm is also used in the design of the geometric dual problem and the algorithm. \rev{Different from the primal algorithms, in the geometric dual algorithm of \cite{ulus_2014}, instead of Pascoletti-Serafini or norm-minimizing scalarizations, only the well-known weighted sum scalarizations, which may be easier to deal with because of their simple structure, are solved.}

	\subsection{\rev{The proposed approach and contributions}}
	In this paper, we establish a \emph{direction-free} geometric duality theory and a dual algorithm to solve CVOPs. \rev{More precisely, after recalling the primal solution concept in \cite{umer_2022}, we propose a direction-free geometric dual problem and a solution concept for it. Then, we prove a geometric duality relation between the images of the primal and dual problems, which is based on an inclusion-reversing one-to-one duality mapping. The proof of this relation is based on the general duality theory for epigraphs in \cite{heyde_2013}. We also prove that a polyhedral approximation for the primal image yields a polyhedral approximation for the dual image, and vice versa.} Different from \cite{heyde_2013}, the proposed geometric dual image does not depend on a fixed direction; but in order to handle this issue, the dimension of the objective space for the dual problem is increased by one. \rev{In this sense, it can also be seen as a generalization of the parametric duality for linear MO from \cite{luc-duality} to CVOPs.} Accordingly, the image of the proposed dual problem is not only a convex set but a convex cone. Due to this conic structure of the dual image, the dimension increase compared to \cite{heyde_2013} is not an additional source of computational burden, as also confirmed by the numerical results.
	
	\rev{Based on the geometric duality theory, we propose a dual algorithm which solves the primal and dual problems simultaneously by solving only weighted sum scalarization problems. More precisely, the algorithm gives a finite $\epsilon$-solution to the dual problem; moreover, it gives a finite weak $\tilde{\epsilon}$-solution to the primal problem, where $\tilde{\epsilon}$ is determined by $\epsilon$ and the structure of the underlying ordering cone. We also modify the primal algorithm in \cite{umer_2022} in a way that it returns a finite $\epsilon$-solution to the dual problem as well.}
	
	\rev{We compare the proposed geometric duality and the dual algorithm with the ones in \cite{heyde_2013} and \cite{ulus_2014}, respectively. In particular, we show that the proposed dual image and the dual image in \cite{heyde_2013} can be recovered from each other. Moreover, by fixing a suitable norm, we show that the dual algorithm in \cite{ulus_2014} can be seen as a special case of the proposed dual algorithm.} 
	
	\rev{Finally, we test the performance of the proposed algorithm in comparison to the existing ones using two performance measures: primal error (PE) and hypervolume indicator (HV). While PE simply measures the Hausdorff distance between the primal image and its returned polyhedral approximation, novel to this work, we define HV as a hypervolume-based performance metric for convex vector optimization. Our definition is \Cag{similar to hypervolume-based metrics} for MO, see e.g. \cite{auger_2012,zitzler2007hypervolume,zitzler1999multiobjective,zitzler2003performance}. The computational results suggest that the proposed dual algorithm has promising performance.}
	
	The rest of the paper is structured as follows. \Cref{sec:preliminaries} presents the basic concepts and notation. \Cref{sec:problems} introduces the primal problem, dual problem, and solution concepts for these problems. The geometric duality between the primal and dual problems is studied in \Cref{sec:geomdual}. \rev{\Cref{sec:algorithms} provides the primal and dual algorithms. In \Cref{sect:relation}, we compare the proposed approach with the existing ones in the literature. In \Cref{sec:numres}, we provide some test results for performance comparisons.}
	
	\section{Preliminaries}\label{sec:preliminaries}
	In this section, we provide the definitions and notations used throughout the paper. Let $q\in\N\coloneqq\{1,2,\ldots\}$ be a positive integer. We denote the $q$-dimensional Euclidean space by $\R^q$. Let $\norm{\cdot}$ be an arbitrary norm on $\R^q$. The associated dual norm is denoted by $\norm{\cdot}_\ast $, that is, $\norm{z}_\ast=\textnormal{sup}\{z^{\T}x \mid\norm{x}\leq1\}$ for each $z\in\R^q$. Throughout, $B(0,\epsilon):=\{z \in \mathbb{R}^q \mid \|z\|\leq\epsilon \}$ denotes the norm ball centered at $0\in \R^q$ with radius $\epsilon >0$.
	
	For a set $A \subseteq \R^q$, we denote the convex hull, conic hull, interior, relative interior and closure of $A$ by $\conv A$, $\cone A$, $\Int A$, $\ri A$ and $\cl A$, respectively. Recall that $\cone A:=\{\lambda a \mid a\in A, \lambda \geq 0\}$. \rev{The closed convex cone defined by $A^+=\{y\in \R^q\mid \forall a\in A\colon y^{\T}a\geq 0\}$ is called the \emph{dual cone} of $A$. It is well-known that the dual cone of $A^+$ is given by $A^{++}\coloneqq (A^+)^+=\cl\cone\conv A$ whenever $A$ is nonempty.} The \emph{recession cone} of $A$ is defined by $\rec A =\{c\in\R^q \mid \forall \lambda\geq0,  a\in A \colon a+\lambda c \in A\}$. An element $d \in \rec A$ is a \emph{(recession) direction} of $A$. Let $A\subseteq\R^q $ be convex and $F\subseteq A$ be a convex subset. If $\lambda y^1+(1-\lambda) y^2\in F$ for some $0 < \lambda <1$ holds only if both $y^1$ and $y^2$ are elements of $F$, then $F$ is called a \textit{face} of $A$. A zero-dimensional face is called an \emph{extreme point} \rev{and} a one-dimensional face is called an \emph{edge} \rev{of A. If $A$ is $q$-dimensional, then} a $(q-1)$-dimensional face is called a \emph{facet} of \rev{$A$}. A face of $A$ that is not the empty set and not $A$ itself is called a \emph{proper face} of $A$. \rev{We call $F$ an \emph{exposed face} of $A$ if it can be written as the intersection of $A$ and a supporting hyperplane of $A$. If $A$ is $q$-dimensional, then an exposed face of $A$ is also a proper face; the converse does not hold in general \cite[Section 2.2]{heyde_2013}.}
	
	Given nonempty sets $A,B\subseteq\R^q$, their Minkowski sum is defined by $A+B\coloneqq \{a+b\mid a\in A, b\in B\}$. For $\lambda \in\R$, we also define $\lambda A\coloneqq \{\lambda a\mid a\in A\}$. In particular, we have $A-B = A + (-1)B$.
	
	Let $C\subseteq \R^q$ be a cone. \rev{It} is called \emph{pointed} if \rev{$C\cap -C = \{0\}$}, \emph{solid} if it has nonempty interior, and \emph{nontrivial} if $C \neq\emptyset$ and $C\neq\R^q$. If $C \subseteq \R^q$ is a convex pointed cone, then the relation $\leq_C\coloneqq \{(x,y)\in\R^q\times\R^q \mid y-x\in C\} $ is an antisymmetric partial order on $\R^q$ \cite[Theorem 1.18]{jahn_2004}; we write $x\leq_C y$ whenever $(x,y)\in\leq_C$.
	
	Let $m\in\N$ and $\X\subseteq \R^m$ be a nonempty convex set. A function $f\colon\X \to \R^q$ is said to be \emph{$C$-convex on} $\X$ if
	$f(\lambda x^1+(1-\lambda)x^2)\leq_C \lambda f(x^1)+(1-\lambda)f(x^2)$ for every $x^1,x^2\in\X$ and $\lambda\in [0,1]$ \cite[Definition 2.4]{jahn_2004}. \rev{Given a function $g\colon \R^q\to \R$, the function $g^\ast\colon \R^q\to [-\infty,+\infty]$ defined by $g^\ast (w)\coloneqq\sup_{z\in\R^q}(w^\T z-g(z))$, $w\in\R^q$, is called the \emph{conjugate function} of $g$. For a set $A\subseteq\R^q$, the function $I_A$ defined by $I_A(z)\coloneqq 0$ for $z\in A$, and by $I_A(z)\coloneqq +\infty$ for $z\notin A$ is called the \emph{indicator function} of $A$. In this case, taking $g=I_A$ gives $g^\ast(w)=\sup_{z\in A}w^\T z$ for each $w\in \R^q$; $g^\ast$ is called the \emph{support function} of $A$; we also define the \emph{polar} of $A$ as the set $A^\circ \coloneqq  \{w\in \R^q\mid g^\ast (w)\leq 1\}$.}
	
	Let $A\subseteq \R^q $ and let $C\subseteq\R^q $ be a closed convex pointed cone. The sets
	$\Min_C A\coloneqq \{y\in A\mid (\{y\}-C\setminus\{0\})\cap A=\emptyset\}$, $\wMin_C A\coloneqq \{y\in A\mid (\{y\}-\Int C)\cap A=\emptyset\}$,
	$\Max_C A\coloneqq \{y\in A\mid (\{y\}+C\setminus\{0\})\cap A=\emptyset\}$ are called the sets of \emph{$C$-minimal}, \emph{weakly $C$-minimal}, \emph{$C$-maximal} elements of $A$, respectively \cite[Definition 1.41]{lohne_2011}. \rev{An exposed} face of $A $ that only consists of (weakly) $C$-minimal elements is called a \emph{(weakly) $C$-minimal \rev{exposed} face}. \rev{An exposed} face of $A$ that consists of only $C$-maximal elements is called a \emph{$C$-maximal \rev{exposed} face} \cite[Section 4.5]{lohne_2011}.

	\begin{remark}\label{rem:w_inC+}
		Let $A\subseteq \R^q$ be a nonempty convex set and $C\subseteq \R^q $ be a convex cone. If $A=A+C$ and $w\in \R^q$ such that $\inf_{a\in A}w^{\T}a>-\infty$, then $w\in C^+$. Note that $A=A+\rec A$ holds \cite[Theorem 5.6]{soltan_2015}. 
		If $A\subseteq H\coloneqq\{z\in\R^q\mid w^{\T}z\geq r \} $ for some $w\in \R^q $ and $r\in\R$, then we have $w\in (\rec A)^+ $. 
	\end{remark}
	
	\section{Primal and dual problems}\label{sec:problems}
	In this paper, we consider a convex vector optimization problem and its geometric dual. The primal problem is defined as
	\begin{equation}\tag{P}\label{P}
	\text{minimize } f(x) \text{ with respect to } \leq_C \text{subject to } x \in \X,
	\end{equation}
	where the ordering cone $C\subseteq\R^q $ is nontrivial, pointed, solid, closed and convex; the vector-valued objective function $f\colon \X\to\R^q $ is $C$-convex and continuous; and the feasible set $\rev{\emptyset \neq} \X\subseteq \R^{m}$ is compact and convex. The \emph{upper image} of \eqref{P} is defined as
	\[
	\P \coloneqq \cl(f(\X)+C),
	\]
	where $f(\X)\coloneqq \{f(x)\mid x\in\X\}$ is the image of $\X$ under $f$. The following proposition collects some basic facts about the problem structure. Its proof is straightforward \rev{from the fact that $\X$ is compact}, hence we omit it. 
	
	\begin{proposition}\label{prop:upperimage}
		The upper image $\P$ is a closed convex set, the image $f(\X)$ is a compact set, and it holds
		$\P=f(\X)+C$. Moreover, the primal problem \eqref{P} is \emph{bounded} in the sense that $\{y\}+C\subseteq\P$ for some $y\in\R^q$.
	\end{proposition}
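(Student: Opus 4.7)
The plan is to dispatch the four claims in sequence, each of which reduces to a textbook fact about convex analysis, so no single step should be hard. The key background needed is: (i) continuous images of compact sets are compact; (ii) a Minkowski sum of a compact set and a closed set is closed; (iii) the sum of two convex sets is convex whenever the second one is a convex cone and the first is ``$C$-convex''; and (iv) the closure of a convex set is convex.

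First I would note that, since $\X$ is compact and $f$ is continuous, $f(\X)$ is compact. Second, for convexity of $f(\X)+C$, I would take two arbitrary points $f(x^1)+c^1$ and $f(x^2)+c^2$ in $f(\X)+C$ with $x^1,x^2\in\X$ and $c^1,c^2\in C$, and verify that for $\lambda\in[0,1]$ the convex combination equals $f(\lambda x^1+(1-\lambda)x^2)$ plus an element of $C$; here I would use the $C$-convexity inequality for $f$ together with the fact that $C$ is a convex cone, so that $C+C\subseteq C$. Convexity of $\P$ then follows because the closure of a convex set is convex.

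Third, to show $\P=f(\X)+C$ (and hence that the closure in the definition is redundant), I would invoke the standard fact that if $K\subseteq\R^q$ is compact and $F\subseteq\R^q$ is closed, then $K+F$ is closed: take a convergent sequence $f(x^n)+c^n\to z$, extract a subsequence along which $f(x^n)$ converges (using compactness of $f(\X)$), and conclude that $c^n$ converges to some $c\in C$ by closedness of $C$. Hence $f(\X)+C$ is already closed and equals $\P$. Finally, for boundedness, I would simply pick any $x^0\in\X$ (non-empty by assumption) and set $y\coloneqq f(x^0)$; then $\{y\}+C\subseteq f(\X)+C=\P$.

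None of the steps is a genuine obstacle; the only place where one should pause is the second step, where one must use $C$-convexity of $f$ rather than mere convexity of $f(\X)$ (which need not hold) to obtain convexity of $f(\X)+C$. This is the reason the authors themselves describe the argument as ``straightforward'' and omit it.
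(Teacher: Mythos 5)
Your proof is correct and is precisely the ``straightforward'' argument the paper alludes to when it omits the proof: compactness of $f(\X)$ from continuity of $f$ and compactness of $\X$, convexity of $f(\X)+C$ via $C$-convexity of $f$ together with $C+C\subseteq C$, closedness of $f(\X)+C$ as the sum of a compact and a closed set (whence $\P=f(\X)+C$), and boundedness by taking $y=f(x^0)$ for any $x^0\in\X$. Your closing remark is also apt: the $C$-convexity of $f$ (not convexity of $f(\X)$, which may fail) is the one point where care is needed.
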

	
	For a parameter vector $w \in C^+ $, the convex program
	\begin{equation*}\tag{WS$(w)$}\label{WS(w)}
	\text{minimize } w^{\T}f(x)  \text{ subject to } x \in \X
	\end{equation*}
	is called the \emph{weighted sum scalarization} of \eqref{P}. Let $p^w $ be the optimal value of \eqref{WS(w)}, that is, 
	$p^w:=\inf_{x\in\X} w^{\T}f(x)$. 
	Since $\X$ is a \rev{nonempty} compact set and $f$ is a continuous function, it follows that $p^w\in\R$. The next proposition is a well-known result that will be used in the design of the geometric dual problem.
	\begin{proposition}\label{prop:WS}
		\cite[Corollary 5.29]{jahn_2004} Let $w\in C^+\setminus \{0\}$. Then, an optimal solution $x^w$ of \eqref{WS(w)} is a weak minimizer of \eqref{P}. The converse also holds: for each weak minimizer $x\in \X$ of \eqref{P}, there exists $w \in C^+\setminus\{0\} $ such that $x$ is an optimal solution of \eqref{WS(w)}.
	\end{proposition}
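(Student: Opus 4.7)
The plan is to handle the two directions by fairly standard convex-analytic arguments, the forward one being essentially a definition chase and the converse requiring a separation argument.

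For the forward direction, suppose $x^w$ solves \eqref{WS(w)} but, for contradiction, is not a weak minimizer, so that there exists $x\in\X$ with $f(x^w)-f(x)\in\Int C$. The key subsidiary fact I would use is that any $w\in C^+\setminus\{0\}$ is strictly positive on $\Int C$: if $c\in\Int C$, then for sufficiently small $\epsilon>0$ the element $c-\epsilon w$ still lies in $C$, hence $w^\T(c-\epsilon w)\geq 0$, which forces $w^\T c\geq\epsilon\|w\|^2>0$. Applying this to $c=f(x^w)-f(x)$ yields $w^\T f(x)<w^\T f(x^w)$, contradicting optimality of $x^w$ in \eqref{WS(w)}.

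For the converse, let $x\in\X$ be a weak minimizer and consider the two sets $A_1\coloneqq\{f(x)\}-\Int C$ and $A_2\coloneqq f(\X)+C=\P\setminus\text{(limit points only)}$, or more simply $A_2\coloneqq f(\X)+C$. These are nonempty convex sets with $A_1$ open, and I would first check that they are disjoint: if $f(x)-c_1=f(x')+c_2$ with $c_1\in\Int C$, $c_2\in C$, $x'\in\X$, then $f(x)-f(x')=c_1+c_2\in\Int C+C\subseteq\Int C$, contradicting weak minimality. The standard separation theorem then yields $w\neq 0$ and $\alpha\in\R$ with $w^\T y\leq\alpha$ on $A_1$ and $w^\T y\geq\alpha$ on $A_2$. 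Letting $c_1\to 0$ gives $w^\T f(x)\leq\alpha$, while $w^\T f(x')\geq\alpha$ for all $x'\in\X$, so $x$ solves \eqref{WS(w)}.

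It remains to verify $w\in C^+\setminus\{0\}$. Nonzeroness is built into the separation theorem. For $w\in C^+$, I would invoke the recession-direction remark recalled just before this proposition (Remark 2.1): since $A_2=f(\X)+C$ is bounded below in direction $w$ (because $\inf_{y\in A_2}w^\T y\geq\alpha>-\infty$) and $C\subseteq\rec A_2$, it follows that $w\in C^+$. The main (and only mildly delicate) obstacle is making sure the separation is applied cleanly to the correct convex sets and that the passage from $w\in(\rec A_2)^+$ to $w\in C^+$ is justified; everything else is routine.
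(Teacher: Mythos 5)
Your proof is correct and complete. The paper itself gives no proof of this proposition---it simply cites \cite[Corollary 5.29]{jahn_2004}---and your argument (strict positivity of $w\in C^+\setminus\{0\}$ on $\Int C$ for the forward direction; separation of the open convex set $\{f(x)\}-\Int C$ from $f(\X)+C$ followed by \Cref{rem:w_inC+} to place $w$ in $C^+$ for the converse) is exactly the standard argument underlying that citation, so there is nothing to compare against. The only cosmetic point is that the quantity $\norm{w}^2$ in your positivity step should be read as $w^{\T}w$, i.e.\ the squared Euclidean norm, independently of the norm $\norm{\cdot}$ fixed elsewhere in the paper; this does not affect the conclusion $w^{\T}c>0$.
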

	
	Now, let us define the geometric dual problem of \eqref{P} as
	\begin{equation*}\tag{D}\label{D}
	\text{maximize } \xi(w) \text{ with respect to } \leq_K \text{subject to } w\in \W.
	\end{equation*}
	In this problem, the objective function $\xi\colon\R^q\to\R^{q+1}$ is defined by 
	\begin{equation}\label{eq:dualObj}
	\xi(w)\coloneqq (w_1, \dots, w_q, p^w)^{\T},\quad w\in \W;
	\end{equation}
	the ordering cone $K$ is defined by $
	K\coloneqq \cone\{e^{q+1}\}=\{\lambda e^{q+1} \mid \lambda \geq 0 \}$,
	where $e^{q+1}=(0,\dots ,0,1)^{\T}\in \R^{q+1}$; and the feasible set is $\W\coloneqq C^+$. The \emph{lower image} of \eqref{D} is defined as
	\[
	\D\coloneqq \xi(\W)-K=\{(w^{\T},\alpha)^{\T} \in \R^{q+1} \mid w \in \W, \ \alpha \leq p^w\}.
	\] \begin{remark}\label{rem:D_alternative}
		\rev{Note that the decision space of the dual problem has dimension $q$, which in general is much less than $n$, the number of variables of the primal problem. {However, the dual objective function involves solving another optimization problem. Indeed,} if the feasible region $\X$ of \eqref{P} is given by explicit constraints, then it is also possible to define a geometric dual problem which includes additional dual variables corresponding to these explicit constraints. In particular, the last component of the dual objective function can be defined using the Lagrangian of \eqref{WS(w)} instead of its value directly. This construction would lead to the same lower image, see also \cite[Remark 3.6]{ulus_2014}.}
	\end{remark}
	The following proposition follows from the definition of $\D$, we omit its proof.
	
	\begin{proposition}\label{prop:Dcone}
		The lower image $\D$ is a closed convex cone.
	\end{proposition}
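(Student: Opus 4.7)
The plan is to exhibit $\D$ as the intersection of a family of closed convex cones, which makes all three properties (closedness, convexity, and conicity) immediate.

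First I would rewrite the definition of $\D$ by unfolding $p^w$: a pair $(w^\T,\alpha)^\T\in\R^{q+1}$ lies in $\D$ exactly when $w\in C^+$ and $\alpha\leq w^\T f(x)$ for every $x\in\X$. Hence
\[
\D = \bigl(C^+\times\R\bigr)\;\cap\;\bigcap_{x\in\X}H_x,\qquad H_x:=\{(w^\T,\alpha)^\T\in\R^{q+1}\mid w^\T f(x)-\alpha\geq 0\}.
\]
Each $H_x$ is a closed homogeneous half-space (its defining inequality has zero right-hand side), hence a closed convex cone. The factor $C^+\times\R$ is likewise a closed convex cone, since $C^+$ is a closed convex cone as the dual of $C$. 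Intersections preserve closedness, convexity, and closure under nonnegative scaling, so $\D$ inherits all three properties; together with $(0,0)\in\D$ this gives that $\D$ is a closed convex cone.

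An essentially equivalent alternative, if one prefers to work directly with $p^w$, is to verify separately that (i) $p^{\lambda w}=\lambda p^w$ for every $\lambda\geq 0$ and $w\in C^+$ (positive homogeneity), (ii) $w\mapsto p^w$ is concave on $C^+$ as the pointwise infimum of the linear maps $w\mapsto w^\T f(x)$, and (iii) $w\mapsto p^w$ is upper semicontinuous on $C^+$ as the pointwise infimum of continuous functions, and then read off cone, convex, and closed properties of the hypograph-type set $\D$. I would prefer the intersection argument because it bypasses semicontinuity considerations.

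There is no real obstacle: the one spot that might look delicate is closedness, since $\D$ is defined through the value function $p^w$ which is only an infimum (and therefore only upper semicontinuous), but the intersection-of-half-spaces representation sidesteps this issue entirely because each defining inequality $w^\T f(x)-\alpha\geq 0$ is automatically closed for each fixed $x$. The compactness of $\X$ from Proposition~\ref{prop:upperimage} is not even needed for this statement; it only ensures that $p^w$ is finite so that $\D$ agrees with the set written in the definition.
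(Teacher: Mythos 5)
Your proof is correct, but it takes a different route from the paper's. The paper proves the convex cone property by a direct computation with conic combinations (using that $\W=C^+$ is a convex cone and that $\sum_i\lambda_i\inf_{x}(w^i)^\T f(x)\leq\inf_x(\sum_i\lambda_iw^i)^\T f(x)$), and then proves closedness separately by observing that $w\mapsto p^w$ is continuous on $\W$ and that $\D$ is the hypograph of this function restricted to $\W\times\R$. Your representation $\D=(C^+\times\R)\cap\bigcap_{x\in\X}H_x$ with each $H_x$ a closed homogeneous half-space delivers all three properties in one stroke and, as you note, entirely bypasses the semicontinuity/continuity of the value function --- which is the one mildly delicate point in the paper's argument (the paper appeals to continuity of a finite concave function on $\W$, which is a boundary-sensitive claim in general, though here it does hold since $p^w=\inf_{y\in f(\X)}w^\T y$ is the concave support-type function of a compact set and hence Lipschitz). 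Your remark that compactness of $\X$ is only needed to make $p^w$ finite, so that $\D$ coincides with the displayed description, is also accurate. The trade-off is that the paper's hypograph viewpoint is reused later (e.g.\ in relating $\D$ to $-\epi\tilde p$ in the proof of the duality theorem), whereas your half-space decomposition is self-contained and more elementary for this particular statement.
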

	
	
	We define exact and approximate solution concepts for the primal problem \eqref{P}. 
	
	\begin{definition}\label{def:solnfull}
		\cite[Definition 2.20, Proposition 4.7]{lohne_2011} A point $\bar{x} \in \X$ is said to be a \emph{(weak) minimizer} for \eqref{P} if $f(\bar{x})$ is a (weakly) C-minimal element of $f(\X)$. A nonempty set $\bar{\X}\subseteq\X$ is called an \emph{infimizer} of \eqref{P} if 
		$\cl \conv (f(\bar{\X})+C)=\P$.	An infimizer $\bar{\X}$ of \eqref{P} is called \emph{(weak) solution to \eqref{P}} if it consists of only (weak) minimizers.
	\end{definition}
	
	\rev{Since} the upper image of a convex vector optimization problem is 
	\rev{not a polyhedral set in general}, a finite set $\bar{\X}$ \rev{may} not satisfy the exact solution concept in \Cref{def:solnfull}. Hence, we give an approximate solution concept for a fixed $\epsilon>0$ below. 
	
	\begin{definition}\label{def:solnPrimal}
		\cite[Definition 3.5]{umer_2022} A nonempty finite set $\bar{\X}\subseteq\mathcal{X}$ is called a \emph{finite $\epsilon$-infimizer} of \eqref{P} if
		$\conv f(\bar{\X})+C+B(0,\epsilon) \supseteq \P$. A finite $\epsilon$-infimizer $\bar{\X}$ of \eqref{P} is called a \emph{finite (weak) $\epsilon$-solution} to \eqref{P} if it consists of only (weak) minimizers.
	\end{definition}
	
	Note that if $\bar{\X}$ is a finite (weak) $\epsilon$-solution, then we have the following inner and outer approximations of the upper image:
	\begin{equation*}
	\conv f(\bar{\X})+C+B(0,\epsilon) \supseteq \P \supseteq \conv f(\bar{\X})+C.
	\end{equation*}
	
	Now, similar to \Cref{def:solnfull}, we provide an exact solution concept for the dual problem \eqref{D}.
	
	\begin{definition}\cite[Definition 2.53, Corollary 2.54]{lohne_2011}\label{def:solnfull2}
		A point $\bar{w} \in \W$ is called a \emph{maximizer} for \eqref{D} if $\xi(\bar{w})$ is a $K$-maximal element of $\xi(\W)$. A nonempty set $\bar{\W} \subseteq \W $ is called a \emph{supremizer} of \eqref{D} if
		$\cone\conv\xi(\bar{\W})-K=\D$.
		A supremizer $\bar{\W}$ of \eqref{D} is called a \emph{solution} to \eqref{D} if it consists of only maximizers.
	\end{definition}
	
	As for the upper image, in general, the lower image cannot be represented by a finite set $\bar{\W}$ using the exact solution concept in \Cref{def:solnfull2}. In the next definition, we propose a novel approximate solution that is tailor-made for the lower image $\D$, which is a convex cone; see \Cref{rem:solnDual} below for the technical motivation.
	
	\begin{definition}\label{def:solnDual} 
		A nonempty finite set $\bar{\W}\subseteq\W\cap \S^{q-1} $ is called a \emph{finite $\epsilon$-supremizer} of \eqref{D} if
		$\cone(\conv \xi(\bar{\W})+\epsilon \{e^{q+1}\} )- K  \supseteq \D$,
		where $\S^{q-1}\coloneqq \{z\in\R^q\mid \norm{z}_\ast=1\}$ denotes the unit sphere in $\R^q$ with respect to the dual norm. A finite $\epsilon$-supremizer $\bar{\W} $ of \eqref{D} is called a \emph{finite $\epsilon$-solution} to \eqref{D} if it consists of only maximizers.
	\end{definition}
	
	If $\bar{\mathcal{W}}$ is a finite $\epsilon$-solution of \eqref{D}, then one obtains the following inner and outer approximations of the lower image:
	\begin{equation*}
	\cone(\conv \xi(\bar{\W})+\epsilon \{e^{q+1}\} )
	- K   \supseteq \D \supseteq  \cone \conv \xi(\bar{\W}) - K.
	\end{equation*}
	
	\begin{remark}\label{rem:solnDual}
		Let us comment on the particular structure of \Cref{def:solnDual}. 
		Since the lower image $\D$ is a convex cone by \Cref{prop:Dcone}, we evaluate the conic hull of the Minkowski sum $\conv \xi(\bar{\W})+\epsilon \{e^{q+1}\}$ so that the error values are scaled properly. With this operation, we ensure that the resulting conic hull is comparable with $\D$ (up to the subtraction of the ordering cone $K$).
	\end{remark}
	
	
	The next proposition will be used later to prove some geometric duality results.
	
	\begin{proposition}\label{prop:WS_Kmax}
		Let $w \in \W\rev{\setminus\{0\}}$. Then, $\xi(w)$ is a $K$-maximal element of $\D$. 
	\end{proposition}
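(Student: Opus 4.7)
The plan is to unfold the definitions of $K$-maximality and of $\D$ and observe that the proof reduces to a single inequality. By \Cref{def:solnfull2} (which references the general definition of $\Max_K$ from \Cref{sec:preliminaries}), to show that $\xi(w)$ is a $K$-maximal element of $\D$ I must verify two things: first, that $\xi(w)\in\D$, and second, that $(\{\xi(w)\}+K\setminus\{0\})\cap \D=\emptyset$.

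The first point is immediate from the definition of $\D$: since $w\in\W$ and $p^w\leq p^w$, the vector $\xi(w)=(w_1,\ldots,w_q,p^w)^\T$ belongs to $\D$. For the second point, I would note that $K\setminus\{0\}=\{\mu e^{q+1}\mid \mu>0\}$, so every element of $\{\xi(w)\}+K\setminus\{0\}$ has the form $(w_1,\ldots,w_q,p^w+\mu)^\T$ with $\mu>0$. If such a point were in $\D$, then by the defining condition of $\D$ (with first $q$ coordinates equal to $w$), we would need $p^w+\mu\leq p^w$, which is impossible since $\mu>0$. Hence the intersection is empty, and $\xi(w)\in \Max_K \D$.

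There is essentially no obstacle here: the statement is a direct consequence of the fact that $p^w$ is, by construction, the supremal last coordinate among all points of $\D$ whose first $q$ coordinates coincide with $w$. The role of the hypothesis $w\neq 0$ is not used in the argument itself (the same reasoning would go through for $w=0$); its purpose is presumably to align the proposition with how it will be invoked later, namely in situations where $\xi(w)$ lies on a nontrivial exposed face of $\D$ rather than at the apex of the cone. I would therefore keep the proof to a short paragraph, explicitly writing out the two bullet points above without any additional machinery.
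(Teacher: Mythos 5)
Your proof is correct and follows essentially the same route as the paper's: both verify directly from the definition of $\D$ that any point $\xi(w)+\mu e^{q+1}$ with $\mu>0$ would force $p^w+\mu\leq p^w$, a contradiction (the paper phrases this via the representation $\D=\xi(\W)-K$ and the identity $\varepsilon=-\bar{\varepsilon}$, which is the same computation). Your side remark that the hypothesis $w\neq 0$ is not actually needed for the argument is also accurate.
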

	\begin{proof}
		Let $\varepsilon>0$. We prove that $\xi(w)+\varepsilon e^{q+1} \notin \D=\xi(\W)-K$. To get a contradiction, assume the existence of $\bar{w} \in \W$ and $\bar{\varepsilon}\geq0$ with $\xi(w)+\varepsilon e^{q+1}=\xi(\bar{w})-\bar{\varepsilon}e^{q+1}$. By 
		\eqref{eq:dualObj}, we have
		$
		(w_1,\dots, w_q, p^w)^{\T} + \varepsilon e^{q+1}
		=(\bar{w}_1, \dots, \bar{w}_q, p^{\bar{w}})^{\T} -\bar{\varepsilon}e^{q+1}$,
		that is, $
		(w_1,\dots, w_q, p^w+\varepsilon )^{\T} 
		=(\bar{w}_1, \dots, \bar{w}_q, p^{\bar{w}}- \bar{\varepsilon})^{\T}$.
		Hence, $\inf_{x\in \X} w^{\T}f(x) = \inf_{x\in \X} \bar{w}^{\T}f(x)$ and $\varepsilon=-\bar{\varepsilon}$. This contradicts $\varepsilon>0$ and $\bar{\varepsilon}\geq0$. Hence, $\xi(w)+\varepsilon e^{q+1} \notin \D$. Since $\varepsilon>0$ is arbitrary, $\xi(w)$ is a $K$-maximal element of $\D$.
	\end{proof}
	
	\section{Geometric duality}\label{sec:geomdual}
	
	In this section, we will investigate the duality relation between the primal and dual problems, \eqref{P} and \eqref{D}. First, we will provide the main duality theorem which relates the weakly $C$-minimal \rev{exposed} faces of $\P$ and the $K$-maximal \rev{exposed} faces of $\D$. Then, we will establish some duality properties regarding $\P$ and $\D$, as well as their polyhedral approximations.
	
	\subsection{Geometric duality between $\P$ and $\D$}\label{subsec:geomdual_1}
	
	We will prove that there is a one-to-one correspondence between the set of all weakly $C$-minimal \rev{exposed} faces of the upper image $\P$ and the set of all $K$-maximal \rev{exposed} faces of the lower image $\D$. We will also show that the upper and lower images can be recovered from each other.
	
	
	Let us start by defining 
	\begin{equation}\label{eq:varphi}
	\varphi\colon \R^q \times \R^{q+1}\to\R,\quad  \varphi(y,w,\alpha)\coloneqq w^{\T}y-\alpha,
	\end{equation}
	and the following set-valued maps: 
	\begin{align}
	&\H, H\colon \R^{q+1}\rightrightarrows \R^q, \H(w,\alpha)\coloneqq \{y \in \R^q\mid \varphi(y,w,\alpha)\geq 0\},  H(w,\alpha)\coloneqq \bd \H(w,\alpha),\notag \\
	&\H^{\ast},H^\ast\colon\R^q\rightrightarrows \R^{q+1}, \H^\ast(y)\coloneqq\{(w^{\T},\alpha)^{\T} \in \R^{q+1}\negthinspace\mid\negthinspace\varphi(y,w,\alpha) \geq 0 \},
	H^\ast(y)\coloneqq \bd \H^\ast(y).\notag 
	\end{align}
	These functions are essential to define a duality map between $\P$ and $\D$. Moreover, for arbitrary $y \in \R^q $ and $(w^{\T},\alpha)^{\T}\in \R^{q+1} $, we have the following statement:
	\begin{equation}\label{eqn:H_H*}
	(w^{\T},\alpha)^{\T}\in H^\ast(y) \iff y\in H(w,\alpha).
	\end{equation}

	With the next proposition we show that, by solving a weighted sum scalarization, one finds supporting hyperplanes to the upper and lower images.
	
	\begin{proposition}\label{prop:supp_halfsp}
		Let $w\in \W\rev{\setminus\{0\}}$ and $x^w$ be an optimal solution to \eqref{WS(w)}. Then, $H(\xi(w))$ is a supporting hyperplane of $\P$ at $f(x^w)$ such that $\P\subseteq \H(\xi(w))$, and $H^\ast(f(x^w))$ is a supporting hyperplane of $\D$ at $\xi(w)$ such that $\D\subseteq \mathcal{H}^\ast(f(x^w))$.
		
	\end{proposition}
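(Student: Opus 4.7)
The proof is essentially a direct unpacking of the definitions of $\varphi$, $\H$, $H$, $\H^\ast$, $H^\ast$, $\xi$, and $\D$, together with the elementary properties of weighted sum scalarization. I would organize it in two symmetric parts.

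For the primal-side claim, the plan is as follows. By optimality of $x^w$ for \eqref{WS(w)}, we have $w^{\T} f(x^w) = p^w$, so $\varphi(f(x^w),\xi(w)) = w^{\T}f(x^w) - p^w = 0$, which gives $f(x^w)\in H(\xi(w))$. To prove the inclusion $\P\subseteq \H(\xi(w))$, I would invoke \Cref{prop:upperimage} to write $\P = f(\X)+C$: for any $y = f(x)+c$ with $x\in\X$ and $c\in C$, since $w\in C^+$ one has $w^{\T}c\ge 0$ and $w^{\T}f(x)\ge p^w$, so $w^{\T}y \ge p^w$, i.e.\ $y\in\H(\xi(w))$. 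Since $f(x^w)\in\P\cap H(\xi(w))$ and $\P\subseteq \H(\xi(w))$, the hyperplane $H(\xi(w))$ supports $\P$ at $f(x^w)$.

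For the dual-side claim, I would first show $\xi(w)\in H^\ast(f(x^w))$. Using again $w^{\T}f(x^w)=p^w$, the definition \eqref{eq:varphi} gives $\varphi(f(x^w),\xi(w)) = w^{\T}f(x^w) - p^w = 0$; equivalently, by \eqref{eqn:H_H*}, $f(x^w)\in H(\xi(w))$ iff $\xi(w)\in H^\ast(f(x^w))$, which I already have. To prove $\D\subseteq \H^\ast(f(x^w))$, I would take an arbitrary $(v^{\T},\alpha)^{\T}\in \D$, so that $v\in\W = C^+$ and $\alpha\le p^v$. By the definition of $p^v$ as an infimum over $\X$, we get $\alpha\le p^v\le v^{\T}f(x^w)$, hence $\varphi(f(x^w),v,\alpha) = v^{\T}f(x^w)-\alpha\ge 0$, i.e.\ $(v^{\T},\alpha)^{\T}\in\H^\ast(f(x^w))$. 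Finally, $\xi(w)\in\D$ follows directly from $p^w\le p^w$, so $H^\ast(f(x^w))$ supports $\D$ at $\xi(w)$.

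There is no real obstacle here: the statement is essentially the geometric manifestation of weak duality for a weighted sum scalarization. The only care needed is in correctly reading the set-valued maps $\H,H,\H^\ast,H^\ast$ and their boundary/interior structure (verifying that equality in $\varphi\ge 0$ corresponds to membership in the boundary hyperplane), together with the use of \Cref{prop:upperimage} so that $\P$ is written without a closure and the pointwise inequality $w^{\T}y\ge p^w$ extends to all of $\P$.
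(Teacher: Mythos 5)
Your proof is correct and follows essentially the same route as the paper's: both parts unpack the definitions of $\varphi$, $H$, $H^\ast$ and use the optimality of $x^w$ together with $\P=f(\X)+C$ from \Cref{prop:upperimage}. Your handling of the dual inclusion is in fact slightly cleaner, since you introduce a fresh symbol $v$ for the arbitrary element of $\D$ and use the chain $\alpha\le p^v\le v^{\T}f(x^w)$, whereas the paper reuses the letter $w$ for both the fixed weight and the arbitrary dual point.
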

	
	\begin{proof}
		To prove the first statement, we show that $f(x^w) \in H(\xi(w)) $ and $\P\subseteq \H(\xi(w))$. As $\xi(w)=(w^{\T},w^{\T}f(x^w))^{\T} $, we have $\varphi(f(x^w),\xi(w)) = 
		0$, which implies that $f(x^w) \in H(\xi(w)) $. To prove $\P\subseteq \H(\xi(w))$, let $x \in \X$, $c \in C$ be arbitrary. Then, $\varphi(f(x)+c,\xi(w))=w^ {\T}f(x)+w^{\T}c-w^{\T}f(x^w)\geq 0$ as $w \in C^+$ and $x^w$ is an optimal solution of \eqref{WS(w)}. Therefore, $\H(\xi(w)) \supseteq  f(\X)+C =\P$, where the equality follows by \Cref{prop:upperimage}. On the other hand, having $\varphi(f(x^w),\xi(w))=w^{\T}f(x^w)-w^{\T}f(x^w)=0$ also implies that $\xi(w) \in H^\ast(f(x^w))$. To complete the proof, it is enough to show that $\D\subseteq \H^\ast(f(x^w))$. Let $(w^{\T},\alpha)^{\T}\in \D$ be arbitrary. It follows from the definition of $\D$ that $\alpha\leq \inf_{x\in \X}w^{\T}f(x)=w^{\T}f(x^w)$. Hence, $\varphi(f(x^w),w,\alpha)=w^{\T}f(x^w)-\alpha\geq 0$, that is, $(w^{\T},\alpha)^{\T}\in \H^\ast(f(x^w))$.
	\end{proof}
	
	The following \rev{propositions} show the relationship between the weakly $C$-minimal elements of $\P$ and the $K$-maximal \rev{exposed} faces of $\D$, as well as that between the $K$-maximal elements of $\D$ and the weakly $C$-minimal \rev{exposed} faces of $\P$. \rev{Their proofs are based on elementary arguments, hence omitted for brevity.}
	
	
	\begin{proposition}\label{lem:duality_1}
		(a) Let $y\in\R^q$. Then, $y$ is a weakly $C$-minimal element of $\P$ if and only if $H^{\ast}(y)\cap\D$ is a $K$-maximal \rev{exposed} face of $\D$.
		(b) For every $K$-maximal \rev{exposed} face $F^\ast$ of $\D$, there exists some $y\in \P$ such that $F^\ast=H^\ast(y)\cap\D$. 
	\end{proposition}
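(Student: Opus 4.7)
The plan is to split the argument along the two parts, invoking the supporting-hyperplane statement of Proposition \ref{prop:supp_halfsp}, the $K$-maximality statement of Proposition \ref{prop:WS_Kmax}, and the dual characterization $\P=\{y\in\R^q\mid w^\T y\geq p^w\text{ for every }w\in C^+\}$, which follows from $\P$ being a closed convex set with $\P+C=\P$.

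For the forward direction of (a), I would start from a weakly $C$-minimal $y\in\P$ and, by a standard separation of $\{y\}-\Int C$ from $\P$, extract some $w\in C^+\setminus\{0\}$ with $w^\T y=p^w$; this produces $\xi(w)\in H^\ast(y)\cap\D$. The inclusion $\D\subseteq\H^\ast(y)$ follows from $y\in\P$, since every $(w'^\T,\alpha)^\T\in\D$ satisfies $\alpha\leq p^{w'}\leq w'^\T y$. Hence $H^\ast(y)$ supports $\D$ and $H^\ast(y)\cap\D$ is an exposed face. For any $(w'^\T,\alpha)^\T$ in this face, the identity $\alpha=w'^\T y$ combined with the same chain of inequalities forces $\alpha=p^{w'}$, so the element equals $\xi(w')$, which is $K$-maximal by Proposition \ref{prop:WS_Kmax} (the case $w'=0$ being checked directly from the structure of $\D$ and $K$).

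For the reverse direction of (a), I would assume $H^\ast(y)\cap\D$ is a $K$-maximal exposed face. The supporting property gives $\D\subseteq\H^\ast(y)$; the opposite orientation is excluded because, for each fixed $w\in C^+$, the cone $\D$ contains elements $(w^\T,\alpha)^\T$ with arbitrarily negative $\alpha$. The inclusion rewrites as $w^\T y\geq p^w$ for every $w\in C^+$, yielding $y\in\P$. A $K$-maximal element $\xi(w)\in H^\ast(y)\cap\D$ with $w\neq 0$ then satisfies $w^\T y=p^w$, and weak minimality of $y$ follows because $w^\T c>0$ for $c\in\Int C$ rules out $y-c\in\P$. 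For (b), let $F^\ast=H\cap\D$ with $H$ a supporting hyperplane; Proposition \ref{prop:Dcone} implies $\D$ is a cone with apex $0$, so $H$ passes through the origin and can be written as $\{(w^\T,\alpha)^\T\mid a^\T w+b\alpha=0\}$. I would rule out $b=0$ by observing that $(0^\T,-1)^\T$ would then lie in $F^\ast$ but fails to be $K$-maximal (since $(0^\T,0)^\T\in\D$). Normalizing to $b=-1$ and setting $y\coloneqq a$, with sign adjusted so that $\D\subseteq\H^\ast(y)$, one obtains $H=H^\ast(y)$; the dual characterization of $\P$ then yields $y\in\P$.

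The main obstacle will be the orientation arguments — both in the reverse direction of (a) and in (b), one must rule out the halfspace opposite to $\H^\ast(y)$ — which rely on the unboundedness of $\D$ in the $-e^{q+1}$ direction stemming from the structure $\D=\xi(\W)-K$. The remaining steps reduce to manipulations of the defining inequalities of $H^\ast$ and $\D$ together with the earlier propositions.
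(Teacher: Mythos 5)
Your argument follows essentially the same route as the paper's appendix proof: a separation/supporting-hyperplane step producing $w\in C^+\setminus\{0\}$ with $w^{\T}y=p^w$ in the forward direction of (a), the unboundedness of $\D$ in the $-e^{q+1}$ direction to fix the orientation of the halfspaces, and the conic structure of $\D$ to force the supporting hyperplane in (b) through the origin with nonzero last normal component, after which $y$ is read off from the normal vector. The remaining differences are cosmetic — you invoke the dual description $\P=\{y\in\R^q\mid w^{\T}y\geq p^w \text{ for all } w\in C^+\}$ where the paper reruns the separation argument, and your weak-minimality step rests on the same implicit assumption as the paper's (that the face contains an element with $w\neq 0$, which fails only for the face $\{(0^{\T},0)^{\T}\}$ that Theorem \ref{thm:duality} later excludes) — so this is essentially the paper's proof.
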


	\begin{proposition}\label{lem:duality_2}
		(a) Let $(w^{\T},\alpha)^{\T}\in\R^{q+1}\rev{\setminus\{0\}}$. Then, $(w^{\T},\alpha)^{\T}$ is a $K$-maximal element of $\D$ if and only if $H(w,\alpha)\cap\P$ is a weakly $C$-minimal \rev{exposed} face of $\P$ satisfying $\H(w,\alpha)\supseteq\P$.
		(b) For every $C$-minimal \rev{exposed} face $F$ of $\P$, there exists some $(w^{\T},\alpha)^{\T}\in \D$ such that $F=H(w,\alpha)\cap\P $. 
	\end{proposition}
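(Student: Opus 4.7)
The plan is to mirror the strategy behind Proposition~\ref{lem:duality_1}, using the fact that every $K$-maximal element of $\D$ is (essentially) the image under $\xi$ of some $w \in \W\setminus\{0\}$, combined with Proposition~\ref{prop:supp_halfsp} and Proposition~\ref{prop:WS_Kmax}. For the forward implication of (a), assume $(w^{\T},\alpha)^{\T}\neq 0$ is $K$-maximal in $\D$. Membership in $\D$ gives $w\in C^+$ and $\alpha\leq p^w$; $K$-maximality rules out strict inequality here (otherwise $(w^{\T},\alpha)^{\T}+\varepsilon e^{q+1}$ would remain in $\D$ for sufficiently small $\varepsilon>0$), so $\alpha = p^w$ and $(w^{\T},\alpha)^{\T} = \xi(w)$. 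Moreover $w\neq 0$, since $w = 0$ would force $\alpha = p^0 = 0$, contradicting $(w^{\T},\alpha)^{\T}\neq 0$. Picking an optimal $x^w$ of \eqref{WS(w)} (which exists by compactness of $\X$ and continuity of $f$), Proposition~\ref{prop:supp_halfsp} yields $\P \subseteq \H(w,\alpha)$ and $f(x^w)\in H(w,\alpha)\cap \P$, so $F \coloneqq H(w,\alpha)\cap \P$ is a nonempty exposed face. To verify weak $C$-minimality, take $y\in F$ and suppose, for contradiction, that some $y'\in \P$ satisfies $y-y'\in \Int C$. Then $w^{\T}(y-y')>0$ (since $w\in C^+\setminus\{0\}$ and $y-y'\in \Int C$), giving $w^{\T}y' < \alpha$; but decomposing $y' = f(x)+c$ with $x\in \X$, $c\in C$ (via Proposition~\ref{prop:upperimage}) yields $w^{\T}y' \geq p^w = \alpha$, a contradiction.

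For the backward implication of (a), assume $F = H(w,\alpha)\cap \P$ is a weakly $C$-minimal exposed face with $\P \subseteq \H(w,\alpha)$. Remark~\ref{rem:w_inC+}, applied with $\rec \P \supseteq C$, gives $w \in C^+$, and the containment yields $\alpha \leq \inf_{y\in \P} w^{\T}y = p^w$. Conversely, any $y \in F$ admits a decomposition $y = f(x)+c$ with $x\in \X$, $c\in C$, and $w^{\T}y = \alpha$ combined with $w^{\T}c\geq 0$ forces $\alpha \geq p^w$; hence $(w^{\T},\alpha)^{\T} = \xi(w)\in \D$. The hypothesis $(w^{\T},\alpha)^{\T}\neq 0$ excludes $w = 0$, and Proposition~\ref{prop:WS_Kmax} then delivers the $K$-maximality. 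For (b), every exposed face $F$ of $\P$ can be written as $F = \tilde{H}\cap \P$ where $\tilde{H} = \{y\in \R^q \mid v^{\T} y = \beta\}$ is a supporting hyperplane with $\P\subseteq \{y\in \R^q \mid v^{\T} y \geq \beta\}$ (flipping signs if necessary), and $v \neq 0$. Remark~\ref{rem:w_inC+} gives $v\in (\rec \P)^+ \subseteq C^+$, and since $\tilde{H}$ meets $\P$, $\beta = \inf_{y\in \P} v^{\T} y = p^v$. Thus $(v^{\T},\beta)^{\T} = \xi(v) \in \D$ and $F = H(v,\beta)\cap \P$; notably, the $C$-minimality hypothesis on $F$ is not actually invoked.

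The main steps reduce to unpacking definitions, which is presumably why the authors omit the proof. The most delicate auxiliary fact is the implication ``$w\in C^+\setminus\{0\}$ and $c\in \Int C$ imply $w^{\T}c>0$,'' used in the forward direction of (a); it follows from the solidity of $C$ by perturbing $c$ slightly in direction $-w$ while remaining inside $C$ and invoking $w\in C^+$. Aside from that, the only subtlety is handling the degenerate case $w = 0$ in (a), which is dispatched immediately by the normalization $(w^{\T},\alpha)^{\T}\neq 0$ together with $p^0 = 0$.
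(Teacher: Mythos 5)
Your argument is correct and follows essentially the same route as the paper's own proof (which is given in the appendix): in the forward direction of (a) one identifies a $K$-maximal point with $\xi(w)$ and invokes \Cref{prop:supp_halfsp}, then checks weak $C$-minimality via $w^{\T}c>0$ for $c\in\Int C$; in the backward direction one uses \Cref{rem:w_inC+} to get $w\in C^+$ and the decomposition $y=f(x)+c$ to pin down $\alpha=p^w$; part (b) is the supporting-hyperplane argument, and, as you observe, the $C$-minimality hypothesis is not actually needed there.

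One small slip in the backward direction of (a): you assert that the hypothesis $(w^{\T},\alpha)^{\T}\neq 0$ excludes $w=0$, but it does not (take $w=0$, $\alpha=-1$). The case $w=0$ is instead excluded by the hypothesis that $H(w,\alpha)\cap\P$ is a weakly $C$-minimal \emph{exposed} face: for $w=0$ the set $\H(0,\alpha)$ is either $\R^q$ or $\emptyset$, so $H(0,\alpha)=\bd\H(0,\alpha)=\emptyset$ and $H(0,\alpha)\cap\P=\emptyset$ cannot be an exposed face. This is needed because your appeal to \Cref{prop:WS_Kmax} requires $w\in\W\setminus\{0\}$. With that repair the proof is complete.
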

	
	We now proceed with the main geometric duality result. Let $\mathcal{F}_{\P} $ be the set of all weakly $C$-minimal \rev{exposed} faces of $\P$ and $\mathcal{F}_{\D}^\ast$ be the set of all $K$-maximal \rev{exposed} faces of $\D$. Consider the set-valued function
	\begin{equation}\label{eqn:Psi}
	\Psi \colon \mathcal{F}_{\D}^\ast \rightrightarrows \R^q, \quad  \Psi(F^\ast)\coloneqq\bigcap\limits_{(w^{\T},\alpha)^{\T}\in F^\ast} H(w,\alpha)\cap\P.
	\end{equation}
	
	\begin{theorem}\label{thm:duality}
		$\Psi$ is an inclusion-reversing one-to-one correspondence between \rev{$\mathcal{F}^\ast_{\D}\setminus\{\{(0^\T,0)^\T\}\}$} and $\mathcal{F}_{\P}$. The inverse map is given by 
		$\Psi^\ast(F)\coloneqq\bigcap_{y\in F} H^\ast(y)\cap\D$.
	\end{theorem}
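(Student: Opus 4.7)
The plan is to reduce everything to Propositions \ref{lem:duality_1} and \ref{lem:duality_2}, tied together by the biconditional \eqref{eqn:H_H*} which lets us swap primal points for dual supporting hyperplanes and vice versa. I would split the argument into four tasks: (i) $\Psi^\ast$ takes each $F\in\mathcal{F}_{\P}$ to an element of $\mathcal{F}^\ast_{\D}\setminus\{\{(0^\T,0)^\T\}\}$; (ii) $\Psi$ takes each $F^\ast\in\mathcal{F}^\ast_{\D}\setminus\{\{(0^\T,0)^\T\}\}$ to an element of $\mathcal{F}_{\P}$; (iii) $\Psi\circ\Psi^\ast$ and $\Psi^\ast\circ\Psi$ are identity maps; and (iv) both maps are inclusion-reversing.

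For well-definedness of $\Psi^\ast$, I would fix $F\in\mathcal{F}_{\P}$. Since $F$ is a proper exposed face of $\P$, there is a supporting hyperplane $H(w_0,\alpha_0)$ with $F=H(w_0,\alpha_0)\cap\P$ and $\P\subseteq\H(w_0,\alpha_0)$; in particular $w_0\neq 0$, and Remark \ref{rem:w_inC+} applied to $\rec\P=C$ (using compactness of $f(\X)$) forces $w_0\in C^+=\W$, after which $\alpha_0=p^{w_0}$, i.e.\ $(w_0,\alpha_0)=\xi(w_0)\in\D\setminus\{0\}$. By \eqref{eqn:H_H*}, $(w_0,\alpha_0)\in H^\ast(y)$ for every $y\in F$, so $(w_0,\alpha_0)\in\Psi^\ast(F)$ and in particular $\Psi^\ast(F)\neq\{(0^\T,0)^\T\}$. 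Since $H^\ast(y)$ is affine in $y$, the intersection over $y\in F$ coincides with the intersection over any finite affinely spanning subset $\{y_1,\ldots,y_k\}\subseteq F$; each $H^\ast(y_i)\cap\D$ is a $K$-maximal exposed face by Proposition \ref{lem:duality_1}(a), and iterating the standard fact that the intersection of two exposed faces of a closed convex set is itself exposed (add the corresponding supporting functionals) yields that $\Psi^\ast(F)$ is an exposed face of $\D$; it consists only of $K$-maximal elements because it is contained in $H^\ast(y_1)\cap\D$. Well-definedness of $\Psi$ on $\mathcal{F}^\ast_{\D}\setminus\{\{(0^\T,0)^\T\}\}$ is handled symmetrically via Proposition \ref{lem:duality_2}(a), using that such an $F^\ast$ contains at least one nonzero element to drive the construction.

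For the mutual-inverse property, the inclusion $F\subseteq\Psi(\Psi^\ast(F))$ is immediate from \eqref{eqn:H_H*}; for the converse, any $z\in\Psi(\Psi^\ast(F))\subseteq\P$ satisfies $z\in H(w_0,\alpha_0)$ for the witness $(w_0,\alpha_0)\in\Psi^\ast(F)$ constructed above, hence $z\in H(w_0,\alpha_0)\cap\P=F$. For $\Psi^\ast\circ\Psi(F^\ast)=F^\ast$, Proposition \ref{lem:duality_1}(b) produces $y_0\in\P$ with $F^\ast=H^\ast(y_0)\cap\D$; from $F^\ast\subseteq H^\ast(y_0)$ and \eqref{eqn:H_H*} we obtain $y_0\in H(w,\alpha)$ for every nonzero $(w,\alpha)\in F^\ast$, so $y_0\in\Psi(F^\ast)$, which forces $\Psi^\ast(\Psi(F^\ast))\subseteq H^\ast(y_0)\cap\D=F^\ast$; the reverse inclusion is again \eqref{eqn:H_H*}. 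Inclusion reversal is then a one-liner: a larger indexing set produces a smaller intersection.

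The main obstacle is the technical verification that $\Psi^\ast(F)$ and $\Psi(F^\ast)$ are genuine exposed faces rather than merely faces. This is resolved by (a) reducing possibly-infinite intersections to finite ones via affine-hull considerations on the primal side, and (b) applying the intersection-of-two-exposed-faces-is-exposed lemma, which iterates safely because $\P$ and $\D$ have nonempty interior so that sums of supporting functionals are non-degenerate. A secondary subtlety is the exclusion of $\{(0^\T,0)^\T\}$ from the domain of $\Psi$: this face would degenerately map to $\P$ (or to $\emptyset$, depending on the convention for $H(0,0)$), neither of which is a weakly $C$-minimal exposed face of $\P$, so the exclusion is necessary; conversely, the explicit nonzero witness produced in task (i) guarantees that $\{(0^\T,0)^\T\}$ never appears as a value of $\Psi^\ast$.
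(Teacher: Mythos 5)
Your argument is correct, and it reproduces, essentially step for step, the alternative proof that the paper records in its appendix; however, it is genuinely different from the proof given in the body of the paper. There, the theorem is deduced from the general conjugate-epigraph duality of \cite[Theorem 3.3]{heyde_2013}: one writes $\D=-\epi g^\ast$ and $\P\times\R_+=\epi g$ for $g=I_{\P}$, imports the inclusion-reversing bijection $\tilde{\Psi}$ between the $K$-minimal exposed faces of the two epigraphs, and spends the effort on the bookkeeping (the projection $P$ and its inverse $G$, normal cones, sign flips, and the degenerate pair $\{(0^\T,0)^\T\}\leftrightarrow\P$) needed to identify $\tilde{\Psi}$ with $\Psi$ and $\tilde{\Psi}^{-1}$ with $\Psi^\ast$. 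That route obtains exposedness and bijectivity for free from the cited theorem; yours must prove them by hand, which you do via the explicit witnesses, the equivalence \eqref{eqn:H_H*}, and the reduction of the defining intersections to finite intersections of exposed faces --- on this last point you are in fact more careful than the appendix version, which asserts the closure of ``proper faces'' under nonempty intersection without comment. The one soft spot, shared with the paper, is the convention for $H(0,0)$: since $\D$ is a cone, every $F^\ast\in\mathcal{F}^\ast_{\D}$ contains the origin, so the intersection defining $\Psi(F^\ast)$ always includes the term indexed by $(0^\T,0)^\T$, and one must read $H(0,0)$ as the vacuous constraint $\R^q$ rather than as $\bd\H(0,0)=\emptyset$; you flag this explicitly, and the paper adopts the same reading implicitly.
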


	\begin{proof}
		We use the geometric duality theory for the epigraphs of closed convex functions developed in \cite{heyde_2013}. To be able to use this theory, we express $\P$ and $\D$ as the epigraphs of closed convex functions, up to transformations. Observe that
		\[
		\D\negthinspace=\negthinspace\{(w^\T,\alpha)^\T\in \R^{q+1}\negthinspace \mid p^w\geq \alpha, w\in C^+\}\negthinspace =\negthinspace -\{(w^\T,\alpha)^\T\in \R^{q+1}\negthinspace \mid \tilde{p}(w)\leq \alpha\}\negthinspace =\negthinspace -\epi \tilde{p},
		\]
		where $\tilde{p}\colon \R^{q}\to \R\cup\{+\infty\}$ is 
		the support function of $\P$, i.e., $\tilde{p}(w)=\sup_{y\in\P}w^\T y$ for every $w\in\R^q$. Then, the well-known duality between support and indicator functions (\cite[Theorem 13.2]{rockafellar_1970}) yields that $\tilde{p}=g^\ast$, where $g\coloneqq I_{\P}$ is the indicator function of $\P$. We also have $\epi g = \P\times\R_+$ and \Cag{$\P=P(\epi g)$, where $P\colon 2^{\R^{q+1}}\rightrightarrows\R^q$ is defined by $P(F^\ast)=\{y\in\R^{q}\mid (y^\T,0)^\T\in F^\ast\}$ for $F^\ast\subseteq \R^{q+1}$.} Let us define a set-valued function $\tilde{\Psi}\colon 2^{\R^{q+1}}\rightrightarrows \R^{q+1}$ by
		\[
		\tilde{\Psi}(F^\ast)\coloneqq\bigcap_{(w^\T,g^\ast(w))^\T\in F^\ast}\{(y^\T,g(y))^\T\in \R^q\times \R\mid w\in \partial g(y)\}.
		\]
		Then, by \cite[Theorem 3.3]{heyde_2013}, $\tilde{\Psi}$ is an inclusion-reversing one-to-one correspondence between the set of all $K$-minimal exposed faces of $\epi g^\ast = -\D$ and the set of all $K$-minimal exposed faces of $\epi g = \P\times\R_+$. Clearly, a $K$-minimal exposed face of $-\D$ is of the form $-F^\ast$, where $F^\ast\in \mathcal{F}_{\D}^\ast$; the converse holds as well. The one-to-one correspondence here is inclusion-preserving.

		Next, we show that \Cag{$P(\tilde{F})\in \mathcal{F}_\P\cup \{\P\}$} whenever $\tilde{F}$ is a $K$-minimal exposed face of $\epi g$. Note that a point $(y^\T,r)^\T\in\R^{q+1}$ is a $K$-minimal element of $\epi g=\P\times\R_+$ if and only if $y\in\P$ and $r=0$. Let $\tilde{F}$ be a $K$-minimal exposed face of $\epi g$. The previous observation implies that $\tilde{F}\subseteq \P\times\{0\}$. By definition, we may write $\tilde{F}=(\P\times\R_+)\cap \tilde{H}$ for some supporting hyperplane $\tilde{H}\subseteq \R^{q+1}$ of $\P\times\R_+$. Then, \Cag{$P(\tilde{F})=P(\P\times\R_+)\cap P(\tilde{H})=\P\cap P(\tilde{H})$.} By \cite[Lemma 3.1(ii)]{heyde_2013}, there exist $w\in \R^q$ and $\alpha\in\R$ such that $\tilde{H}=\{(y^\T,r)\in\R^{q+1}\mid w^\T y - r = \alpha\}$. If $w=0$, then $\tilde{F}\subseteq \P\times\{0\}$ implies that $\alpha=0$ and we have $\tilde{H}=\R^q\times\{0\}$ and \Cag{$P(\tilde{F})=\P$.} Suppose that $w\neq 0$. \Cag{Then, $P(\tilde{H})\neq \R^q$ is a supporting hyperplane of $P(\tilde{F})$ since $\tilde{H}$ is a supporting hyperplane of $\tilde{F}$. It follows that $P(\tilde{F})=\P\cap P(\tilde{H})\in\mathcal{F}_\P$.}
		
		
		\Fir{Conversely, we define a function $G$ on $2^{\R^q}$ which maps $\mathcal{F}_\P\cup\{\P\}$ into the set of all $K$-minimal exposed faces of $\epi g$. 
			Let $G(F)\coloneqq F \times \{0\}$ for every $F\subseteq\R^q$. By the definitions of $P$ and $G$, we obtain $P(G(F)) = F$ for every $F\subseteq \R^q$ and $G(P(\tilde{F})) = \tilde{F}$ for every $\tilde{F} \subseteq \R^q \times \{0\}$. In particular, $P(G(F))=F$ for every $F\in \mathcal{F}_\P\cup\{\P\}$ and $G(P(\tilde{F}))=\tilde{F}$ for every $K$-minimal exposed face $\tilde{F}$ of $\epi g$. It remains to show that $G(F)$ is a $K$-minimal exposed face of $\epi g$ for all $F \in \mathcal{F}_{\P}\cup \{\P\}$. First, note that $G(\P) = \P \times\{0\} = \epi g \cap (\R^q \times \{0\})$ is a $K$-minimal exposed face of $\epi g$. Next, suppose that $F\in\mathcal{F}_\P$. Then, by \Cref{lem:duality_2}(b), there exists at least one $(w^\T,\alpha)^\T\in\D$ such that $F=H(w,\alpha)\cap \P$. Moreover, since $H(w,\alpha)$ is a supporting hyperplane of $\P$, it follows that $\tilde{H}(w,\alpha)\coloneqq \{(y^\T,r)^\T\in\R^{q+1} \mid w^\T y - r =\alpha\}$ is a supporting hyperplane of $\epi g$ and $\tilde{F}\coloneqq \tilde{H}(w,\alpha)\cap \epi g$ is a $K$-minimal exposed face of $\epi g$ by \cite[Lemma 3.1(ii)]{heyde_2013}.	
			Note that $P(\tilde{F}) = P(\tilde{H}(w,\alpha))\cap P(\epi g) = H(w,\alpha) \cap \P = F$. Hence, $G(F) = G(P(\tilde{F})) = \tilde{F}$ is a $K$-minimal exposed face of $\epi g$.}

		Let $F^\ast\in \mathcal{F}^\ast_\D$. Then, $-F^\ast$ is a $K$-minimal exposed face of $\epi g^\ast$, $\tilde{\Psi}(-F^\ast)$ is a $K$-minimal exposed face of $\epi g$, and \Cag{$P(\tilde{\Psi}(-F^\ast))\in \mathcal{F}_\P\cup\{\P\}$. Moreover, we have
			\begin{align*}		
			P(\tilde{\Psi}(-F^\ast))&=\bigcap_{(w^\T,g^\ast(w))^\T \in -F^\ast} P(\{(y^\T,g(y))^\T\in\R^{q+1}\mid w\in \partial g(y)\})\\
			&=\bigcap_{(-w^\T,-\tilde{p}(w))^\T \in F^\ast} P(\{(y^\T,0)^\T\in\R^{q+1}\mid y\in\P, w\in \mathcal{N}_\P(y)\})\\
			&=\bigcap_{(w^\T,p^w)^\T \in F^\ast}\{y\in\P\mid w\in -\mathcal{N}_\P(y)\}\\
			&=\bigcap_{(w^\T,\alpha)^\T \in F^\ast} H(w,\alpha)\cap \P
			= \Psi(F^\ast).
			\end{align*}
		}
		
		By \cite[Theorem 3.3]{heyde_2013}, the inverse of $\tilde{\Psi}$ is given by
		\[
		\tilde{\Psi}^{-1}(\tilde{F})=\bigcap_{(y^\T,g(y))^\T\in \tilde{F}}\{(w^\T,g^\ast(w))^\T\in\R^{q}\times\R\mid w\in \partial g(y)\}.
		\]
		Let $F\in \mathcal{F}_\P\cup\{\P\}$. Then, $-\tilde{\Psi}^{-1}(G(F))\in \mathcal{F}^\ast_\D$ by the above constructions. Moreover, we have 
		\begin{align}
		-\tilde{\Psi}^{-1}(G(F))&=\bigcap_{(y^\T,g(y))^\T\in G(F)}\{(-w^\T,-g^\ast(w))^\T\in\R^{q}\times\R\mid w\in \partial g(y)\}\notag \\
		&=\Fir{\bigcap_{(y^\T,0)^\T\in F\times\{0\}}\{(-w^\T,-\tilde{p}(w))^\T\in\R^{q}\times\R\mid w\in\mathcal{N}_\P(y)\}}\notag \\
		&=\bigcap_{y\in F} \{(w^\T,p^w)^\T\in\R^{q}\times\R\mid w\in - \mathcal{N}_\P(y)\}\label{psiFast3}\\
		&=\bigcap_{y\in F} \{(w^\T,p^w)^\T\in\R^{q}\times\R\mid y\in H(w,p^w)\}\notag\\
		&=\bigcap_{y\in F} \{(w^\T,p^w)^\T\in\R^{q}\times\R\mid (w^\T,p^w)^\T\in H^\ast(y)\}\notag\\
		&=\bigcap_{y\in F} H^\ast(y)\cap \D = \Psi^\ast(F).\notag
		\end{align}
		\rev{Finally, note that $\Psi (\{(0^\T,0)^\T\})=\P$. Then, by combining the three one-to-one correspondences established above and excluding the pair formed by $\{(0^\T,0)^\T\}$ and $\P$, we conclude that $\Psi$ is an inclusion-reversing one-to-one correspondence between $\mathcal{F}_{\D}^\ast\setminus \{\{(0^\T,0)^\T\}\}$ and $\mathcal{F}_\P$, and $\Psi^\ast$ is its inverse mapping.}
	\end{proof}

\begin{remark}\label{rem:indicatrix}
	\rev{Using the notions of second-order subdifferential and indicatrix for convex functions, one can obtain the following polarity relationship between the curvatures of $\P$ and $\D$, which is in the spirit of \cite[Theorem 5.7]{heyde_2013}. Suppose that the function $g=I_{\P}$ (see the proof of \Cref{thm:duality}) is twice epi-differentiable (\cite[Definition 2.2]{rockafellar-epi}). Then, for every $K$-maximal exposed face $F^\ast$ of $\D$, $(w^\T,p^w)^\T\in F^\ast$, $y\in \Psi(F^\ast)$, we have
		\[
		\ind_\D((w^\T,p^w)\mid (y^\T,1))\coloneqq \ind g^\ast(-w\mid y)=(\ind g(y\mid -w))^\circ \eqqcolon (\ind_\P(y\mid-w))^\circ.
		\]
		Here, $\ind g, \ind g^\ast$ are defined as the polars of the corresponding second-order subdifferentials of $g, g^\ast$, respectively; see \cite[Section 4.1]{heyde_2013}, \cite[Proposition 4.1]{nd}, \cite[Section 4]{seeger}. Moreover, we define $\ind_\P, \ind_\D$ as indicatrices of $\P,\D$ (with suitable dimensions) by using $\ind g, \ind g^\ast$, similar to the construction in \cite[Section 5]{heyde_2013}. The result is a direct consequence of \cite[Lemma 4.6(b)]{seeger}, where it is needed to work with $(w,y)$ such that $-w\in \partial g (y)$. In our case, this condition is verified thanks to the structure of $\Psi(F^\ast)$ in \eqref{psiFast} whenever $(w^\T,p^w)^\T\in F^\ast$, $y\in \Psi(F^\ast)$. We verify the twice epi-differentiability of $g$ for the problem that will be considered in \Cref{example:3}. To that end, let us take $f(x)=A^\T x$ and $\X=\{x\in \R^n\mid x^{\T}Px-1\leq0\}$, where $P \in \R^{n\times n}$ is a symmetric positive definite matrix, and $A\in\R_+^{n\times q}$. By \cite[Lemma 4.6(a)]{seeger}, $g$ is twice epi-differentiable at $y$ relative to $-w$ if and only if $g^\ast$ is so at $-w$ relative to $y$. To check the latter, note that, for $w\in C^+$, we have
		\begin{align}
		g^\ast(-w)&=\sup_{z\in\P}-w^\T z = \sup_{x\in\X}-w^\T f(x)=\sup\{-w^\T A^\T x\mid x^\T P x \leq 1\} \notag\\
		& = \sqrt{(Aw)^\T P^{-1} Aw} = \sqrt{w^\T(A^\T P^{-1}A)w}, \notag
		\end{align}
		which follows from standard calculations for finding the support function of an ellipsoid. It follows that $(g^\ast)^2$ on $\R^q$ is a piecewise linear-quadratic function in the sense of \cite[Definition 1.1]{rockafellar-epi}, which is twice epi-differentiable by \cite[Theorem 3.1]{rockafellar-epi}. Hence, $g^\ast$ is also twice epi-differentiable.
	}
\end{remark}

Now, we will see that the upper and dual images $\P$ and $\D$ can be recovered from each other using the function $\varphi$ introduced in \eqref{eq:varphi}. The following definition will be used to simplify the notation in later steps.
\begin{definition}\label{def:PD-DP}
	For closed and convex sets $\bar{\P}$ and $\bar{\D}$, we define
	\begin{align}\label{eqn:6}
	&\D_{\bar{\P}}\coloneqq\{(w^{\T},\alpha)^{\T} \in \R^{q+1} \mid \forall y \in \bar{\P} \colon \varphi(y,w,\alpha) \geq 0\}, \notag \\
	&\P_{\bar{\D}}\coloneqq\{y \in \R^q \mid \forall (w^{\T},\alpha)^{\T} \in \bar{\D} \colon \varphi(y,w,\alpha)\geq 0\}. \notag 
	\end{align}
\end{definition}
\begin{remark}\label{rem:PDdualcone}
	\rev{In view of \eqref{eq:varphi}, the sets in \Cref{def:PD-DP} can be rewritten as
		\begin{align}
		&\D_{\bar{\P}}=\{z\in \R^{q+1} \mid \forall y \in \bar{\P} \colon (y^{\T},-1)z\geq 0\}=(\bar{\P}\times\{-1\})^+, \notag \\
		&\P_{\bar{\D}}=\{y\in\R^q\mid \forall z\in \bar{\D}\colon (y^{\T},-1)z\geq 0\}=\{y\in\R^q\mid  (y^{\T},-1)^{\mathsf{T}}\in \bar{\D}^+\}. \notag 
		\end{align}}
\end{remark}
\begin{proposition}\label{prop:DPequalsD}
	It holds (a) $\D_{\P}=\D$, (b) $\P_{\D}=\P$.
\end{proposition}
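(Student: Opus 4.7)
The plan is to prove both set equalities by double inclusion, exploiting the structural description $\P=f(\X)+C$ from \Cref{prop:upperimage} (so $\rec\P=C$ since $f(\X)$ is compact) and the definition $p^w=\inf_{x\in\X}w^\T f(x)$.

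For part (a), I would first rewrite $(w^\T,\alpha)^\T\in\D_\P$ as the single inequality $\alpha\leq \inf_{y\in\P}w^\T y$. Since $\rec\P=C$, \Cref{rem:w_inC+} implies that for this infimum to be finite we must have $w\in C^+$. Conversely, when $w\in C^+$, splitting $y=f(x)+c$ with $x\in\X$ and $c\in C$ gives
\[
\inf_{y\in\P}w^\T y = \inf_{x\in\X}w^\T f(x)+\inf_{c\in C}w^\T c = p^w+0 = p^w.
\]
Thus $\D_\P=\{(w^\T,\alpha)^\T\mid w\in C^+,\ \alpha\leq p^w\}=\D$.

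For part (b), the inclusion $\P\subseteq\P_\D$ follows directly from \Cref{prop:supp_halfsp}: for every $w\in C^+\setminus\{0\}$ with optimal solution $x^w$ of \eqref{WS(w)}, we have $\P\subseteq\H(\xi(w))$, i.e., $w^\T y\geq p^w$ for every $y\in\P$; the case $w=0$ and $\alpha\leq 0$ is immediate. By the description of $\D$ obtained in part (a), this yields $\varphi(y,w,\alpha)\geq 0$ for every $(w^\T,\alpha)^\T\in\D$, so $y\in\P_\D$.

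The reverse inclusion $\P_\D\subseteq\P$ is the main technical step and I would handle it by contrapositive via a separating hyperplane. If $y\notin\P$, then, since $\P$ is closed and convex and $\{y\}$ is compact, strict separation yields $w\in\R^q\setminus\{0\}$ and $r\in\R$ with $w^\T y<r$ and $w^\T z\geq r$ for all $z\in\P$. The boundedness of $\inf_{z\in\P}w^\T z$ together with $\rec\P=C$ forces $w\in C^+$ by \Cref{rem:w_inC+}. Then, using the identification from part (a), $p^w=\inf_{z\in\P}w^\T z\geq r>w^\T y$, so $(w^\T,p^w)^\T\in\D$ witnesses $\varphi(y,w,p^w)<0$, i.e.\ $y\notin\P_\D$. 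The only delicate point is ensuring the separator lies in $C^+$, which is exactly the role of \Cref{rem:w_inC+}; beyond that, the argument is routine.
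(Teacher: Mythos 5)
Your proof is correct and follows essentially the same route as the paper: part (a) and the inclusion $\P\subseteq\P_\D$ are handled in the same way, and your reverse inclusion $\P_\D\subseteq\P$ is just the contrapositive of the paper's argument, which passes through the dual-cone identity of \Cref{rem:PDdualcone} to reduce to the statement that $w^\T y\geq p^w$ for all $w\in C^+\setminus\{0\}$ forces $y\in\P$ --- a fact whose justification is exactly the strict separation you carry out explicitly. There are no gaps; in particular, your appeal to \Cref{rem:w_inC+} (via $\P=\P+C$) to place the separating normal in $C^+$ is precisely the step the paper relies on as well.
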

\begin{proof}
	(a) follows directly from the definitions, \Cref{rem:w_inC+} and \Cref{prop:upperimage}.
	To prove (b), let $y\in\P$ be arbitrary. By \Cref{prop:upperimage}, $y= f(x)+c \in \P$ for some $x\in\X, c\in C$. For every $(w^\T,\alpha)^\T\in\D$, we have $\varphi(f(x)+c,w,\alpha)
	\geq 0$, which follows from $w\in C^+$ and $(w^\T,\alpha)^\T\in\D$. This implies
	$f(x)+c \in \P_{\D}$, hence, $\P_{\D}\supseteq\P$. 
	\rev{For the reverse inclusion, let $y\in \P_{\D}$. By \Cref{rem:PDdualcone} and the definition of $\D$, we have $(y^{\T},-1)^{\T}\in \D^+=(\xi(\W)-K)^+$. Hence, for every $w\in \W$ and $\alpha\leq p^w$, we have $(y^{\T},-1)(w^{\T},\alpha)^{\T}=w^{\T}y-\alpha \geq 0$. In particular, by choosing $w\in C^+\setminus \{0\}$ and setting $\alpha=p^w$, we get $w^{\T}y\geq p^w$ for every $w\in C^+\setminus\{0\}$. Note that $p^w=\inf_{x\in \X}w^{\T}f(x)=\inf_{y\in\P}w^{\T}y$ for every $w\in C^+\setminus\{0\}$. Since $\P$ is a nonempty closed convex set such that $\P=\P+C$, having $w^{\T}y\geq p^w$ for every $w\in C^+\setminus\{0\}$ is equivalent to $y\in \P$. Hence, $\P_\D\subseteq\P$.}
\end{proof}

	\subsection{Geometric duality between the approximations of $\P$ and $\D$} \label{subsec:geomdual_2}
	
	In \Cref{thm:duality}, we have seen that the upper and lower images can be recovered from each other. In this section, we show that similar relations hold for polyhedral approximations of these sets. Throughout, we call a set $A\subseteq \R^q$ ($A\subseteq \R^{q+1}$) an \emph{upper set} (a \emph{lower set}) if $A=A+C$ ($A = A-K$). 
	We start by showing that a closed convex upper set can be recovered using the transformations introduced in \Cref{def:PD-DP}.
	
	\begin{proposition}\label{prop:7}
		Let $\emptyset \neq \bar{\P} \subsetneq \R^q $ be a closed convex set. Then, $\bar{\P}=\P_{\mathcal{D}_{\bar{\P} }} $.
	\end{proposition}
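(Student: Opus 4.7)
The plan is to prove the two inclusions separately. The inclusion $\bar{\P}\subseteq \P_{\D_{\bar{\P}}}$ is immediate from the definitions: for any $y\in \bar{\P}$ and any $(w^\T,\alpha)^\T \in \D_{\bar{\P}}$, the defining condition of $\D_{\bar{\P}}$ gives $\varphi(y,w,\alpha)\geq 0$, so $y\in \P_{\D_{\bar{\P}}}$. This requires no assumption beyond closedness and convexity of $\bar{\P}$.

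The substantive direction is $\P_{\D_{\bar{\P}}}\subseteq \bar{\P}$, which I would prove by contrapositive using a hyperplane separation argument. Suppose $y\in \R^q\setminus \bar{\P}$. Since $\bar{\P}$ is a nonempty closed convex proper subset of $\R^q$, the proper separation theorem (e.g.\ \cite[Theorem 11.3]{rockafellar_1970}) yields some $w\in \R^q\setminus\{0\}$ and $\alpha\in\R$ such that $w^\T z\geq \alpha$ for every $z\in \bar{\P}$ while $w^\T y<\alpha$. The first inequality translates to $\varphi(z,w,\alpha)\geq 0$ for every $z\in\bar{\P}$, so $(w^\T,\alpha)^\T \in \D_{\bar{\P}}$; the second inequality states $\varphi(y,w,\alpha)<0$, witnessing $y\notin \P_{\D_{\bar{\P}}}$. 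Combining the two inclusions yields $\bar{\P}=\P_{\D_{\bar{\P}}}$.

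There is no real obstacle here; the result is essentially a bipolar-type statement phrased through the map $\varphi$, and the assumption $\bar{\P}\subsetneq\R^q$ is used only to guarantee the applicability of the separation theorem (the case $\bar{\P}=\R^q$, although excluded, would also give equality trivially since then $\D_{\bar{\P}}=\{0\}\times (-\infty,0]$ and $\P_{\D_{\bar{\P}}}=\R^q$). The only minor point to double-check is that the separating hyperplane can be taken with $w\neq 0$, which is automatic for separation of a point from a nonempty closed convex set. Note also that, in view of \Cref{rem:PDdualcone}, this statement is essentially the identity $((\bar{\P}\times\{-1\})^+)^{\circ_{q}}=\bar{\P}$, where the outer operation recovers $\bar{\P}$ from the bipolar by intersecting with the hyperplane of last coordinate $-1$; this perspective could alternatively be invoked to obtain the result from bipolar duality, but the direct separation argument above is the cleanest route.
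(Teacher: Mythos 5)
Your proof is correct, but it takes a genuinely different route from the paper. The paper proves this via the dual-cone formalism of \Cref{rem:PDdualcone}: it rewrites $y\in\P_{\D_{\bar{\P}}}$ as $(y^{\T},-1)^{\T}\in(\bar{\P}\times\{-1\})^{++}=\cl\cone(\bar{\P}\times\{-1\})$ and then invokes \cite[Theorem 8.2]{rockafellar_1970} to identify this closure as $\cone(\bar{\P}\times\{-1\})\cup(\rec\bar{\P}\times\{0\})$, so that a point with last coordinate $-1$ must lie in $\cone(\bar{\P}\times\{-1\})$ with unit scaling, i.e., in $\bar{\P}$. You instead split the equality into two inclusions and handle the nontrivial one by directly separating a point $y\notin\bar{\P}$ from $\bar{\P}$, exhibiting the separating functional as an explicit element of $\D_{\bar{\P}}$ that excludes $y$ from $\P_{\D_{\bar{\P}}}$. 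Your argument is more elementary and self-contained (it needs no structure theorem for the closure of a conic hull), whereas the paper's argument buys consistency with the bipolar machinery it sets up in \Cref{rem:PDdualcone} and reuses in \Cref{prop:DPD=D}. The only cosmetic issue is the citation: separating a point from a nonempty closed convex set with a strict inequality is strong separation (\cite[Corollary 11.4.2]{rockafellar_1970}) rather than proper separation (\cite[Theorem 11.3]{rockafellar_1970}); the former is what your argument actually uses, and it does hold here, so there is no gap.
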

	\begin{proof}
		\rev{Let $y\in\R^q$. \Cref{rem:PDdualcone} implies that $y\in \P_{\D_{\bar{\P}}}$ is equivalent to $(y^{\T},-1)^{\T}\in (\D_{\bar{\P}})^+=(\bar{\P}\times\{-1\})^{++}$. By the convexity of $\bar{\P}$ and \cite[Theorem 8.2]{rockafellar_1970}, we have
			\[
			(\bar{\P}\times\{-1\})^{++}=\cl\cone (\bar{\P}\times\{-1\})=\cone (\bar{\P}\times\{-1\})\cup (\rec \bar{\P}\times\{0\}).
			\]
			Hence, $(y^{\T},-1)^{\T}\in (\bar{\P}\times\{-1\})^{++}$ is equivalent to $(y^{\T},-1)^{\T}\in \cone(\bar{\P}\times\{-1\})$, which is equivalent to $y\in \bar{\P}$. Therefore, $\P_{\D_{\bar{\P}}}=\bar{\P}$.
		}
	\end{proof}

	Next, for a closed convex lower set $\bar{\D} $, we want to investigate the relationship between $\mathcal{\bar{D}} $ and $\mathcal{D}_{\mathcal{P}_{\mathcal{\bar{D}}}} $. While the equality of these sets may not hold in general, \rev{the next proposition shows that it holds if $\bar{\D}$ is a cone.} 
	\rev{\begin{proposition}\label{prop:DPD=D}
			Let $\emptyset\neq \bar{\D}\subseteq\R^{q+1}$ be a closed convex lower set. Suppose further that $\bar{\D}$ is a cone and $\P_{\bar{\D}}\neq\emptyset$. Then, $\D_{\P_{\bar{\D}}} = \D$.
		\end{proposition}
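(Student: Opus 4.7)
The plan is to reduce the desired equality to a single dual-cone identity and then invoke the bipolar theorem. By \Cref{rem:PDdualcone}, we have the identifications
\[
\D_{\P_{\bar{\D}}}=(\P_{\bar{\D}}\times\{-1\})^+ \quad \text{and} \quad \P_{\bar{\D}}=\{y\in\R^q\mid (y^\T,-1)^\T\in \bar{\D}^+\}.
\]
Since $\bar{\D}$ is a nonempty closed convex cone, the bipolar theorem gives $\bar{\D}=\bar{\D}^{++}$. Therefore the proposition reduces to establishing
\[
\bar{\D}^+ = \cl\cone(\P_{\bar{\D}}\times\{-1\}),
\]
because taking dual cones of both sides and using the standard identity $(\cl\cone A)^+=A^+$ then yields $\bar{\D}=\bar{\D}^{++}=(\P_{\bar{\D}}\times\{-1\})^+=\D_{\P_{\bar{\D}}}$.

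The inclusion $\supseteq$ in this reduced identity is immediate: every element of $\P_{\bar{\D}}\times\{-1\}$ lies in $\bar{\D}^+$ by the very definition of $\P_{\bar{\D}}$, and $\bar{\D}^+$ is already a closed convex cone, hence it contains the closed conic hull of any subset.

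For the reverse inclusion, I would first exploit the lower-set hypothesis $\bar{\D}=\bar{\D}-K$ to show that every $u=(v^\T,\beta)^\T\in\bar{\D}^+$ satisfies $\beta\leq 0$: fixing any $z\in\bar{\D}$ and evaluating $u$ against $z-\lambda e^{q+1}\in\bar{\D}$ while letting $\lambda\to\infty$ forces $\beta\leq 0$. Then I would split into two cases. If $\beta<0$, multiplying $u$ by the positive scalar $-1/\beta$ yields $((-v/\beta)^\T,-1)^\T\in\bar{\D}^+$, which means $-v/\beta\in\P_{\bar{\D}}$, and so $u=(-\beta)\cdot((-v/\beta)^\T,-1)^\T\in\cone(\P_{\bar{\D}}\times\{-1\})$. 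If $\beta=0$, I would pick any $y_0\in\P_{\bar{\D}}$ (which exists by the standing hypothesis $\P_{\bar{\D}}\neq\emptyset$) and set $u_0\coloneqq(y_0^\T,-1)^\T\in\bar{\D}^+$; for each $\lambda>0$, the convex-cone property of $\bar{\D}^+$ gives $\lambda u_0+u\in\bar{\D}^+$ with last coordinate $-\lambda<0$, so by the previous case $\lambda u_0+u\in\cone(\P_{\bar{\D}}\times\{-1\})$, and letting $\lambda\to 0^+$ yields $u\in\cl\cone(\P_{\bar{\D}}\times\{-1\})$. The main obstacle is precisely this case $\beta=0$: such $u$ cannot be rescaled directly into $\P_{\bar{\D}}\times\{-1\}$ and must be recovered as a limit along a perturbation based at some $y_0\in\P_{\bar{\D}}$, which is exactly where the hypothesis $\P_{\bar{\D}}\neq\emptyset$ is essential.
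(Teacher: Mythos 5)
Your proof is correct. It shares the same skeleton as the paper's argument -- both rest on \Cref{rem:PDdualcone}, the bipolar identity $\bar{\D}=\bar{\D}^{++}$, and a case split on the sign of the last coordinate of elements of $\bar{\D}^+$ -- but you organize and execute it differently. The paper proves the two inclusions of $\D_{\P_{\bar{\D}}}=\bar{\D}$ directly: it gets $\supseteq$ from $\D_{\P_{\bar{\D}}}=\bigl(\bar{\D}^+\cap(\R^q\times\{-1\})\bigr)^+\supseteq\bar{\D}^{++}$, and for $\subseteq$ it takes $(w^\T,\alpha)^\T\in\D_{\P_{\bar{\D}}}$ and verifies $w^\T y+\beta\alpha\geq 0$ against every $(y^\T,\beta)^\T\in\bar{\D}^+$; in the delicate case $\beta=0$ it identifies $y$ as a recession direction of $\P_{\bar{\D}}$ and invokes the sequence characterization of $\rec\P_{\bar{\D}}\times\{0\}$ from the proof of Rockafellar's Theorem 8.2, passing to a limit $\lambda_n y^n\to y$ with $y^n\in\P_{\bar{\D}}$. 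You instead isolate the cleaner primal identity $\bar{\D}^+=\cl\cone(\P_{\bar{\D}}\times\{-1\})$ and dualize once, and you dispatch the $\beta=0$ case by the perturbation $\lambda u_0+u\in\bar{\D}^+$ (with $u_0=(y_0^\T,-1)^\T$, $y_0\in\P_{\bar{\D}}$) followed by $\lambda\to 0^+$ and closedness of the closed conic hull. This buys a more elementary and self-contained treatment of the hard case -- no appeal to the recession-cone sequence machinery -- while using the hypothesis $\P_{\bar{\D}}\neq\emptyset$ in exactly the same essential place as the paper does. One small point worth making explicit if you write this up: the dualization step uses $(\cl\cone A)^+=A^+$ together with the convexity of $\P_{\bar{\D}}\times\{-1\}$, so that $\cl\cone(\P_{\bar{\D}}\times\{-1\})=(\P_{\bar{\D}}\times\{-1\})^{++}$ in the sense of the paper's definition of $A^{++}$.
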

		\begin{proof}
			By \Cref{rem:PDdualcone}, we have
			\begin{align*}
			\D_{\P_{\bar{\D}}}=(\P_{\bar{\D}}\times\{-1\})^+&=(\{y\in\R^q\mid  (y^{\T},-1)^{\mathsf{T}}\in \bar{\D}^+\}\times\{-1\})^+\\
			&=\of{\bar{\D}^+\cap (\R^q\times\{-1\})}^+.
			\end{align*}
			From this observation, it follows that $\D_{\P_{\bar{\D}}}\supseteq \bar{\D}^{++}=\bar{\D}$ since $\bar{\D}$ is a nonempty closed convex cone. To prove the reverse inclusion, let $(w^{\T},\alpha)^{\T}\in \D_{\P_{\bar{\D}}}$. Then, by the above observation, we have
			\begin{equation}\label{eq:pos_prod}
			(y^{\T},-1)(w^{\T},\alpha)^{\T}=w^{\T}y-\alpha\geq 0
			\end{equation}
			for every $y\in\R^q$ such that $(y^{\T},-1)^{\T}\in \bar{\D}^+$. We show that $(w^{\T},\alpha)^{\T}\in \bar{\D}=\bar{\D}^{++}$. To that end, let $(y^{\T},\beta)^{\T}\in \bar{\D}^+$. We claim that $(y^{\T},\beta)(w^{\T},\alpha)^{\T}=w^{\T}y+\beta\alpha\geq 0$. First, note that $\beta\leq 0$ since $(y^{\T},\beta)^{\T}\in \bar{\D}^+$ and $\bar{\D}$ is a lower set. We consider the following cases.\\
			\emph{Case 1:} Suppose that $\beta<0$. Then, we have $(-\frac{1}{\beta}y^{\T},-1)^{\T}\in \bar{\D}^+$ since $\bar{\D}^+$ is a cone. In particular, applying \eqref{eq:pos_prod} for this point yields $-\frac{1}{\beta}w^{\T}y-\alpha \geq 0$, that is, $w^{\T}y+\beta\alpha\geq 0$. Hence, the claim follows.\\
			\emph{Case 2:} Suppose that $\beta = 0$. Since $\bar{\D}^+$ is a convex cone, $(y^{\T},0)^{\T}$ is a recession direction of $\bar{\D}^+$. In particular, for every $\bar{y}\in \P_{\bar{\D}}\neq \emptyset$, we have $(\bar{y}^{\T},-1)^{\T}\in\bar{\D}^+$ by \Cref{rem:PDdualcone}, which implies that $(\bar{y}^{\T},-1)^{\T}+t(y^{\T},0)^{\T}=((\bar{y}+ty)^{\T},-1)^{\T}\in\bar{\D}^+$ for every $t\in\R_+$. Hence, by \Cref{rem:PDdualcone} again, $\bar{y}+ty\in \P_{\bar{\D}}$ for every $\bar{y}\in\P_{\bar{\D}}$ and $t\in\R_+$, that is, $y\in \rec \P_{\bar{\D}}$. Furthermore, the proof of \cite[Theorem 8.2]{rockafellar_1970} states that $\rec \P_{\bar{\D}}\times\{0\}$ consists of the limits of all sequences of the form $(\lambda_n((y^n)^{\T},-1)^{\T})_{n\in\N}$, where $(\lambda_n)_{n\in\N}$ is a decreasing sequence in $\R_+$ whose limit is $0$, and $((y^n)^{\T})_{n\in\N}$ is a sequence in $\P_{\bar{\D}}$. Since $(y^{\T},0)^{\T}\in\rec \P_{\bar{\D}}\times \{0\}$, such $(\lambda_{n})_{n\in\N}$ and $((y^n)^{\T})_{n\in\N}$ exist with the additional property that $\lim_{n\rightarrow\infty}\lambda_n y^n=y$. For each $n\in\N$, having $y^n\in \P_{\bar{\D}}$ implies that $((y^n)^{\T},-1)^{\T}\in \bar{\D}^+$ by \Cref{rem:PDdualcone}, and hence $w^{\T}y^n-\alpha\geq 0$ by \eqref{eq:pos_prod}. Then, $w^{\T}\lambda_n y^n-\lambda_n \alpha \geq 0$ for each $n\in\N$. Letting $n\rightarrow\infty$ yields that $w^{\T}y=w^{\T}y+\beta\alpha\geq 0$. Hence, the claim follows.\\
			Therefore, $(w^{\T},\alpha)^{\T}\in \bar{\D}^{++}=\bar{\D}$, which completes the proof of $\D_{\P_{\bar{\D}}}\subseteq\bar{\D}$.
			%
	\end{proof}}
	
	The \rev{next lemma} shows that, when computing $\P_{\bar{\D}}$, considering the extreme directions of $\bar{\D}$ is sufficient under a special structure for $\bar{\D}$. Its proof is quite routine, we omit it.
	\begin{lemma}\label{lem:6}
		Let $\bar{\D}=\cone \conv \xi(\bar{\W})-K$ 
		for some $\bar{\W}\subseteq\W$. Then,
		$
		\P_{\bar{\D}}=\{y \in \R^q\mid 
		\forall(w^{\T},\alpha)^{\T}\in\xi(\bar{\W})\colon \varphi(y,w,\alpha)\geq0\}$.
	\end{lemma}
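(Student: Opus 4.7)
The plan is to prove set equality by showing the two inclusions separately, leveraging the linearity of $\varphi$ in its second and third arguments.

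For the $\subseteq$ direction, I would first observe that $\xi(\bar{\W}) \subseteq \bar{\D}$. Indeed, for any $w \in \bar{\W}$, we can write $\xi(w) = 1 \cdot \xi(w) - 0 \cdot e^{q+1}$, so $\xi(w) \in \cone \conv \xi(\bar{\W}) - K = \bar{\D}$. Hence if $y \in \P_{\bar{\D}}$, then in particular $\varphi(y,w,\alpha) \geq 0$ for every $(w^{\T},\alpha)^{\T} \in \xi(\bar{\W})$, which gives the desired inclusion.

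For the reverse $\supseteq$ direction, assume $y \in \R^q$ satisfies $\varphi(y, w, p^w) \geq 0$ for every $w \in \bar{\W}$. Let $(\bar{w}^{\T}, \bar{\alpha})^{\T} \in \bar{\D}$ be arbitrary. By the definition of $\bar{\D}$, there exist $\lambda \geq 0$, $\beta \geq 0$, points $w^1, \ldots, w^k \in \bar{\W}$, and nonnegative weights $\mu_1, \ldots, \mu_k$ with $\sum_{i=1}^k \mu_i = 1$ such that
\[
(\bar{w}^{\T}, \bar{\alpha})^{\T} = \lambda \sum_{i=1}^k \mu_i \xi(w^i) - \beta e^{q+1}.
\]
Componentwise, this yields $\bar{w} = \lambda \sum_{i=1}^k \mu_i w^i$ and $\bar{\alpha} = \lambda \sum_{i=1}^k \mu_i p^{w^i} - \beta$. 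Since $\varphi(y, \cdot, \cdot)$ is linear in its last two arguments (by \eqref{eq:varphi}), a direct calculation gives
\[
\varphi(y, \bar{w}, \bar{\alpha}) = \bar{w}^{\T} y - \bar{\alpha} = \lambda \sum_{i=1}^k \mu_i \bigl((w^i)^{\T} y - p^{w^i}\bigr) + \beta = \lambda \sum_{i=1}^k \mu_i \varphi(y, w^i, p^{w^i}) + \beta.
\]
Each term on the right-hand side is nonnegative by the hypothesis on $y$ and the nonnegativity of $\lambda, \mu_i, \beta$, so $\varphi(y, \bar{w}, \bar{\alpha}) \geq 0$, and therefore $y \in \P_{\bar{\D}}$.

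I do not expect any serious obstacle here; the argument is essentially the observation that a linear functional is nonnegative on a set of the form $\cone \conv S - K$ (with $K = \cone\{e^{q+1}\}$) if and only if it is nonnegative on the generators of $S$ together with $-e^{q+1}$. The slight subtlety is that $\cone \conv \xi(\bar{\W})$ need not be closed in general when $\bar{\W}$ is infinite, but no closure operation is needed in the statement: the decomposition above applies to elements of $\cone \conv \xi(\bar{\W}) - K$ directly (by Carathéodory's theorem in finite dimension, one may even take $k \leq q+2$), so the argument goes through regardless of whether $\bar{\W}$ is finite.
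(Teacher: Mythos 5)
Your proof is correct and follows essentially the same route as the paper's: both directions rest on the observation that $\xi(\bar{\W})\subseteq\bar{\D}$ and on decomposing an arbitrary element of $\cone\conv\xi(\bar{\W})-K$ as a nonnegative combination of points of $\xi(\bar{\W})$ minus a multiple of $e^{q+1}$, then using linearity of $\varphi(y,\cdot,\cdot)$. The only cosmetic difference is that you separate the conic scaling $\lambda$ from the convex weights $\mu_i$, whereas the paper absorbs both into a single nonnegative combination; the computation is identical.
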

	
	\rev{The next corollary provides two special cases of \Cref{prop:DPD=D} which will be used later.
		\begin{corollary}\label{cor:special}
			Let $\bar{\mathcal{W}}\subseteq\mathcal{W}$ be a nonempty finite set. Then, $\bar{\D}=\D_{\P_{\bar{\D}}}$ in each of the following cases.\\
			(a) $\bar{\D}=\cone\conv\xi(\bar{\W})-K$.\quad 
			(b) $0\notin \bar{\W}$ and $\bar{\D}=\cone(\conv\xi(\bar{\W}) +\epsilon\{e^{q+1}\})-K$.
		\end{corollary}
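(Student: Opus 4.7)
The plan is to reduce both parts to \Cref{prop:DPD=D} by verifying its hypotheses. In both cases, $\bar{\D}$ is clearly nonempty; moreover, $\bar{\D}$ is a lower set because $K+K=K$, and $\bar{\D}$ is a convex cone since the Minkowski sum of a convex cone and the cone $-K$ is a convex cone. Because $\bar{\W}$ is finite, the set $\cone\conv\xi(\bar{\W})$ (respectively $\cone(\conv\xi(\bar{\W})+\epsilon\{e^{q+1}\})$) coincides with the conic hull of the finitely many points $\{\xi(w):w\in\bar{\W}\}$ (respectively $\{\xi(w)+\epsilon e^{q+1}:w\in\bar{\W}\}$), hence is a polyhedral cone. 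Subtracting the polyhedral cone $K$ preserves both polyhedrality and closedness. Thus, in each case, $\bar{\D}$ is a nonempty closed convex cone that is a lower set.

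It remains to verify $\P_{\bar{\D}}\neq\emptyset$. In case (a), every generator $\xi(w)$ with $w\in\bar{\W}$ lies in $\D=\xi(\W)-K$, so $\cone\conv\xi(\bar{\W})\subseteq\D$ by the conic convexity of $\D$ established in \Cref{prop:Dcone}. Since $\D-K=\D$, we conclude $\bar{\D}\subseteq\D$. The map $\bar{\D}\mapsto\P_{\bar{\D}}$ is inclusion-reversing by its definition, so $\P_{\bar{\D}}\supseteq\P_\D=\P$, which is nonempty by \Cref{prop:upperimage} and \Cref{prop:DPequalsD}.

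The main obstacle is case (b), since $\bar{\D}\not\subseteq\D$: by \Cref{prop:WS_Kmax}, each shifted point $\xi(w)+\epsilon e^{q+1}$ escapes $\D$, so the monotonicity argument used in case (a) fails. I would instead characterize $\P_{\bar{\D}}$ directly. Arguing as in \Cref{lem:6} with the generators $\{\xi(w)+\epsilon e^{q+1}:w\in\bar{\W}\}$ of $\bar{\D}+K$, one obtains
\[
\P_{\bar{\D}}=\{y\in\R^q\mid \forall w\in\bar{\W}\colon w^{\T}y\geq p^w+\epsilon\}.
\]
To produce a feasible point, fix any $\bar{y}\in\P$ and any $c^0\in\Int C$, which exists because $C$ is solid. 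Since $0\notin\bar{\W}\subseteq C^+$ and $c^0\in\Int C$, the standard strict separation property gives $w^{\T}c^0>0$ for every $w\in\bar{\W}$. As $\bar{\W}$ is finite, there exists $t>0$ large enough that $tw^{\T}c^0\geq\epsilon$ for every $w\in\bar{\W}$. Combined with $w^{\T}\bar{y}\geq p^w$, which holds by \Cref{prop:supp_halfsp}, this yields $w^{\T}(\bar{y}+tc^0)\geq p^w+\epsilon$ for every $w\in\bar{\W}$, hence $\bar{y}+tc^0\in\P_{\bar{\D}}$.

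With all hypotheses verified, \Cref{prop:DPD=D} applies in both cases and delivers $\D_{\P_{\bar{\D}}}=\bar{\D}$, completing the proof.
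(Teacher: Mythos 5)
Your proof is correct and follows essentially the same route as the paper: reduce to \Cref{prop:DPD=D} by checking that $\bar{\D}$ is a nonempty closed convex lower cone and that $\P_{\bar{\D}}\neq\emptyset$, with case (b) handled exactly as in the paper by shifting a point of $\P$ along a suitable multiple of an interior direction $\bar{c}\in\Int C$ satisfying $w^{\T}\bar{c}\geq\epsilon$ for all $w\in\bar{\W}$. The only minor variation is in case (a), where you deduce $\P_{\bar{\D}}\supseteq\P_{\D}=\P\neq\emptyset$ from $\bar{\D}\subseteq\D$ and the inclusion-reversing nature of $\bar{\D}\mapsto\P_{\bar{\D}}$, whereas the paper directly exhibits $f(\bar{x})\in\P_{\bar{\D}}$ via \Cref{lem:6}; both arguments are equally valid.
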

		\begin{proof}
			Since $\bar{\mathcal{W}}$ is finite, $\bar{\D}$ is a closed convex lower set that is also a cone in each case. Next, we show that $\P_{\bar{\D}}\neq\emptyset$.\\
			(a) Let $\bar{x}\in\X$. We have $w^{\T}f(\bar{x})\geq \inf_{x\in\X}w^{\T}f(x)=p^w$ for every $w\in \W$, in particular, for every $w\in\bar{\W}$. Hence, $f(\bar{x})\in\P_{\bar{\D}}$ by \Cref{lem:6}.\\
			(b) By the definition of $\P_{\bar{\D}}$ and simple algebraic manipulations, we have
			\begin{align*}
			\P_{\bar{\D}}&=\{y\in\R^q\mid \forall (w^{\T},\alpha)^{\T}\in \bar{\D}\colon w^{\T}y\geq \alpha\}\\
			&=\{y\in\R^q\mid \forall (w^{\T},\alpha)^{\T}\in \conv \xi(\bar{\W})+\epsilon \{e^{q+1}\} \colon w^{\T}y\geq \alpha\}\\
			&=\{y\in\R^q\mid \forall (w^{\T},\alpha)^{\T}\in \conv\xi(\bar{\W}) \colon w^{\T}y\geq \alpha+\epsilon\}\\
			&=\{y\in\R^q\mid \forall (w^{\T},\alpha)^{\T}\in \xi(\bar{\W}) \colon w^{\T}y\geq \alpha+\epsilon\}.
			\end{align*}
			Since $0\notin \bar{\W}$ and $\bar{\W}$ is finite, there exists $\bar{c}\in\Int C$ such that $w^{\T}\bar{c}\geq \epsilon$ for every $w\in\bar{\W}$. Let $\bar{x}\in \X$. Then, $w^{\T}(f(\bar{x})+\bar{c})\geq p^w+\epsilon$ for every $w\in\bar{\W}$. Hence, $f(\bar{x})+\bar{c}\in \P_{\bar{\D}}$.
	\end{proof}}

	\section{Algorithms}\label{sec:algorithms}
	
	In this section, we will present two approximation algorithms, namely the primal and dual algorithms, for solving the primal and dual problems, \eqref{P} and \eqref{D}, simultaneously. First, we will explain the primal algorithm, which is proposed in \cite{umer_2022} for solving \eqref{P} only, and show that by simple modifications, this algorithm also yields a solution to the dual problem \eqref{D}. 
	Next, we will describe the dual algorithm which uses the geometric duality results from \Cref{sec:geomdual}. 
	
	Recall that $C$ is assumed to be a nontrivial, pointed, solid, closed and convex cone; the vector-valued objective function $f\colon \X\to\R^q $ is $C$-convex and continuous; and the feasible set $\X\subseteq \R^{m} $ is compact and convex. We further assume the following from now on.
	
	\begin{assumption}\label{ass:1}
		(a) The feasible region of \eqref{P} has nonempty interior, that is, $\Int \X \neq \emptyset$; and (b) the ordering cone $C$ is polyhedral. 
	\end{assumption}
	
	Under \Cref{ass:1} (b), it is known that $C^+$ is polyhedral. We denote the generating vectors of $C^+$ by $w^1,\ldots, w^J$ and assume without loss of generality that $\norm{w^j}_\ast = 1$ for each $j\in\{1,\ldots,J\}$.
	
	\subsection{Primal algorithm}
	
	The primal algorithm in \cite{umer_2022} is an outer approximation algorithm, that is, it works with polyhedral outer approximations of the upper image $\P$. In particular, it starts by finding a polyhedral outer approximation $\P_0$ of the upper image and iterates by updating the outer approximation with the help of supporting halfspaces of $\P$ until the approximation is sufficiently fine.
	
	In each iteration of the primal algorithm (\Cref{alg1}), the following norm-minimizing scalarization problem is solved:
	\begin{equation*}\tag{$\textnormal{P}(v)$}\label{PS(v)}
	\text{minimize } \norm{z} \text{ subject to } f(x)-z-v \in -C,\ z \in \R^q, \ x\in\X,
	\end{equation*}
	where $v\in\R^q$ is a parameter to be set by the algorithm. The Lagrangian dual of \eqref{PS(v)} is given by 
	\begin{equation*}\tag{$\textnormal{D}(v)$}\label{D-PS(v)}
	\text{maximize } \inf_{x\in\X} w^\T f(x)-w^\T v \text{ subject to }  \norm{w}_\ast \leq 1, \ w \in C^+.
	\end{equation*}
	
	Before explaining the details of the algorithm, we present some results regarding \eqref{PS(v)} and \eqref{D-PS(v)}; see \cite{umer_2022} for details and further results.
	
	
	
	
	\begin{proposition} \label{prop:PS_oldresults}
		Let $v \in \R^q$. The following statements hold under \Cref{ass:1}:
		(a) \cite[Proposition 4.2]{umer_2022} There exist optimal solutions $(x^v,z^v)$ and $w^v$ to \eqref{PS(v)} and \eqref{D-PS(v)}, respectively, and the optimal values coincide.
		(b) \cite[Proposition 4.6]{umer_2022} If $v\notin \Int\P$, then $x^v$ is a weak minimizer for \eqref{P}.
		(c) \cite[Remark 4.4]{umer_2022} $x^v $ is an optimal solution to $\textnormal{(WS}(w^v))$, i.e., $\inf_{x\in\X}(w^v)^\T f(x) = (w^v)^\T f(x^v)$.
	\end{proposition}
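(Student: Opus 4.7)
The plan is to verify each of the three parts independently via standard convex analysis, since the statement simply collects results from \cite{umer_2022}. For part (a), feasibility of \eqref{PS(v)} is immediate: take any $x_0\in\X$ and $z_0\in f(x_0)-v+C$. A Weierstrass-type argument then yields a primal optimizer: any minimizing sequence has its $x$-component in the compact set $\X$ and its $z$-component confined to a bounded sublevel set of $\norm{\cdot}$, and continuity of $f$ preserves feasibility in the limit. For the dual, the feasible set $\{w\in C^+:\norm{w}_\ast\leq 1\}$ is compact (closed by polyhedrality of $C^+$, bounded by the norm constraint) and the objective $w\mapsto\inf_{x\in\X}w^\T f(x)-w^\T v$ is concave and upper semicontinuous (an infimum of a family of continuous affine functions over a compact set), so Weierstrass produces $w^v$. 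Strong duality (coincidence of optimal values) follows from Slater's condition: \Cref{ass:1}(a) gives $\bar{x}\in\Int\X$, and solidness of $C$ produces $\bar{z}\in f(\bar{x})-v+\Int C$, so $(\bar{x},\bar{z})$ is strictly feasible.

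For part (b), I proceed by contradiction: assume $x^v$ is not a weak minimizer, so there is $\tilde{x}\in\X$ with $c:=f(x^v)-f(\tilde{x})\in\Int C$. If $z^v\neq 0$, the pair $(\tilde{x},(1-\epsilon)z^v)$ is feasible for small $\epsilon>0$; indeed, feasibility reduces to $(z^v+v-f(x^v))+c-\epsilon z^v\in C$, where the first summand lies in $C$ by feasibility of $(x^v,z^v)$ and $c-\epsilon z^v\in\Int C$ for $\epsilon$ small since $c\in\Int C$. Its objective $(1-\epsilon)\norm{z^v}$ is strictly smaller than $\norm{z^v}$, contradicting optimality. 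If $z^v=0$, then $v=f(x^v)+c_0$ for some $c_0\in C$, hence $v=f(\tilde{x})+c+c_0\in f(\tilde{x})+\Int C\subseteq\Int\P$ (using $\Int C+C\subseteq\Int C$ and the fact that $f(\tilde{x})+\Int C$ is an open subset of $\P$), which contradicts $v\notin\Int\P$.

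For part (c), strong duality from (a) gives $\norm{z^v}=\inf_{x\in\X}(w^v)^\T f(x)-(w^v)^\T v$. On the other hand, applying $w^v\in C^+$ to $f(x^v)-z^v-v\in -C$ and using $\norm{w^v}_\ast\leq 1$ yields
\[
(w^v)^\T f(x^v)-(w^v)^\T v\leq (w^v)^\T z^v\leq \norm{w^v}_\ast\norm{z^v}\leq \norm{z^v}.
\]
Combining the two bounds, $x^v$ attains the infimum in the Lagrangian dual, that is, $\inf_{x\in\X}(w^v)^\T f(x)=(w^v)^\T f(x^v)$, which is the claim.

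The main obstacle is part (b): the natural scaling argument $\tilde{z}=(1-\epsilon)z^v$ fails when $z^v=0$, and one must separately analyze the geometric position of $v$ relative to $\P$ to derive the contradiction. The key observation that resolves this subcase is the algebraic identity $\Int C+C\subseteq\Int C$, which promotes the rewriting $v=f(\tilde{x})+(c+c_0)$ into a certificate of $v\in\Int\P$.
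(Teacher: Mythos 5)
Your proposal is correct, but note that the paper itself offers no proof of this proposition: all three parts are imported verbatim from \cite{umer_2022} (Proposition 4.2, Proposition 4.6, Remark 4.4 there), and the surrounding text explicitly defers to that reference. So there is no in-paper argument to compare against; what you have done is reconstruct self-contained proofs, and they hold up. Part (a) is the standard Weierstrass-plus-Slater package: feasibility of \eqref{PS(v)} is trivial since $z$ is free, compactness of $\X$ and of $\{w\in C^+\mid \norm{w}_\ast\leq 1\}$ gives attainment on both sides, and the strictly feasible point $(\bar{x},\bar{z})$ with $f(\bar{x})-\bar{z}-v\in-\Int C$ (available because $C$ is solid and $z$ is unconstrained) closes the duality gap. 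Part (b) is handled correctly by the two-case scaling argument; the case split at $z^v=0$ is genuinely necessary, and your resolution via $v=f(\tilde{x})+(c+c_0)\in f(\tilde{x})+\Int C\subseteq\Int\P$ is exactly the right certificate contradicting $v\notin\Int\P$. Part (c) is the cleanest: sandwiching $(w^v)^\T f(x^v)-(w^v)^\T v$ between the weak-duality chain $(w^v)^\T z^v\leq\norm{w^v}_\ast\norm{z^v}\leq\norm{z^v}$ and the strong-duality identity forces equality throughout, which is precisely complementary slackness yielding optimality of $x^v$ for $(\textnormal{WS}(w^v))$. The only cosmetic remark is that in part (a) Slater's condition does not actually require $\bar{x}\in\Int\X$ (any $\bar{x}\in\ri\X$ works since the coupling constraint can always be made strict by choice of $\bar{z}$), but invoking \Cref{ass:1}(a) as you do is harmless and matches the hypotheses under which the cited results are stated.
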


	The primal algorithm is initialized by solving the weighted sum scalarization problem \eqref{WS(w)} for each generating vector of the dual ordering cone $C^+$. Let $x^j \in \X$ denote the optimal solution of $(\textnormal{WS}(w^j))$ for each $j\in\{1,\ldots, J\}$. By \Cref{prop:WS}, each $x^j$ is a weak minimizer for \eqref{P}. Moreover, from \Cref{prop:WS_Kmax} and \Cref{def:solnDual}, it is known that each $w^j$ is a maximizer for \eqref{D}. Hence, we initialize the set to be returned as a weak $\epsilon$-solution for \eqref{P} as $\bar{\X} = \{x^1,\ldots,x^J\}$; and the set to be returned as an $\epsilon$-solution to \eqref{D} as $\bar{\W} = \{w^1,\ldots,w^J\}$. Note that by \Cref{prop:supp_halfsp}, for each $j\in\{1,\ldots, J\}$, the set $\H(\xi(w^j))$ is a supporting halfspace of $\P$ such that $\H(\xi(w^j))\supseteq \P$. Then, the initial outer approximation of $\P$ is defined as
	$\P_0\coloneqq \bigcap_{j=1}^{J}\H(\xi(w^j))$. 
	As part of the initialization, we introduce a set $\Vkn$, which stores the set of vertices that have already been considered by the algorithm and initialize it as $\Vkn=\emptyset$, see lines 1-3 of \Cref{alg1}. We later introduce a set $\Vunkn$, which stores the set of vertices of the current approximation that are not yet considered, see line 7.
	
	In each iteration $k$, the first step is to compute the vertices $\V_k$ of the current outer approximation $\P_k$ by solving a vertex enumeration problem (line 6). Then, for each vertex in $\V_k$ which have not been considered before, optimal solutions for \eqref{PS(v)} and \eqref{D-PS(v)} are found; the respective solutions are added to sets $\bar{\X}$ and $\bar{\W}$ (see \Cref{prop:PS_oldresults}(b) and \Cref{prop:WS_Kmax}); and $\Vkn$ is updated (lines 7-10). Note that by \Cref{prop:PS_oldresults}(c) and \Cref{prop:supp_halfsp}, $\H(\xi(w^v))$ is a supporting halfspace of $\P$. If the distance of a vertex $v$ to the upper image, namely $\norm{z^v}$, is not sufficiently small, then $\H(\xi(w^v))=\{y\in \R^q\mid \varphi(y,w^v,(w^v)^\T f(x^v))\geq 0 \}$ is stored in order to be used in updating the current outer approximation. After each vertex in $\V_k$ is considered, then the current approximation is updated by intersecting it with those halfspaces (lines 11-16). The algorithm terminates when all the vertices of $\P_k$ are in $\epsilon$ distance to the upper image (lines 5 and 18).

	\begin{algorithm}[!]  
		\caption{Primal algorithm}
		\begin{algorithmic} [1]
			\STATE Compute an optimal solution $x^j$ of $(\textnormal{WS}(w^j))$ for each $j\in\{1,\ldots,J\}$;
			\STATE Let $\P_0=\bigcap_{j=1}^{J}\H(\xi(w^j))$;
			\STATE $k \gets 0, \bar{\X}\gets\{x^1, \dots, x^J\}, \bar{\W}\gets\{w^1, \dots, w^J\}, \Vkn=\emptyset$; 
			\REPEAT
			\STATE $M\gets\R^q$;
			\STATE Compute the set $\V_k$ of vertices of $\P_k $;
			\STATE $\Vunkn\gets \V_k \setminus \Vkn$;
			\FOR {$v\in \Vunkn$}
			\STATE 
			Compute optimal solutions $(x^v,z^v)$ and $w^v$ to \eqref{PS(v)} and \eqref{D-PS(v)};
			\STATE $\bar{\X} \gets \bar{\X} \cup \{x^v\}, \bar{\W} \gets \bar{\W} \cup \{\frac{w^v}{\norm{w^v}_\ast}\}, \Vkn \gets \Vkn \cup \{v\}$;
			\IF {$\norm{z^v} >\epsilon$}
			
			\STATE $M \leftarrow M \cap \H(\xi(w^v))$;
			\ENDIF		
			\ENDFOR
			\IF{$M\neq\R^q$}
			\STATE 
			$\P_{k+1}=\P_k\cap M, \ k \gets k+1$;
			\ENDIF
			\UNTIL $M=\R^q$
			\RETURN 
			$\left\{\begin{array}{ll}\bar{\mathcal{X}}& 
			\colon\text{A finite weak }\epsilon \text{-solution to \eqref{P}}; \\
			\bar{\mathcal{W}}&\colon\text{A finite }\epsilon \text{-solution to \eqref{D}};
			\end{array}\right.	$
		\end{algorithmic}
		\label{alg1}
	\end{algorithm}
	
	\begin{remark}\label{rem:3}
		A `break' command can be placed between lines 12 and 13 in the algorithm. In the current version, the algorithm goes over all the vertices of the current outer approximation without updating it. With the `break' command, the algorithm updates the outer approximation as soon as it detects a vertex $v$ with $\norm{z^v} >\epsilon $.
	\end{remark}
	
	The next proposition states that \Cref{alg1} gives a finite weak $\epsilon$-solution 
	to \eqref{P}. 
	
	\begin{proposition}\cite[Theorem 5.4]{umer_2022} \label{prop:11}
		If the primal algorithm terminates, then it returns a finite weak $\epsilon$-solution $\bar{\mathcal{X}} $ to \eqref{P}.
	\end{proposition}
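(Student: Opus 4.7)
The plan is to verify the two requirements of \Cref{def:solnPrimal}: (i) every element of $\bar{\X}$ is a weak minimizer of \eqref{P}, and (ii) the outer inclusion $\conv f(\bar{\X})+C+B(0,\epsilon)\supseteq \P$ holds upon termination. For (i), the initializers $x^1,\ldots,x^J$ are weak minimizers by \Cref{prop:WS}. For each $v$ processed later in line 9, $v$ is a vertex of the current outer approximation $\P_k$; since $\P\subseteq\P_k$ always, having $v\in\Int\P$ would force a neighborhood of $v$ to lie in $\P_k$, contradicting its vertex nature. Hence $v\notin\Int\P$ and \Cref{prop:PS_oldresults}(b) yields that $x^v$ is a weak minimizer.

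For (ii), I would establish three structural facts. First, $\rec \P_k = C$ throughout the iterations: $\rec\P_0=\bigcap_{j=1}^J\{y\in\R^q\mid (w^j)^\T y\geq 0\}=(C^+)^+=C$ because $\{w^1,\ldots,w^J\}$ generates $C^+$, and every halfspace $\H(\xi(w^v))$ added later has recession cone $\{y\in\R^q \mid (w^v)^\T y\geq 0\}\supseteq C$ since $w^v\in C^+$. Second, at termination every vertex $v\in\V_k$ satisfies $\|z^v\|\leq\epsilon$ and $x^v\in\bar{\X}$: for $v\in\Vunkn$ this is immediate from the termination condition $M=\R^q$, while for a vertex $v\in\Vkn\cap\V_k$ processed at some earlier iteration $k'<k$, strong duality (\Cref{prop:PS_oldresults}(a,c)) gives $\|z^v\|=(w^v)^\T f(x^v)-(w^v)^\T v$, so if $\|z^v\|>\epsilon$ had held then $v\notin\H(\xi(w^v))$ and $v$ would have been excluded from $\P_{k'+1}$, contradicting $v\in\V_k$. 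Third, the feasibility constraint of \eqref{PS(v)} yields the decomposition $v=f(x^v)-z^v+c_v$ for some $c_v\in C$.

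Combining these, let $y\in\P\subseteq\P_k$. Since $\P_k$ is a polyhedron with vertex set $\V_k$ and recession cone $C$, we may write $y=\sum_{v\in\V_k}\lambda_v v+c$ with $\lambda_v\geq 0$, $\sum_{v}\lambda_v=1$, and $c\in C$. Substituting the decomposition of each vertex,
\[
y=\sum_{v\in\V_k}\lambda_v f(x^v)+\Bigl(\sum_{v\in\V_k}\lambda_v c_v+c\Bigr)-\sum_{v\in\V_k}\lambda_v z^v,
\]
where the first summand lies in $\conv f(\bar{\X})$, the parenthesized summand lies in $C$ by conic convexity, and the last summand has norm at most $\sum_v \lambda_v\|z^v\|\leq \epsilon$ by the triangle inequality and the termination-time bound, hence lies in $B(0,\epsilon)$. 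This gives $y\in \conv f(\bar{\X})+C+B(0,\epsilon)$.

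The main obstacle is the bookkeeping argument in the second structural fact: one must rule out the possibility that a vertex of the current approximation was processed many iterations earlier with a large $\|z^v\|$ and somehow re-appeared as a vertex despite nested inclusions $\P_{k+1}\subseteq\P_k$. The strong-duality identity $\|z^v\|=(w^v)^\T f(x^v)-(w^v)^\T v$ is what makes this watertight, since it certifies that the cut $\H(\xi(w^v))$ strictly separates $v$ whenever $\|z^v\|>\epsilon$, after which monotonicity of the outer approximations prevents $v$ from ever becoming a vertex again.
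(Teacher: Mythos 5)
The paper does not actually prove this proposition: it is imported wholesale from the cited reference as \cite[Theorem 5.4]{umer_2022}, so there is no in-paper argument to compare against. Judged on its own, your reconstruction is correct and complete. You handle the two points that genuinely need care: (i) that each $x^v$ is a weak minimizer, which you get from the observation that a vertex of $\P_k\supseteq\P$ cannot lie in $\Int\P$ (an extreme point of a non-degenerate convex set is never interior), so \Cref{prop:PS_oldresults}(b) applies; and (ii) that \emph{every} vertex of the terminal approximation $\P_{\bar k}$ --- not just those in $\Vunkn$ at the last iteration --- satisfies $\norm{z^v}\le\epsilon$, which you secure via the strong-duality identity $\norm{z^v}=(w^v)^\T f(x^v)-(w^v)^\T v$ showing that any vertex with $\norm{z^v}>\epsilon$ is cut off by $\H(\xi(w^v))$ and, by nestedness of the $\P_k$, can never reappear. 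The remaining ingredients --- $\rec\P_k=C$ (which the paper itself invokes as \cite[Lemma 5.2]{umer_2022} in the proof of \Cref{prop:12}), the representation $\P_{\bar k}=\conv\V_{\bar k}+C$, and the feasibility decomposition $v=f(x^v)-z^v+c_v$ with $c_v\in C$ --- combine exactly as you write them, and the triangle-inequality bound $\norm{\sum_v\lambda_v z^v}\le\epsilon$ closes the argument. This is the standard route for Benson-type termination proofs and almost certainly mirrors the proof in the cited source; I see no gap.
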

	
	
	Next, we show that the primal algorithm yields also a finite $\epsilon$-solution, $\bar{\W}$, for the dual problem \eqref{D}. To that end, we provide the following lemma which shows that inner and outer approximations of the lower image $\D$ can be obtained by using a finite $\epsilon $-solution $\bar{\X}$ of \eqref{P}. Then, this lemma will be used in order to prove the main result of this section.
	
	\begin{lemma}\label{prop:9}
		For $\epsilon>0$, let $\bar{\X}$ be a finite weak $\epsilon$-solution of \eqref{P}, and $\P_\epsilon\coloneqq\conv f(\bar{\X})+C+B(0,\epsilon)$. Then, $\D_\epsilon\coloneqq\D_{\P_\epsilon}
		$ is an inner approximation of $\D$ and 
		\begin{equation}\label{eq:prop9}
		\cone\left(\left(\D_\epsilon \cap (\S^{q-1}\times\R)\right)+ \epsilon \{e^{q+1}\} \right)- K \supseteq\D \supseteq \D_\epsilon.
		\end{equation}
	\end{lemma}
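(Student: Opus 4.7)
The plan is to prove the two inclusions separately, with the easy direction coming from monotonicity of the transformation $\bar{\P}\mapsto \D_{\bar{\P}}$ and the harder direction resting on an explicit description of $\D_\epsilon$ in terms of the finite set $\bar{\X}$.

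For the inclusion $\D\supseteq\D_\epsilon$, I would first note that, by \Cref{def:solnPrimal}, a finite weak $\epsilon$-solution satisfies $\P\subseteq\P_\epsilon$. Directly from the definition of $\D_{(\cdot)}$, the map $\bar{\P}\mapsto\D_{\bar{\P}}$ is inclusion-reversing; hence $\D_\epsilon=\D_{\P_\epsilon}\subseteq\D_\P$, and \Cref{prop:DPequalsD}(a) gives $\D_\P=\D$.

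For the inclusion $\cone((\D_\epsilon\cap(\S^{q-1}\times\R))+\epsilon\{e^{q+1}\})-K\supseteq\D$, the key step is to characterize $\D_\epsilon$ explicitly. Using that $\P_\epsilon=\conv f(\bar{\X})+C+B(0,\epsilon)$, a linear functional $w^{\T}y$ is bounded below on $\P_\epsilon$ only when $w\in C^+$ (by \Cref{rem:w_inC+}), and in that case the infimum decomposes into a minimum over the finite set $\bar{\X}$ plus contributions from $c\in C$ and $b\in B(0,\epsilon)$, giving
\begin{equation*}
\D_\epsilon=\cb{(w^{\T},\alpha)^{\T}\in\R^{q+1}\mid w\in C^+,\ \alpha\leq \min_{x\in\bar{\X}} w^{\T}f(x)-\epsilon\norm{w}_\ast}.
\end{equation*}
With this description, given an arbitrary $(w^{\T},\alpha)^{\T}\in\D$ with $w\neq 0$, I would set $\tilde w\coloneqq w/\norm{w}_\ast\in C^+\cap\S^{q-1}$ and observe that $(\tilde w^{\T},\min_{x\in\bar{\X}}\tilde w^{\T}f(x)-\epsilon)^{\T}\in\D_\epsilon\cap(\S^{q-1}\times\R)$; adding $\epsilon e^{q+1}$ and multiplying by $\norm{w}_\ast$ then places $(w^{\T},\min_{x\in\bar{\X}}w^{\T}f(x))^{\T}$ into the conic hull. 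Since $\bar{\X}\subseteq\X$ yields $p^w\leq\min_{x\in\bar{\X}}w^{\T}f(x)$ and $\alpha\leq p^w$, the remaining gap is a nonnegative multiple of $e^{q+1}$, i.e., an element of $K$, so $(w^{\T},\alpha)^{\T}$ lies in the desired set. The residual case $w=0$ is immediate since then $\alpha\leq p^0=0$ and $0\in\cone(\cdot)$.

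The main obstacle is the explicit computation of $\inf_{y\in\P_\epsilon}w^{\T}y$; once the Minkowski-sum structure of $\P_\epsilon$ is unraveled into its three additive pieces and the dual-norm identity $\inf_{b\in B(0,\epsilon)}w^{\T}b=-\epsilon\norm{w}_\ast$ is applied, the rest is routine rescaling. A minor subtlety worth flagging is that restricting to $\S^{q-1}$ forces the normalization step via $\tilde w=w/\norm{w}_\ast$, which is precisely why the correction term $\epsilon\{e^{q+1}\}$ appears before the conic hull, matching the definition of finite $\epsilon$-supremizer in \Cref{def:solnDual}.
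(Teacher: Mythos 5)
Your proof is correct, and for the harder inclusion it takes a more direct route than the paper. The paper introduces the intermediate set $\P^\prime=\conv f(\bar{\X})+C$ and its transform $\D^\prime=\D_{\P^\prime}\supseteq\D$, proves the slice inclusion $(\D_\epsilon\cap(\S^{q-1}\times\R))+\epsilon\{e^{q+1}\}\supseteq\D^\prime\cap(\S^{q-1}\times\R)$ using the one-sided bound $w^\T\gamma\geq -\norm{\gamma}\norm{w}_\ast\geq-1$, and then separately establishes the cone identity $\cone(\D^\prime\cap(\S^{q-1}\times\R))-K=\D^\prime$ before chaining the three inclusions. You instead compute $\D_\epsilon$ in closed form by decomposing the support function of the Minkowski sum $\conv f(\bar{\X})+C+B(0,\epsilon)$ (the exact identity $\inf_{b\in B(0,\epsilon)}w^\T b=-\epsilon\norm{w}_\ast$ rather than the one-sided estimate), and then exhibit each $(w^\T,\alpha)^\T\in\D$ explicitly as $\norm{w}_\ast$ times a point of $(\D_\epsilon\cap(\S^{q-1}\times\R))+\epsilon\{e^{q+1}\}$ minus an element of $K$, using $\alpha\leq p^w\leq\min_{x\in\bar{\X}}w^\T f(x)$. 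This collapses the paper's two auxiliary inclusions into one constructive verification and makes the role of the normalization $\tilde w=w/\norm{w}_\ast$ and of the $\epsilon\{e^{q+1}\}$ correction transparent; the paper's organization, on the other hand, isolates the cone identity \eqref{eq:Dprime} in a form that is reused implicitly elsewhere. Both arguments rest on the same two ingredients (the dual-norm value of the ball's support function and the conic structure of the lower image), and your handling of the degenerate case $w=0$ and of the nonemptiness of the sliced set is sound.
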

	
	\begin{proof}
		Since $\bar{\X}$ is a finite weak $\epsilon$-solution of \eqref{P}, $\P_\epsilon$ is an outer approximation of the upper image $\P$ by \Cref{def:solnPrimal}, that is, $\P_\epsilon \supseteq\P$. By \Cref{prop:DPequalsD}(a), $\D\supseteq \D_\epsilon$. 
		
		In order to show the first inclusion in \eqref{eq:prop9}, we first note that $\P^\prime\coloneqq\conv f(\bar{\X})+C \subseteq \P$ and, 
		by \Cref{prop:DPequalsD}(a), we have
		\begin{equation}\label{eq:inclzero}
		\D^\prime\coloneqq \D_{\P^\prime} =\{(w^\T,\alpha)^\T \in \R^{q+1} \mid \forall y \in \P^\prime \colon \varphi(y,w,\alpha)\geq0\} 
		\supseteq \D.
		\end{equation}
		We claim that
		\begin{equation} \label{eq:incl}
		(\D_\epsilon\cap(\S^{q-1}\times\R))+ \epsilon \{e^{q+1}\} \supseteq\D^\prime\cap(\S^{q-1}\times\R)
		\end{equation}
		holds. Indeed, observe that
		\begin{align}
		&(\D_\epsilon \cap (\S^{q-1}\times\R)) + \epsilon \{e^{q+1}\}\notag \\
		&=\{(w^\T,\alpha)^\T \in \R^{q+1} \mid \norm{w}_\ast=1,\ \forall y \in \P_\epsilon\colon \varphi(y,w,\alpha-\epsilon)\geq 0\} \notag \\
		&=\{(w^\T,\alpha)^\T \in \R^{q+1} \mid \norm{w}_\ast=1, \ \forall y \in \P^\prime,\ \forall \gamma \in B(0,1)\colon \varphi(y+\gamma\epsilon,w,\alpha-\epsilon)\geq 0\}.\notag 
		\end{align}
		On the other hand, we have
		\begin{equation*}
		\D^\prime\cap(\S^{q-1}\times\R)=\{(w^\T,\alpha)^\T \in \R^{q+1} \mid \norm{w}_\ast=1,\ \forall y \in \P^\prime\colon \varphi(y,w,\alpha)\geq 0\}.
		\end{equation*}
		Let $(w^\T,\alpha)^\T\in\D^\prime\cap(\S^{q-1}\times\R)$ be arbitrary. Note that $\varphi(y,w,\alpha)=w^\T y-\alpha\geq0 $ for every $y\in \P^\prime$ and $\norm{w}_\ast=1$. Moreover, for every $\gamma \in B(0,1)$ we have $\abs{w^\T\gamma} \leq \norm{\gamma}\norm{w}_\ast \leq 1$, hence $w^\T\gamma \geq -1$.
		Then, $\varphi(y+\gamma\epsilon,w,\alpha-\epsilon) = w^\T y+\epsilon w^\T\gamma-\alpha+\epsilon\geq0 $ holds, that is, $(w^\T,\alpha)^\T\in (\D_\epsilon\cap(\S^{q-1}\times\R))+ \epsilon \{e^{q+1}\}$, which implies \eqref{eq:incl}. 
		
		The next step is to show that
		\begin{equation}\label{eq:Dprime}
		\cone(\D^\prime\cap(\S^{q-1}\times\R))-K=\D^\prime.
		\end{equation} 
		The inclusion $\subseteq$ is straightforward. 
		For the reverse inclusion, let $( w^\T,\alpha)^\T \in \D^\prime$. First, suppose that $w\neq 0$. Let $\lambda\coloneqq\frac{1}{\|w\|_\ast}>0 $. Clearly, $\norm{\lambda w}_\ast=1 $ and $\varphi(y,\lambda w,\lambda \alpha)=\lambda w^\T y-\lambda\alpha\geq0 $ holds for each $y\in\P^\prime $. Hence, $(\lambda w^\T,\lambda\alpha)^\T\in \D^\prime\cap(\S^{q-1}\times\R) $ and $(w^\T,\alpha)^\T \in \cone(\D^\prime\cap(\S^{q-1}\times\R))-K $. Now, suppose that $w = 0$. By the definition of $\D^\prime$, we have $\alpha\leq 0$.
		\rev{Note that $0\in\cone(\D^\prime\cap(\S^{q-1}\times\R)) $ and $(w,\alpha)=(0,\alpha)\in\cone(\D^\prime\cap(\S^{q-1}\times\R))-K$.} Therefore, \eqref{eq:Dprime} holds. 
		
		Finally, \eqref{eq:inclzero}, \eqref{eq:incl} and \eqref{eq:Dprime} imply $\cone((\D_\epsilon \cap (\S^{q-1}\times\R))+ \epsilon \{e^{q+1}\})- K \supseteq\D$.
	\end{proof}
	
	\begin{proposition}\label{prop:12}
		If the primal algorithm terminates, then it returns a finite $\epsilon$-solution $\bar{\W} $ to \eqref{D}.
	\end{proposition}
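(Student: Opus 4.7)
The plan is to verify the three requirements of \Cref{def:solnDual} for $\bar{\W}$ as returned by \Cref{alg1}. The normalization in line~10 guarantees $\bar{\W}\subseteq \W\cap\S^{q-1}$, and each $w\in\bar{\W}$ is nonzero since $\|w\|_\ast=1$, so \Cref{prop:WS_Kmax} ensures that every $w\in\bar{\W}$ is a maximizer. The nontrivial part is the containment $\tilde{\D}\coloneqq\cone(\conv\xi(\bar{\W})+\epsilon\{e^{q+1}\})-K\supseteq\D$, which I would establish by a Minkowski-difference argument that crosses back to the dual side via \Cref{cor:special}(b).

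First I would analyze the final outer approximation $\P_k$. Since $\H(\xi(w))=\H(\xi(\lambda w))$ for every $\lambda>0$, the halfspaces assembled by the algorithm are exactly $\{\H(\xi(w))\mid w\in\bar{\W}\}$, so $\P_k=\{y\in\R^q\mid w^\T y\geq p^w\text{ for all }w\in\bar{\W}\}$. Because $\bar{\W}$ contains the generators $w^1,\dots,w^J$ of $C^+$, one obtains $\rec \P_k=C^{++}=C$, so $\P_k$ is line-free and admits the vertex-cone representation $\P_k=\conv V_k+C$, where $V_k$ is its finite vertex set. For every $v\in V_k$, termination forces $\|z^v\|\leq\epsilon$, while the feasibility constraint of \eqref{PS(v)} yields $v+z^v\in f(\X)+C=\P$ by \Cref{prop:upperimage}. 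Hence $V_k\subseteq\P+B(0,\epsilon)$, and taking convex combinations together with $\P+C=\P$ produces $\P_k\subseteq\P+B(0,\epsilon)$.

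Next I would translate this containment to the dual side. The identity $(\P+B(0,\epsilon))\ominus B(0,\epsilon)=\P$, valid for closed convex $\P$ via a support-function argument, combined with the monotonicity of Minkowski difference in its first argument, yields $\P_k\ominus B(0,\epsilon)\subseteq\P$. A direct computation using $\|w\|_\ast=1$ for each $w\in\bar{\W}$ shows that $\P_k\ominus B(0,\epsilon)=\{y\in\R^q\mid w^\T y\geq p^w+\epsilon\text{ for all }w\in\bar{\W}\}$, and the right-hand side coincides with $\P_{\tilde{\D}}$ by the same simplification carried out in the proof of \Cref{cor:special}(b). Therefore $\P_{\tilde{\D}}\subseteq\P$. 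Since $0\notin\bar{\W}$ and $\bar{\W}$ is finite, \Cref{cor:special}(b) then gives $\tilde{\D}=\D_{\P_{\tilde{\D}}}$; the anti-monotonicity of the map $\D_{(\cdot)}$ and \Cref{prop:DPequalsD}(a) finally yield $\tilde{\D}=\D_{\P_{\tilde{\D}}}\supseteq\D_\P=\D$, so $\bar{\W}$ is a finite $\epsilon$-supremizer.

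The main obstacle is the step $\P_k\subseteq\P+B(0,\epsilon)\Rightarrow\P_{\tilde{\D}}\subseteq\P$. A direct attempt to realize an arbitrary $(\hat{w}^\T,\hat\alpha)^\T\in\D$ as an element of $\tilde{\D}$ by decomposing $\hat{w}=\sum_i\lambda_i w^i$ with $w^i\in\bar{\W}$ founders, since the only bound that supplies is $p^{\hat w}\geq\sum_i\lambda_i p^{w^i}$, which is not enough to control $\hat\alpha$ against $\sum_i\lambda_i(p^{w^i}+\epsilon)$. The detour through the primal outer approximation $\P_k$, the representation $\P_k=\conv V_k+C$ (which crucially uses that the generators of $C^+$ belong to $\bar{\W}$), and the set-theoretic identity for $\ominus$ on closed convex sets sidesteps this difficulty; then \Cref{cor:special}(b) converts the primal containment into the desired dual one.
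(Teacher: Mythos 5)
Your argument is correct in substance, but it reaches the key containment $\cone(\conv \xi(\bar{\W})+\epsilon\{e^{q+1}\})-K\supseteq\D$ by a genuinely different route than the paper. The paper first proves a separate lemma (\Cref{prop:9}) showing that a finite weak $\epsilon$-solution $\bar{\X}$ of \eqref{P} induces the dual approximation property, via a dual-side computation that converts the primal dilation by $B(0,\epsilon)$ into the vertical shift by $\epsilon e^{q+1}$ for unit-dual-norm normals; it then sandwiches $\bar{\D}=\cone\conv\xi(\bar{\W})-K$ between $\D_{\P_\epsilon}$ and $\D$ and normalizes onto $\S^{q-1}\times\R$ at the end. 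You instead stay on the primal side: you deduce $\P_k\subseteq\P+B(0,\epsilon)$ from the termination criterion, apply the cancellation identity $(\P+B(0,\epsilon))\ominus B(0,\epsilon)=\P$ for closed convex sets, identify $\P_{\tilde{\D}}$ as a Minkowski erosion, and only then cross to the dual side via \Cref{cor:special}(b) and the anti-monotonicity of $\D_{(\cdot)}$. The underlying estimate (that $\norm{w}_\ast=1$ makes the support value move by exactly $\epsilon$ under dilation by $B(0,\epsilon)$) is the same in both proofs, but your packaging through erosion and \Cref{cor:special}(b) avoids the separate lemma and is arguably cleaner.

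One inaccuracy should be repaired: the final outer approximation is \emph{not} $\{y\in\R^q\mid w^\T y\geq p^w \text{ for all } w\in\bar{\W}\}$. Line 10 of \Cref{alg1} adds $w^v/\norm{w^v}_\ast$ to $\bar{\W}$ for \emph{every} explored vertex, whereas the halfspace $\H(\xi(w^v))$ is intersected into the approximation only when $\norm{z^v}>\epsilon$; this is precisely why the paper's proof introduces the auxiliary set $\bar{\bar{\W}}\subseteq\bar{\W}$. Consequently $\P_k$ is in general a strict superset of $\bar{\P}\coloneqq\{y\in\R^q\mid w^\T y\geq p^w,\ \forall w\in\bar{\W}\}$, and your claimed equality $\P_k\ominus B(0,\epsilon)=\P_{\tilde{\D}}$ should be the chain $\P_{\tilde{\D}}=\bar{\P}\ominus B(0,\epsilon)\subseteq\P_k\ominus B(0,\epsilon)$, using the monotonicity of $\ominus$ in its first argument. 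Since this is exactly the direction you need, namely $\P_{\tilde{\D}}\subseteq\P_k\ominus B(0,\epsilon)\subseteq\P$, the proof survives, but the step as currently written asserts a false identity.
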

	
	\begin{proof}
		By the structure of the algorithm, $\bar{\W}\neq\emptyset$ and it consists of maximizers by \Cref{prop:WS_Kmax}. Also the inclusion $\bar{\W}\subseteq\W\cap \S^{q-1}$ holds, see line 10 of \Cref{alg1}. To prove the statement, it is sufficient to show that $\D \subseteq  \cone(\conv \xi(\bar{\W}) + \epsilon \{e^{q+1}\} ) - K$. 
		
		
		Let $\bar{\D}\coloneqq\cone \conv\xi(\bar{\W})-K $, $\bar{\P}\coloneqq\P_{\bar{\D}}$. 
		By \Cref{lem:6}, 
		$
		\bar{\P}= \{y \in \R^q \mid \forall(w^\T,\alpha)^\T \in \xi(\bar{\W})\colon \varphi(y,w,\alpha)\geq0 \}.
		$ 
		Though not part of the original algorithm, we introduce an alternative for $\bar{\W}$ that is updated only when the current vertex $v$ is sufficiently far from the upper image, i.e., when $\norm{z^v}>\epsilon$. More precisely, let us introduce a set $\bar{\bar{\W}}$ that is initialized as $\bar{\bar{\W}}=\{w^1, \ldots, w^J\}$ in line 3 of \Cref{alg1} and updated as $\bar{\bar{\W}}\gets\bar{\bar{\W}} \cup \{\frac{w^v}{\|w^v\|_\ast}\}$ in line 12 throughout the algorithm. Observe that $\bar{\bar{\W}} \subseteq \bar{\W}$. 
		
		Suppose that the algorithm terminates at the $\bar{k}^{\text{th}}$ iteration and let $\P_{\bar{k}}$ denote the resulting outer approximation of the upper image. By the construction of $\bar{\bar{\W}}$, 
		$
		\P_{\bar{k}}= \{y \in \R^q \mid \forall(w^\T,\alpha)^\T \in\xi(\bar{\bar{\W}})\colon \varphi(y,w,\alpha)\geq0 \}.
		$ 
		Since $\bar{\bar{\W}} \subseteq \bar{\W}$, we get $\bar{\P}\subseteq \P_{\bar{k}}$.
		
		Let us define
		$
		\P_\epsilon\coloneqq \conv f(\bar{\X})+C+B(0,\epsilon).
		$ 
		By the structure and the termination criterion of the algorithm, for every vertex $v\in\V_{\bar{k}}$ of $\P_{\bar{k}}$, the scalarization problem \eqref{PS(v)} is solved; $x^v$ is added to $\bar{\X}$; and we have $\norm{z^v}\leq\epsilon$. Moreover, $v+z^v \in \{f(x^v)\}+C \subseteq \conv f(\bar{\X}) +C$ holds for every $v\in \V_{\bar{k}}$. Thus, $\V_{\bar{k}} \subseteq \P_\epsilon$. From \cite[Lemma 5.2]{umer_2022}, the recession cone of $\P_{\bar{k}}$ is the ordering cone; hence, $\P_{\bar{k}} = \conv \V_{\bar{k}}+C$. Moreover, $\P_\epsilon$ is a convex upper set. As a result, we have $\P_{\bar{k}} \subseteq \P_\epsilon$. Together with $\bar{\P}\subseteq \P_{\bar{k}}$, the last inclusion implies that $\bar{\P} \subseteq \P_\epsilon$.
		
		
		Define $\D_\epsilon\coloneqq \D_{\P_\epsilon}=\{(w^\T,\alpha)^\T \in \R^{q+1} \mid \forall y \in \P_\epsilon\colon  \varphi(y,w,\alpha)\geq0\} $. By \rev{\Cref{cor:special}(a)} and since $\bar{\P} \subseteq \P_\epsilon$, we have $\bar{\D}\supseteq \D_\epsilon$. Moreover, using \Cref{prop:11} and \Cref{prop:9}, $\cone((\D_\epsilon\cap (\S^{q-1}\times\R))+ \epsilon \{e^{q+1}\} )- K \supseteq\D$. With $\bar{\D}\supseteq\D_\epsilon$, this implies 
		\begin{equation}\label{eq:prop12_0}
		\cone\left(\left(\bar{\D}\cap (\S^{q-1}\times\R)\right)+ \epsilon \{e^{q+1}\} \right)- K \supseteq\D.
		\end{equation}
		It is straightforward to check that 
		$
		(\bar{\D}\cap (\S^{q-1}\times\R))+ \epsilon \{e^{q+1}\} \subseteq\cone(\conv \xi(\bar{\W})+\epsilon \{e^{q+1}\} ) - K,
		$ 
		which implies that 
		\[
		\cone\left(\left(\bar{\D}\cap (\S^{q-1}\times\R)\right)+ \epsilon \{e^{q+1}\} \right)- K\subseteq\cone(\conv \xi(\bar{\W})+\epsilon \{e^{q+1}\} ) - K.
		\]
		By \eqref{eq:prop12_0}, $\cone(\conv \xi(\bar{\W})+\epsilon \{e^{q+1}\}) - K  \supseteq \D$; hence, $\bar{\W} $ is a finite $\epsilon$-solution to \eqref{D}.
		%
		%
		%
	\end{proof}

	\subsection{Dual algorithm} \label{subsect:dualalg}
	
	In this section, we describe a geometric dual algorithm for solving problems \eqref{P} and \eqref{D}. The main idea is to construct outer approximations of the lower image $\D$, iteratively. Recall that, by \Cref{prop:Dcone}, $\D$ is a closed convex cone; similarly, we will see that the outer approximations found through the iterations of the dual algorithm are polyhedral convex cones.
	
	The dual algorithm (\Cref{alg2}) is initialized by solving a weighted sum scalarization for some weight vector from $\Int C^+$. In particular, we solve ($\text{WS}(w^0)$) by taking 
	$w^0\coloneqq \sum_{j=1}^Jw^j / \|\sum_{j=1}^Jw^j\|_\ast$.
	By Propositions \ref{prop:WS} and \ref{prop:WS_Kmax}, an optimal solution $x^0$ of ($\text{WS}(w^0)$) is a weak minimizer for \eqref{P} and $w^0$ is a maximizer for \eqref{D}. Hence, we set $\bar{\X} = \{x^0\}, \bar{\W} = \{w^0\}$. Moreover, using \Cref{prop:supp_halfsp} and the definition of the lower image $\D$, we define the initial outer approximation of $\D$ as
	$\D_0\coloneqq \H^\ast(f(x^0))\cap (C^+ \times \R) \supseteq \D$; 
	see lines 1-3 of \Cref{alg2}. Note that $\D_0$ satisfies $\D_0 = \D_0 - K$ and, under \Cref{ass:1}, it is a polyhedral convex cone.
	
	Throughout the algorithm, weighted sum scalarizations will be solved for some weight vectors from $\W$. In order to keep track of the already used $w\in\W$, we keep a list $\Wkn$ and initialize it as the empty set. 
	
	In each iteration $k$, first, the set $\D^{\text{dir}}_k$ of extreme directions of the current outer approximation $\D_k$ is computed (line 6). The extreme directions in $\D^{\text{dir}}_k \setminus (\{0\}\times\R)$ are normalized such that $\norm{w}_\ast = 1$, and $\D^{\text{dir}}_k$ is updated such that it is a subset of $\S^{q-1} \times \R$ (line 7). It will be seen that $\D_k$ is constructed by intersecting $\D_0$ with a set of halfspaces of the form $\H^\ast (f(x))$, where $x$ is a weak minimizer. Then, by the definitions of $\D_0$ and $\H^\ast (\cdot)$, it is clear that $\D_k = \D_k -K$; moreover, $e^{q+1}$ is not a recession direction for $\D_k$. 
	Hence, we have
	\begin{equation}\label{eq:Dk-K}
	\D_k = \cone\conv (\D^{\text{dir}}_k\cup\{-e^{q+1}\}) = \cone\conv \D^{\text{dir}}_k - K.
	\end{equation}
	
	For each unknown extreme direction $(w^\T,\alpha)^\T$, 
	an optimal solution $x^w$ and the optimal value $p^w$ of \eqref{WS(w)} are computed (lines 8-10). Recall that $x^w$ is a weak minimizer by \Cref{prop:WS} and $\xi(w)=(w,p^w)$ is a $K$-maximal element of $\D$ by \Cref{prop:WS_Kmax}; we update $\bar{\X}$ and $\bar{\W}$ accordingly, and add $(w^\T,\alpha)^\T$ to the set of known extreme directions (line 11). Since $(w^\T,\alpha)^\T\in \D_k$ and $\D_k\supseteq\D$, we have $\alpha \geq p^w$. If the difference between $\alpha$ and $p^w$ is greater than the allowed error $\epsilon$, then the supporting halfspace $\H^\ast(f(x^w))$ of $\D$ is computed and stored. Once all unknown extreme directions are explored, the current outer approximation is updated by using the stored supporting halfspaces (lines 12-14 and 17). The algorithm terminates when every unknown extreme direction $(w^\T,\alpha)^\T$ is sufficiently close to the lower image in terms of the ``vertical distance" $\alpha-p^w$ (line 19).

	\setcounter{algo}{2}
	\begin{algorithm}[h]
		\caption{Dual algorithm}
		\begin{algorithmic} [1] 
			\STATE Compute an optimal solution $x^0$ to $(\text{WS}(w^0))$ for $w^0=\frac{\sum_{j=1}^Jw^j}{\norm{\sum_{j=1}^Jw^j}_\ast}$;
			\STATE 
			Let $\D_0=\H^\ast(f(x^0))\cap (C^+ \times \R)$; 
			\STATE $k \gets 0, \bar{\X}\gets\{x^0\}, \bar{\W}\gets\{w^0\}, \Dkn=\emptyset$; 
			\REPEAT
			\STATE $M\gets\R^{q+1}$;
			\STATE Compute the set $\D^{\text{dir}}_k$ of extreme directions of $\D_k$;
			\STATE 
			$\D^{\text{dir}}_k\gets\{\frac{(w^\T,\alpha)^\T}{\norm{w}_\ast} \in\R^{q+1} \mid (w^\T,\alpha)^\T\in \D^{\text{dir}}_k \setminus (\{0\}\times \R)\}$; 
			\STATE $\Dunkn\gets\D^{\text{dir}}_k\setminus\Dkn$;
			\FOR {$(w^\T,\alpha)^\T\in\Dunkn$}
			\STATE 
			Compute an optimal solution $x^w$ to \eqref{WS(w)} and let $p^w\coloneqq w^\T f(x^w)$;
			\STATE $\bar{\X} \gets \bar{\X} \cup \{x^w\}$, $\bar{\W} \gets \bar{\W} \cup \{{w}\}$, $\Dkn \gets \Dkn \cup \{(w^\T,\alpha)^\T\}$; 
			\IF {$\alpha-p^w > \epsilon$}
			\STATE $M \gets M \cap \H^\ast(f(x^w))$;
			\ENDIF 
			\ENDFOR
			\IF {$M\neq\R^{q+1}$}
			\STATE $\D_{k+1}=\D_k\cap M, k \gets k+1$;
			\ENDIF
			\UNTIL $M=\R^{q+1}$
			\RETURN 		
			$\left\{\begin{array}{ll}\bar{\X}&\text{: A finite weak }\tilde{\epsilon} \text{-solution to \eqref{P}};   \\
			\bar{\W}&\text{: A finite }\epsilon \text{-solution to \eqref{D}}; 
			\end{array}\right.	$
		\end{algorithmic}
		\label{alg2}
	\end{algorithm}
	
	\begin{remark}
		A `break' command can be placed between lines 13 and 14 in the algorithm. With the current version, the algorithm goes through all the extreme directions of the current outer approximation without updating it. With the `break' command, the algorithm would update the outer approximation as soon as it detects an extreme direction $w$ with $\alpha - p^w >\epsilon$.
	\end{remark}
	
	\rev{The following lemma will be useful in proving Propositions \ref{prop:5} and \ref{prop:6}.}
	\rev{\begin{lemma} \label{lem:D_inclusions}
			For $\epsilon>0$, suppose that the algorithm terminates at the $\bar{k}^{\text{th}}$ iteration, returns sets $\bar{\X}, \bar{\W}$. Let $
			\bar{\P}\coloneqq \conv f(\bar{\X})+C,$
			$\bar{\D}\coloneqq\D_{\bar{\P}}
			$ 
			and
			$
			\D_\epsilon\coloneqq\cone(\conv\xi(\bar{\W})+\epsilon \{e^{q+1}\})-K.
			$ Then, $\D_\epsilon \supseteq \D_{\bar{k}} \supseteq\bar{\D}\supseteq\D$, where $\D_{\bar{k}}$ denotes the resulting outer approximation of $\D$ at termination.
		\end{lemma}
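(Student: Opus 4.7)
The plan is to prove the three inclusions in the chain from the innermost outward, using primal-side containments together with \Cref{prop:DPequalsD}, the structure of \Cref{alg2}, and the representation \eqref{eq:Dk-K}.

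For $\bar{\D}\supseteq\D$: since $\bar{\X}\subseteq\X$ and $\P$ is a convex upper set containing $f(\X)$ by \Cref{prop:upperimage}, we have $\bar{\P}=\conv f(\bar{\X})+C\subseteq\P$. The map $A\mapsto\D_A$ is manifestly inclusion-reversing from its definition in \Cref{def:PD-DP}, so $\bar{\D}=\D_{\bar{\P}}\supseteq\D_{\P}=\D$, where the final equality is \Cref{prop:DPequalsD}(a).

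For $\D_{\bar{k}}\supseteq\bar{\D}$: the outer approximation $\D_{\bar{k}}$ is obtained as the intersection of $\D_0=\H^\ast(f(x^0))\cap(C^+\times\R)$ with finitely many halfspaces $\H^\ast(f(x^w))$ introduced in line 13, and every such $x^w$ together with $x^0$ has been placed in $\bar{\X}$ by lines 3 and 11. Therefore each $f(x)$ used in the construction lies in $\bar{\P}$, and the defining inequality of $\bar{\D}=\D_{\bar{\P}}$ yields $\bar{\D}\subseteq\H^\ast(f(x))$ for every such point. To absorb the factor $C^+\times\R$ of $\D_0$, I would apply \Cref{rem:w_inC+} to $\bar{\P}=\bar{\P}+C$: any $(w^\T,\alpha)^\T\in\bar{\D}$ satisfies $\inf_{y\in\bar{\P}}w^\T y\geq\alpha>-\infty$, forcing $w\in C^+$. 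Intersecting these containments gives $\bar{\D}\subseteq\D_{\bar{k}}$.

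The main difficulty lies in the outermost inclusion $\D_\epsilon\supseteq\D_{\bar{k}}$, which is the only step invoking the termination criterion. By \eqref{eq:Dk-K}, $\D_{\bar{k}}=\cone\conv\D^{\text{dir}}_{\bar{k}}-K$; since $\D_\epsilon$ is a convex cone containing $-K$, it suffices to show $\D^{\text{dir}}_{\bar{k}}\subseteq\D_\epsilon$. Fix a normalized extreme direction $(w^\T,\alpha)^\T\in\D^{\text{dir}}_{\bar{k}}\subseteq\S^{q-1}\times\R$. By lines 8 and 11, this pair was processed at some iteration $k\leq\bar{k}$ and the corresponding $w$ was appended to $\bar{\W}$. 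If $\alpha-p^w>\epsilon$ had held at that iteration, line 13 would have added the halfspace $\H^\ast(f(x^w))$; since $(w^\T,\alpha)^\T\in\H^\ast(f(x^w))$ is equivalent to $\alpha\leq p^w$, the pair would have been excluded from $\D_{k+1}$ and from every later approximation, contradicting either $(w^\T,\alpha)^\T\in\D^{\text{dir}}_{\bar{k}}$ when $k<\bar{k}$ or termination at iteration $\bar{k}$ when $k=\bar{k}$. Hence $\alpha-p^w\leq\epsilon$, so $(w^\T,\alpha)^\T=\xi(w)+(\alpha-p^w)e^{q+1}\in\xi(w)+\epsilon\{e^{q+1}\}-K\subseteq\D_\epsilon$, completing the proof.
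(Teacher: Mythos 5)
Your proof is correct and follows essentially the same route as the paper's: $\bar{\D}\supseteq\D$ via the inclusion-reversing map $A\mapsto\D_A$ and \Cref{prop:DPequalsD}, $\D_{\bar{k}}\supseteq\bar{\D}$ by noting that every point $f(x)$ defining a cutting halfspace lies in $\bar{\P}$, and $\D_{\bar{k}}\subseteq\D_\epsilon$ via \eqref{eq:Dk-K} and the termination criterion forcing $\alpha-p^w\leq\epsilon$ for every normalized extreme direction. Your treatment is in fact slightly more careful than the paper's in two spots — explicitly absorbing the $C^+\times\R$ factor of $\D_0$ via \Cref{rem:w_inC+}, and justifying $\alpha-p^w\leq\epsilon$ separately for directions processed before versus at the final iteration — but these are refinements of the same argument, not a different one.
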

		\begin{proof}
			First observe that $\bar{\P} \subseteq \P$ implies $\bar{\D} \supseteq \D$ from \Cref{prop:DPequalsD}. Moreover, $\bar{\D}\subseteq \{(w^\T,\alpha)^\T \in \R^{q+1}\mid \forall y \in f(\bar{\X})\colon \varphi(y,w,\alpha)\geq 0 \} $ holds since $\bar{\P} \supseteq f(\bar{\X})$. 
			Consider an artificial set $\bar{\bar{\X}}$ that is initialized in line 3 of \Cref{alg2} as the empty set, and updated in line 13 as $\bar{\bar{\X}} \gets \bar{\bar{\X}} \cup \{x^w\}$. By the definition of $\bar{\bar{\X}}$, we have
			$
			\D_{\bar{k}}=\{(w^\T,\alpha)^\T \in \R^{q+1}\mid\forall y \in f(\bar{\bar{\X}})\colon \varphi(y,w,\alpha)\geq 0\}
			$ 
			and $\bar{\bar{\X}} \subseteq \bar{\X}$. Therefore, $\D_{\bar{k}}\supseteq \{(w^\T,\alpha)^\T \in \R^{q+1}\mid \forall y \in f(\bar{\X})\colon \varphi(y,w,\alpha)\geq 0 \}$. 
			Note that for every $(w^\T,\alpha)^\T\in\D^{\textnormal{dir}}_{\bar{k}} \subseteq \S^{q-1}\times \R$, the scalarization problem \eqref{WS(w)} is solved; $w$ is added to $\bar{\W}$ and we have $\alpha - p^w\leq\epsilon$. Then, we have $\D^{\textnormal{dir}}_{\bar{k}}\subseteq  \xi(\bar{\W})+\epsilon\{e^{q+1}\}-K$. On the other hand, $\D_{\bar{k}} = \cone \conv\D^{\textnormal{dir}}_{\bar{k}} - K$, see \eqref{eq:Dk-K}. These imply
			$
			\D_{\bar{k}} \subseteq \cone \conv (\xi(\bar{\W}) + \epsilon\{e^{q+1}\})-K = \D_{\epsilon}.
			$ 
			Therefore,
			$\bar{\D} \subseteq \{(w^\T,\alpha)^\T \in \R^{q+1}\mid \forall y \in f(\bar{\X})\colon \varphi(y,w,\alpha)\geq 0 \} \subseteq \D_{\bar{k}} \subseteq \D_\epsilon.$
		\end{proof}
	}

	Next, we prove that the dual algorithm returns a finite $\epsilon$-solution to problem \eqref{D}. 

	
	%
	
	\begin{proposition}\label{prop:5}
		If \Cref{alg2} terminates, then it returns a finite $\epsilon$-solution $\bar{\W}$ to \eqref{D}.
	\end{proposition}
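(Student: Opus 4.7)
The plan is to verify, one by one, the four conditions in \Cref{def:solnDual} that characterize a finite $\epsilon$-solution to \eqref{D}. Non-emptiness and finiteness of $\bar{\W}$ are immediate from the algorithm: line 3 adds $w^0$ at initialization, and only finitely many further elements are appended in line 11 since the algorithm is assumed to terminate in finitely many iterations, each processing finitely many extreme directions.

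Next, I would verify $\bar{\W}\subseteq \W \cap \S^{q-1}$. The element $w^0$ has unit dual norm by its definition, and each element added in line 11 is the first $q$-component of a normalized extreme direction produced in line 7. To conclude that these elements lie in $\W=C^+$, I would establish inductively the invariant $\D_k \subseteq C^+\times\R$ for all $k$: it holds for $k=0$ because $\D_0$ is explicitly intersected with $C^+\times\R$ in line 2, and it is preserved by the cone intersection $\D_{k+1}=\D_k\cap M$.

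I would then show that every element of $\bar{\W}$ is a maximizer for \eqref{D}. Since $w^0\in C^+\setminus\{0\}$, \Cref{prop:WS_Kmax} gives that $\xi(w^0)$ is a $K$-maximal element of $\D$, so $w^0$ is a maximizer. For any $w$ appended in line 11, we have $w\in C^+$ with $\|w\|_\ast=1$, hence $w\in\W\setminus\{0\}$, and \Cref{prop:WS_Kmax} applies again.

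Finally, the crucial inclusion $\cone(\conv\xi(\bar{\W})+\epsilon\{e^{q+1}\})-K\supseteq \D$ follows directly from \Cref{lem:D_inclusions}, whose conclusion $\D_\epsilon\supseteq\D$ is precisely this statement in the notation of the lemma. Since the heavy lifting has already been performed in \Cref{lem:D_inclusions}, no substantial obstacle remains; the most delicate bookkeeping step is arguably the invariant $\D_k\subseteq C^+\times\R$, which underlies membership in $\W\cap\S^{q-1}$ and the applicability of \Cref{prop:WS_Kmax} to each appended weight.
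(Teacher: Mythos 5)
Your proof is correct and follows essentially the same route as the paper's: the paper likewise disposes of non-emptiness, the inclusion $\bar{\W}\subseteq\W\cap\S^{q-1}$, and the maximizer property by appealing to the structure of the algorithm together with \Cref{prop:WS_Kmax}, and then obtains the inclusion $\D\subseteq\cone(\conv\xi(\bar{\W})+\epsilon\{e^{q+1}\})-K$ directly from \Cref{lem:D_inclusions}. The only difference is that you spell out the bookkeeping (the invariant $\D_k\subseteq C^+\times\R$ and the normalization in line 7) that the paper leaves implicit.
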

	
	\begin{proof}
		\rev{By the structure of the algorithm and \Cref{prop:WS_Kmax}, $\bar{\W} $ is nonempty and consists of maximizers. Also the inclusion $\bar{\W}\subseteq\W\cap \S^{q-1} $ holds. From \Cref{lem:D_inclusions}, we have 
			$\D \subseteq  \cone(\conv\xi(\bar{\W})+\epsilon \{e^{q+1}\}) - K$, which implies that $\bar{\W}$ is a finite $\epsilon$-solution to \eqref{D}.}
	\end{proof}

	The next step is to prove that the algorithm returns also a solution to the primal problem \eqref{P}. In order to prove this result, the following lemma and propositions will be useful. To that end, for each $n\in\{2,3,\ldots\}$, let us define 
	\[
	\Delta^{n-1}\coloneqq\bigg\{\lambda\in\R^n_+ \mid \sum_{j=1}^n \lambda_j = 1\bigg\},\qquad \Delta_+^{n-1}\coloneqq\bigg\{\lambda\in\R^n_+ \mid \sum_{j=1}^n \lambda_j \geq 1\bigg\}.
	\]

	\begin{lemma}\label{lem:5}
		Let $d^1,\dots,d^n $ be the generating vectors of a pointed convex \rev{polyhedral} cone $D\subseteq\R^q$, where $n\geq 2$.
		(a) For every $\lambda\in\Delta^{n-1}$, $\sum_{j=1}^{n}\lambda_jd^j \neq 0$ holds.
		(b) It holds $\min_{\lambda\in\Delta^{n-1}} \|\sum_{j=1}^{n}\lambda_jd^j\|_\ast > 0$.
		(c) It holds
		$\min_{\lambda\in\Delta^{n-1}} \|\sum_{j=1}^{n}\lambda_jd^j\|_\ast =\min_{\lambda\in\Delta_+^{n-1}} \|\sum_{j=1}^{n}\lambda_jd^j\|_\ast$. 
	\end{lemma}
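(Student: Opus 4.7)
For part (a), the plan is to exploit pointedness of $D$ directly. Suppose for contradiction that $\sum_{j=1}^{n}\lambda_j d^j = 0$ for some $\lambda\in\Delta^{n-1}$. Since the components of $\lambda$ are nonnegative and sum to $1$, at least one index $j_0$ has $\lambda_{j_0}>0$. Rearranging the zero sum yields
\[
d^{j_0} \;=\; -\frac{1}{\lambda_{j_0}}\sum_{j\neq j_0}\lambda_j\, d^j.
\]
The right-hand side lies in $-D$ because $\sum_{j\neq j_0}\lambda_j d^j\in D$ (conic combination of generators) and $-D$ is closed under nonnegative scalar multiplication. On the other hand, $d^{j_0}\in D$. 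Pointedness of $D$ then forces $d^{j_0}=0$, contradicting the fact that $d^{j_0}$ is a (nonzero) generator of $D$.

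For part (b), the plan is a standard compactness argument. The map $\lambda\mapsto \norm{\sum_{j=1}^{n}\lambda_j d^j}_\ast$ is continuous on the compact simplex $\Delta^{n-1}$, so it attains a minimum at some $\lambda^\ast\in\Delta^{n-1}$. By part (a), $\sum_{j=1}^{n}\lambda^\ast_j d^j\neq 0$, so the minimum value is strictly positive.

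For part (c), the inclusion $\Delta^{n-1}\subseteq \Delta_+^{n-1}$ immediately yields one direction, namely $\min_{\Delta_+^{n-1}}\leq \min_{\Delta^{n-1}}$ (and the minimum on the unbounded set $\Delta_+^{n-1}$ is attained since it is dominated by the attained minimum over $\Delta^{n-1}$). For the reverse direction, given any $\lambda\in\Delta_+^{n-1}$, set $s\coloneqq \sum_{j=1}^{n}\lambda_j\geq 1$ and $\lambda'\coloneqq \lambda/s$. Then $\lambda'\in \Delta^{n-1}$ and by positive homogeneity of the dual norm,
\[
\Bigl\|\sum_{j=1}^{n}\lambda'_j d^j\Bigr\|_\ast \;=\; \tfrac{1}{s}\,\Bigl\|\sum_{j=1}^{n}\lambda_j d^j\Bigr\|_\ast\;\leq\; \Bigl\|\sum_{j=1}^{n}\lambda_j d^j\Bigr\|_\ast,
\]
since $s\geq 1$. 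Taking the infimum over $\lambda\in\Delta_+^{n-1}$ on the right yields $\min_{\Delta^{n-1}}\leq \min_{\Delta_+^{n-1}}$, completing the equality.

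I do not anticipate a significant obstacle: part (a) reduces to a one-line pointedness argument, part (b) is a routine compactness/continuity argument that uses (a), and part (c) is a normalization trick exploiting positive homogeneity of $\norm{\cdot}_\ast$. The only subtlety worth stating carefully is the justification of the contradiction in (a), namely that $d^{j_0}$ as a generator of a pointed cone is nonzero; this should be made explicit in the writeup.
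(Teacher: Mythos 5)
Your proof is correct and follows essentially the same route as the paper's: the pointedness contradiction for (a), compactness plus continuity for (b), and the normalization $\lambda'=\lambda/\sum_j\lambda_j$ together with positive homogeneity of $\norm{\cdot}_\ast$ for (c). Your explicit remark that the generators of a pointed cone are nonzero (needed to close the contradiction in (a)) is a worthwhile clarification that the paper leaves implicit.
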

	
	\begin{proof}
		To prove (a), let $\lambda\in\Delta^{n-1}$. Assume to the contrary that $\sum_{j=1}^{n}\lambda_jd^j=0$. Since $\sum_{j=1}^{n}\lambda_j=1 $, we have $\lambda_j > 0$ for at least one $j\in\{1,\ldots,n\}$. Without loss of generality, we may assume that $j=1$. Hence, 
		$
		\frac{1}{\lambda_1}\sum_{j=2}^{n}\lambda_jd^j=-d^1.
		$ 
		Since $D $ is a convex cone, $\frac{1}{\lambda_1}\sum_{j=2}^{n}\lambda_jd^j \in D$. Therefore, $d^1,-d^1\in D $, contradicting the pointedness of $D$ \cite[Section 2.4.1]{boyd_2004}. Therefore, $\sum_{j=1}^{n}\lambda_jd^j \neq 0$.
		
		By (a), 
		$\|{\sum_{j=1}^{n}\lambda_jd^j}\|_\ast>0$ for every $\lambda\in\Delta^{n-1}$. Since the feasible set $\Delta^{n-1}$ is compact, the minimum of the continuous function $\lambda\mapsto \|\sum_{j=1}^n \lambda_j d^j\|_\ast$ is attained at some $\bar{\lambda}\in \Delta^{n-1}$. Hence, the minimum $ \|\sum_{j=1}^n \bar{\lambda}_j d^j\|_\ast$ is also strictly positive. This proves (b).
		
		Finally, we prove (c). Since $\Delta^{n-1}\subseteq\Delta^{n-1}_+$, we have
		$
		\min_{\lambda\in\Delta^{n-1}} \|\sum_{j=1}^{n}\lambda_jd^j\|_\ast \geq \inf_{\lambda\in\Delta_+^{n-1}} \|\sum_{j=1}^{n}\lambda_jd^j\|_\ast.
		$ 
		To prove the reverse inequality, assume to the contrary that there exists $\bar{\lambda}\in\Delta^{n-1}_+$ such that
		$\|\sum_{j=1}^{n}\bar{\lambda}_jd^j\|_\ast<\min_{\lambda \in\Delta^{n-1}}\|\sum_{j=1}^n\lambda_j d^j\|_\ast$.
		Then, using $\bar{\lambda}\in\Delta^{n-1}_+$, we have
		\begin{equation*}
		\norm{\sum_{j=1}^{n}\bar{\lambda}_jd^j}_\ast \geq \frac{1}{\sum_{i=1}^{n}\bar{\lambda}_i} \norm{\sum_{j=1}^{n}\bar{\lambda}_jd^j}_\ast=\norm{\sum_{j=1}^{n}\frac{\bar{\lambda}_j}{\sum_{i=1}^{n}\bar{\lambda}_i}d^j}_\ast \geq \min_{\lambda\in\Delta^{n-1}}\norm{\sum_{j=1}^{n}\lambda_jd^j}_\ast,
		\end{equation*}
		a contradiction. 
		Therefore, 
		$
		\min_{\lambda\in\Delta^{n-1}} \|\sum_{j=1}^{n}\lambda_jd^j\|_\ast = \inf_{\lambda\in\Delta_+^{n-1}} \|\sum_{j=1}^{n}\lambda_jd^j\|_\ast.
		$ 
		Since the minimum is attained on the left and $\Delta^{n-1}\subseteq\Delta_+^{n-1}$, the infimum on the right is also a minimum.
	\end{proof}
	
	Next, we show that an inner $\tilde{\epsilon}$-approximation of the upper image $\mathcal{P} $ can be obtained by using a finite $\epsilon $-solution $\mathcal{\bar{W}} $ of \eqref{D}.
	
	\begin{proposition}\label{prop:eps_tilde_1}
		For $\epsilon>0$, let $\bar{\mathcal{W}}$ be a finite $\epsilon$-solution of \eqref{D}, and define
		$
		\D_\epsilon:= \cone(\conv \xi(\bar{\W})+\epsilon \{e^{q+1}\}) - K.
		$ 
		Then, $\P_\epsilon\coloneqq\P_{\D_\epsilon}$ is an inner approximation of $\P$ and $\P_\epsilon + B(0,\tilde{\epsilon}) \supseteq \P \supseteq \P_\epsilon $, where
		$\tilde{\epsilon}=\epsilon/\min_{\lambda\in\Delta^{J-1}}\|\sum_{j=1}^{J}\lambda_j w^j\|_\ast$.
	\end{proposition}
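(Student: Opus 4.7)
The plan is to establish the two inclusions separately, with the hard direction reducing to a minimax problem on the unit ball. For $\P_\epsilon \subseteq \P$, I would note that $\bar{\W}$ being a finite $\epsilon$-supremizer gives exactly $\D_\epsilon \supseteq \D$; combined with $\P_\D = \P$ from \Cref{prop:DPequalsD}(b) and the reverse-monotonicity of the map $\bar{\D}\mapsto \P_{\bar{\D}}$ (immediate from its definition), this yields $\P_\epsilon = \P_{\D_\epsilon} \subseteq \P_\D = \P$.

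Next, I would extract an explicit halfspace description of $\P_\epsilon$: mimicking the calculation in the proof of \Cref{cor:special}(b), the cone structure of $\D_\epsilon$ reduces the defining constraints to those coming from the extreme rays, yielding
\[
\P_\epsilon = \{y \in \R^q \mid \forall \bar{w} \in \bar{\W}\colon \bar{w}^{\T} y \geq p^{\bar{w}} + \epsilon\}.
\]
For $\P \subseteq \P_\epsilon + B(0,\tilde{\epsilon})$, fix $y \in \P$ and set $\delta_{\bar{w}} := \bar{w}^{\T} y - p^{\bar{w}} \geq 0$ for each $\bar{w} \in \bar{\W}$. Finding $y^\prime \in \P_\epsilon$ with $\|y - y^\prime\| \leq \tilde{\epsilon}$ amounts, via $v := y^\prime - y$, to producing $v$ with $\|v\| \leq \tilde{\epsilon}$ and $\bar{w}^{\T} v \geq \epsilon - \delta_{\bar{w}}$ for every $\bar{w}\in\bar{\W}$. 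Since $\delta_{\bar{w}} \geq 0$, the worst case is $\delta_{\bar{w}} = 0$, reducing the task to finding $v$ with $\|v\| \leq \tilde{\epsilon}$ and $\bar{w}^{\T} v \geq \epsilon$ for all $\bar{w} \in \bar{\W}$. Writing $\bar{\W} = \{\bar{w}^1,\dots,\bar{w}^n\}$ and applying Sion's minimax theorem on the compact convex sets $\{v \in \R^q : \|v\| \leq 1\}$ and $\Delta^{n-1}$ gives
\[
\max_{\|v\|\leq 1}\min_{i \in \{1,\dots,n\}}(\bar{w}^i)^{\T} v = \min_{\lambda \in \Delta^{n-1}}\max_{\|v\|\leq 1} \Big(\sum_{i=1}^n \lambda_i \bar{w}^i\Big)^{\!\T} v = \min_{\lambda \in \Delta^{n-1}}\Big\|\sum_{i=1}^n \lambda_i \bar{w}^i\Big\|_\ast,
\]
so a suitable $v$ with $\|v\|\leq\tilde{\epsilon}$ exists provided this quantity is at least $\epsilon/\tilde{\epsilon} = \min_{\mu \in \Delta^{J-1}} \|\sum_{j=1}^J \mu_j w^j\|_\ast$.

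The remaining inequality is where the passage from $\bar{\W}$ to the generators of $C^+$ enters. Each $\bar{w}^i \in \bar{\W} \subseteq C^+$ admits a representation $\bar{w}^i = \sum_{j=1}^J \gamma_{ij} w^j$ with $\gamma_{ij} \geq 0$, and combined with $\|\bar{w}^i\|_\ast = \|w^j\|_\ast = 1$, subadditivity of $\|\cdot\|_\ast$ forces $\sum_{j=1}^J \gamma_{ij} \geq 1$. For any $\lambda \in \Delta^{n-1}$, the weights $\beta_j := \sum_{i=1}^n \lambda_i \gamma_{ij}$ thus satisfy $\beta \in \Delta_+^{J-1}$ and $\sum_{i=1}^n \lambda_i \bar{w}^i = \sum_{j=1}^J \beta_j w^j$, so \Cref{lem:5}(c) yields $\|\sum_{j=1}^J \beta_j w^j\|_\ast \geq \min_{\mu \in \Delta^{J-1}} \|\sum_{j=1}^J \mu_j w^j\|_\ast$. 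Taking the infimum over $\lambda$ closes the argument. The main obstacle is precisely this last step: the minimax only produces a $\bar{\W}$-dependent bound, and the subadditivity-plus-\Cref{lem:5}(c) trick is what converts it into the uniform $C^+$-geometric constant $\tilde{\epsilon}$.
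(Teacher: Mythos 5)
Your proof is correct, and for the nontrivial inclusion $\P\subseteq\P_\epsilon+B(0,\tilde{\epsilon})$ it takes a genuinely different route from the paper's. The paper argues by contradiction: a point $\bar{y}\in\P\setminus(\P_\epsilon+B(0,\tilde{\epsilon}))$ is separated from $\P_\epsilon+B(0,\tilde{\epsilon})$ by some $\bar{w}$ with $\norm{\bar{w}}_\ast=1$; the resulting functional $(\bar{w}^\T,\bar{\alpha})^\T$ is identified as an element of $\D_\epsilon$ via $\D_{\P_{\D_\epsilon}}=\D_\epsilon$ (\Cref{cor:special}(b)); its conic representation is then used to show $(\bar{w}^\T,\bar{\alpha}-\tilde{\epsilon})^\T\in\D$, contradicting $\P_\D=\P$. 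You instead work constructively in the primal space: from the halfspace description of $\P_\epsilon$ you reduce the claim to producing a single $v$ with $\norm{v}\leq\tilde{\epsilon}$ and $\bar{w}^\T v\geq\epsilon$ for all $\bar{w}\in\bar{\W}$ (the same $v$ then works for every $y\in\P$, since $\delta_{\bar{w}}\geq 0$), and you obtain such a $v$ from a bilinear minimax identity on $B(0,1)\times\Delta^{n-1}$. The two arguments are essentially dual to one another, and they converge on exactly the same key quantitative step: bounding $\min_{\lambda\in\Delta^{n-1}}\norm{\sum_i\lambda_i\bar{w}^i}_\ast$ from below by $\min_{\mu\in\Delta^{J-1}}\norm{\sum_j\mu_j w^j}_\ast$ via a nonnegative decomposition over the generators of $C^+$, subadditivity of $\norm{\cdot}_\ast$ to get weights in $\Delta_+^{J-1}$, and \Cref{lem:5}(c). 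What your version buys is a direct, $y$-uniform construction of the approximating point that bypasses \Cref{cor:special}(b) and the recovery result $\P_\D=\P$ in the hard direction; what the paper's version buys is that it stays entirely within the $\P$--$\D$ transformation machinery of \Cref{sec:geomdual}, which is reused in \Cref{prop:eps_tilde_2} and \Cref{prop:6}.
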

	\begin{proof}
		Since $\bar{\W} $ is a finite $\epsilon $-solution of \eqref{D}, $\D_\epsilon $ is an outer approximation of the lower image $\D$ by Definition \ref{def:solnDual}, that is, $\D_\epsilon\supseteq\D$. Therefore, by \Cref{prop:DPequalsD}(b), the inclusion $\P\supseteq\P_\epsilon $ holds.
		
		Next, we show that $\P_\epsilon + B(0,\tilde{\epsilon}) \supseteq \P$. Assume that there exists $\bar{y}\in \P\setminus (\P_\epsilon+B(0,\tilde{\epsilon}))$. Hence, there exists $\bar{w}\in \R^q\setminus\{0\} $ such that 
		$\bar{w}^{\T}\bar{y}< \inf_{y\in\P_\epsilon}\bar{w}^{\T}y + \inf_{\gamma\in B(0,1)}\tilde{\epsilon}\bar{w}^\T\gamma$. 
		Without loss of generality, we may assume that $\|\bar{w}\|_\ast=1$ so that
		$
		\inf_{\gamma\in B(0,1)}\tilde{\epsilon}\bar{w}^\T\gamma=-\tilde{\epsilon}\norm{\bar{w}}_\ast=-\tilde{\epsilon}.
		$ 
		Hence, we have 
		\begin{equation}\label{eq:separation2}
		\bar{w}^\T\bar{y}+\tilde{\epsilon}< \inf_{y\in\P_\epsilon}\bar{w}^{\T}y=:\bar{\alpha}.
		\end{equation}
		The definition of $\bar{\alpha}$ ensures that $\varphi(y,\bar{w},\bar{\alpha})=\bar{w}^{\T}y-\bar{\alpha}\geq 0$ for each $y\in\P_\epsilon$, that is, $(\bar{w}^\T,\bar{\alpha})^{\T}\in \D_{\P_\epsilon}$. By \rev{\Cref{cor:special}(b)}, we have $\D_{\P_\epsilon} =\D_{\P_{\D_\epsilon}}= \D_{\epsilon}$. Therefore, $(\bar{w}^{\T},\bar{\alpha})^\T \in \D_{\epsilon}$. Hence, there exist $\delta\geq 0$, $k\geq0 $, $n\in \N$ and $\mu\in\Delta^{n-1}$, $((\bar{w}^i)^\T,\alpha_i )^{\T}\in \xi(\bar{\W}) $ for each $i\in\{1,\ldots,n\}$ such that 
		\begin{equation}\label{eqn:2}
		(\bar{w}^\T,\bar{\alpha})^\T= \delta\of{\sum_{i=1}^{n}\mu_i((\bar{w}^i)^\T,\alpha_i)^\T+\epsilon e^{q+1} } - k e^{q+1}.
		\end{equation}
		Using \eqref{eqn:2}, we have $\bar{w}=\delta\sum_{i=1}^{n}\mu_i\bar{w}^i $ and $\bar{\alpha}=\delta(\sum_{i=1}^{n}\mu_i\alpha_i+\epsilon )- k$. 
		In particular, having $\|\bar{w}\|_\ast=1$ implies that
		$
		\delta=\frac{1}{\norm{\sum_{i=1}^{n}\mu_i\bar{w}^i}}_\ast.
		$ 
		
		Next, we claim that $(\bar{w}^\T,\bar{\alpha}-\tilde{\epsilon})\in\D$. Since $\W=C^+$ is a convex cone, we have $\bar{w}\in\W$. Therefore, using the definition of $\D$, proving the inequality 
		\begin{equation}\label{eq:enough}
		\bar{\alpha}-\tilde{\epsilon}\leq \inf_{x\in\X}\bar{w}^\T f(x)
		\end{equation}
		is enough to conclude that $(\bar{w}^\T,\bar{\alpha}-\tilde{\epsilon})\in\D$.	
		Since $((\bar{w}^i)^\T,\alpha_i)\in\xi(\bar{\W})$, $\alpha_i = \inf_{x\in\X}(\bar{w}^i)^\T f(x)$ for each $i\in\{1,\ldots,n\}$. Hence, using \eqref{eqn:2} and $k\geq 0$, we have
		\begin{align*}
		\bar{\alpha}&=\delta\of{\sum_{i=1}^{n}\mu_i\alpha_i+\epsilon } - k 
		\leq\delta\of{\sum_{i=1}^{n}\mu_i \inf_{x\in\X}(\bar{w}^i)^{\T}f(x)+\epsilon }
		=\inf_{x\in\X}\bar{w}^\T f(x) +\delta\epsilon.
		\end{align*}
		Hence, we have $\bar{\alpha}-\delta\epsilon\leq\inf_{x\in\mathcal{X}}\bar{w}^\T f(x) $.
		
		Let us show that $\delta\epsilon \leq \tilde{\epsilon} $ so that \eqref{eq:enough} follows. For each $i\in\{1,\ldots,n\}$, since $\bar{w}^i\in\bar{\W}\subseteq\W\cap(\S^{q-1}\times\R)$, there exists $(\gamma_{i1},\ldots,\gamma_{iJ})^\T\in\Delta^{J-1}$ such that
		$
		\bar{w}^i=\sum_{j=1}^{J}\frac{\gamma_{ij}}{\|\sum_{j^\prime=1}^{J}\gamma_{ij^\prime}w^{j^\prime}\|_\ast}w^j,
		$ 
		where $w^1,\ldots, w^J $ are the generating vectors of $C^+ $. It follows that 
		$\sum_{i=1}^n \mu_i \bar{w}^i =\sum_{j=1}^J \lambda_j w^j$,
		where
		$
		\lambda_j \coloneqq \sum_{i=1}^{n}\mu_i\frac{\gamma_{ij}}{\|\sum_{j^\prime=1}^{J}\gamma_{ij^\prime}w^{j^\prime}\|_\ast}\geq 0
		$ 
		for $j\in\{1,\ldots,J\}$. Note that $\sum_{j=1}^J \lambda_j \geq 1$ since $\|\sum_{j=1}^{J}\gamma_{ij}w^j \|_\ast\leq \sum_{j=1}^{J}\|\gamma_{ij}w^j \|_\ast=\sum_{j=1}^{J}\gamma_{ij}\|w^j \|_\ast=1$, $\sum_{j=1}^J \gamma_{ij} = 1$ for each $i\in\{1,\ldots,n\}$ and $\sum_{i=1}^{n}\mu_i=1 $. Therefore, $\lambda=(\lambda_1,\ldots,\lambda_J)^\T\in\Delta_+^{J-1}$. Since the dual cone $C^{+}$ is convex and pointed, we may use \Cref{lem:5} to write
		\[
		\delta\epsilon
		=\frac{\epsilon}{\norm{\sum_{j=1}^J \lambda_j w^j}_\ast}
		\leq \frac{\epsilon}{\underset{\lambda^\prime\in\Delta_+^{J-1}}{\min}\norm{\sum_{j=1}^{J}\lambda^\prime_jw^j}_\ast} 
		=\frac{\epsilon}{\underset{\lambda^\prime\in\Delta^{J-1}}{\min}\norm{\sum_{j=1}^{J}\lambda^\prime_jw^j}_\ast} 
		=\tilde{\epsilon}.
		\]
		Therefore, \eqref{eq:enough} follows and we have $(\bar{w}^\T,\bar{\alpha}-\tilde{\epsilon})^\T\in \D$. However, by \eqref{eq:separation2}, we have $\varphi(\bar{y},\bar{w},\bar{\alpha}-\tilde{\epsilon})=\bar{w}^\T\bar{y}-\bar{\alpha}+\tilde{\epsilon}<0 $ for $\bar{y}\in\mathcal{P} $ and $(\bar{w}^\T,\bar{\alpha}-\tilde{\epsilon})^\T\in\mathcal{D} $, a contradiction to \Cref{prop:DPequalsD}. Hence, $\P_\epsilon+B(0,\tilde{\epsilon})\supseteq \P$.
	\end{proof}
	
	\rev{The next proposition provides a better bound on the realized approximation error compared to \Cref{prop:eps_tilde_1}; however, it requires post-processing the finite $\epsilon $-solution of \eqref{D} provided by the algorithm. We omit its proof for brevity.}
	
	\begin{proposition}\label{prop:eps_tilde_2}
		For $\epsilon>0 $, let $\bar{\W} $ be a finite $\epsilon $-solution of \eqref{D}. Define 
		$
		\D_\epsilon\coloneqq\cone(\conv\xi(\bar{\W})+\epsilon \{e^{q+1}\})-K.
		$ 
		Let $\mathcal{F}=\{F_1,\ldots,F_T\}$ be the set of \rev{$K$-maximal} facets of $\D_\epsilon$. For each $i\in\{1,\ldots,T\}$, let
		$
		\{ ((w^{i1})^\T,\alpha_{i1})^\T, \ldots, ((w^{iJ_i})^\T,\alpha_{iJ_i})^\T\}
		$ 
		be the set of extreme directions of $F_i$ and define
		$
		f^i_{\min} \coloneqq \min_{\lambda\in\Delta^{J_i-1}} \|\sum_{j=1}^{J_i}\lambda_j w^{ij}\|_\ast.
		$ 
		Then, $\P_\epsilon\coloneqq\P_{\D_\epsilon}$ is an inner approximation of $\P$ and $\P_\epsilon+B(0,\tilde{\epsilon})\supseteq\P\supseteq \P_\epsilon$, where $\tilde{\epsilon}=\epsilon/\min\{f^1_{\min},\ldots,f^T_{\min}\}$.
	\end{proposition}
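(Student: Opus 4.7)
The plan is to follow the contradiction scheme of the proof of \Cref{prop:eps_tilde_1}, but to exploit the finer polyhedral structure of $\D_\epsilon$: instead of decomposing the separating dual vector in terms of the global generators $w^1,\dots,w^J$ of $C^+$, one decomposes it along the extreme directions of a single $K$-maximal facet of $\D_\epsilon$. The inclusion $\P\supseteq\P_\epsilon$ is immediate from $\D_\epsilon\supseteq\D$ (\Cref{def:solnDual}) combined with \Cref{prop:DPequalsD}(b).

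For the other direction, suppose for contradiction that some $\bar{y}\in\P\setminus(\P_\epsilon+B(0,\tilde{\epsilon}))$ exists. Separating $\bar{y}$ from the closed convex upper set $\P_\epsilon+B(0,\tilde{\epsilon})$ and normalizing, I would produce $\bar{w}\in\R^q$ with $\|\bar{w}\|_\ast=1$ and $\bar{w}^\T\bar{y}+\tilde{\epsilon}<\bar{\alpha}\coloneqq\inf_{y\in\P_\epsilon}\bar{w}^\T y$. Applying \Cref{cor:special}(b) (valid since $\bar{\W}\subseteq\S^{q-1}$ is finite, so $0\notin\bar{\W}$) gives $\D_\epsilon=\D_{\P_{\D_\epsilon}}=\D_{\P_\epsilon}$, hence $(\bar{w}^\T,\bar{\alpha})^\T\in\D_\epsilon$; moreover, since $\bar{\alpha}$ is the support value of $\P_\epsilon$ in direction $\bar{w}$, $(\bar{w}^\T,\bar{\alpha})^\T$ is $K$-maximal in $\D_\epsilon$. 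As $\D_\epsilon$ is polyhedral and $\bar{w}\neq 0$, standard polyhedral arguments ensure that $(\bar{w}^\T,\bar{\alpha})^\T$ lies on some $K$-maximal facet $F_i\in\mathcal{F}$.

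Writing $(\bar{w}^\T,\bar{\alpha})^\T$ as a nonnegative conic combination of the extreme directions of $F_i$ and factoring out the total weight yields $\delta>0$ and $\mu\in\Delta^{J_i-1}$ such that $(\bar{w}^\T,\bar{\alpha})^\T=\delta\sum_{j=1}^{J_i}\mu_j((w^{ij})^\T,\alpha_{ij})^\T$. The normalization $\|\bar{w}\|_\ast=1$ then gives $\delta=1/\|\sum_{j=1}^{J_i}\mu_j w^{ij}\|_\ast\leq 1/f^i_{\min}\leq 1/\min\{f^1_{\min},\dots,f^T_{\min}\}$, so that $\delta\epsilon\leq\tilde{\epsilon}$. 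By the structure of $\D_\epsilon$ and the normalization $\|w^{ij}\|_\ast=1$ enforced in line 7 of \Cref{alg2}, each extreme direction has the form $((w^{ij})^\T,p^{w^{ij}}+\epsilon)^\T$, so $\alpha_{ij}-\epsilon=p^{w^{ij}}\leq(w^{ij})^\T f(x)$ for every $x\in\X$. Multiplying by $\delta\mu_j$ and summing over $j$ produces $\bar{w}^\T f(x)\geq\bar{\alpha}-\delta\epsilon\geq\bar{\alpha}-\tilde{\epsilon}$, and taking the infimum over $x\in\X$ (using $\bar{w}=\delta\sum_j\mu_j w^{ij}\in C^+$) yields $(\bar{w}^\T,\bar{\alpha}-\tilde{\epsilon})^\T\in\D$. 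Together with the separation inequality $\varphi(\bar{y},\bar{w},\bar{\alpha}-\tilde{\epsilon})<0$, this contradicts $\bar{y}\in\P=\P_\D$ via \Cref{prop:DPequalsD}(b).

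The main obstacle I anticipate is the polyhedral bookkeeping in the middle: one must carefully verify that the $K$-maximal point $(\bar{w}^\T,\bar{\alpha})^\T$ belongs to at least one facet of $\mathcal{F}$ and identify the precise form of the extreme directions $((w^{ij})^\T,\alpha_{ij})^\T$, specifically the identity $\alpha_{ij}=p^{w^{ij}}+\epsilon$ arising from the $\epsilon$-shift embedded in $\D_\epsilon$. Once this structure is settled, the remainder of the argument mirrors the proof of \Cref{prop:eps_tilde_1}, with the error bound localized to the extreme directions of the single facet $F_i$ rather than the global set of generators of $C^+$; this localization is precisely what produces the improved $\tilde{\epsilon}=\epsilon/\min\{f^1_{\min},\dots,f^T_{\min}\}$.
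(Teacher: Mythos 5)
Your proposal is correct and follows essentially the same route as the paper's proof: separate $\bar{y}$ from $\P_\epsilon+B(0,\tilde{\epsilon})$, place $(\bar{w}^\T,\bar{\alpha})^\T$ on a $K$-maximal facet $F_I$ of $\D_\epsilon$, decompose it over the extreme directions $((w^{Ij})^\T,p^{w^{Ij}}+\epsilon)^\T$ of that facet, and bound $\delta\epsilon\leq\epsilon/f^I_{\min}\leq\tilde{\epsilon}$ to conclude $(\bar{w}^\T,\bar{\alpha}-\tilde{\epsilon})^\T\in\D$ and reach a contradiction via \Cref{prop:DPequalsD}. The only cosmetic differences are that you observe directly that $(\bar{w}^\T,\bar{\alpha})^\T$ is already $K$-maximal (so the paper's auxiliary shift $+ke^{q+1}$ is unnecessary), and that the normalization $\norm{w^{ij}}_\ast=1$ should be credited to $\bar{\W}\subseteq\W\cap\S^{q-1}$ in \Cref{def:solnDual} rather than to a line of \Cref{alg2}.
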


	\begin{proposition}\label{prop:6}
		If the algorithm terminates, then it returns a finite weak $\tilde{\epsilon}$-solution $\bar{\mathcal{X}}$ to \eqref{P}, where $\tilde{\epsilon}$ is either as in \Cref{prop:eps_tilde_1} or as in \Cref{prop:eps_tilde_2}.
	\end{proposition}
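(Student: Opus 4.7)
The plan is to combine the structural results about the dual algorithm's output with the approximation bounds that have already been established. Specifically, I would first verify that $\bar{\mathcal{X}}$ consists of weak minimizers and then show that the set $\conv f(\bar{\mathcal{X}})+C$ is \emph{inside} the inner approximation $\mathcal{P}_\epsilon$ produced by a finite $\epsilon$-solution of \eqref{D}; combining this with \Cref{prop:eps_tilde_1} (or \Cref{prop:eps_tilde_2}) will deliver the approximation bound.

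First, I would note that every point added to $\bar{\mathcal{X}}$ in lines 1 and 10 of \Cref{alg2} is an optimal solution to \eqref{WS(w)} for some $w\in C^+\setminus\{0\}$, so by \Cref{prop:WS} every such point is a weak minimizer of \eqref{P}. Hence, it remains only to show that $\bar{\mathcal{X}}$ is a finite $\tilde{\epsilon}$-infimizer, i.e., $\conv f(\bar{\mathcal{X}})+C+B(0,\tilde{\epsilon})\supseteq\mathcal{P}$.

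Next, set $\bar{\mathcal{P}}\coloneqq\conv f(\bar{\mathcal{X}})+C$ and $\bar{\mathcal{D}}\coloneqq\mathcal{D}_{\bar{\mathcal{P}}}$, and let $\mathcal{D}_\epsilon\coloneqq\cone(\conv\xi(\bar{\mathcal{W}})+\epsilon\{e^{q+1}\})-K$ as in \Cref{prop:eps_tilde_1}. By \Cref{prop:5}, $\bar{\mathcal{W}}$ is a finite $\epsilon$-solution of \eqref{D}, so \Cref{prop:eps_tilde_1} yields $\mathcal{P}_{\mathcal{D}_\epsilon}+B(0,\tilde{\epsilon})\supseteq\mathcal{P}$. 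On the other hand, by \Cref{lem:D_inclusions}, we have $\mathcal{D}_\epsilon\supseteq\bar{\mathcal{D}}$. The transformation $\bar{\mathcal{D}}\mapsto\mathcal{P}_{\bar{\mathcal{D}}}$ is inclusion-reversing (immediate from \Cref{def:PD-DP}), hence $\mathcal{P}_{\mathcal{D}_\epsilon}\subseteq\mathcal{P}_{\bar{\mathcal{D}}}$. Since $\bar{\mathcal{P}}$ is a nonempty closed convex proper subset of $\mathbb{R}^q$ (it is polyhedral because $f(\bar{\mathcal{X}})$ is finite and $C$ is polyhedral, and $\bar{\mathcal{P}}\subseteq\mathcal{P}\neq\mathbb{R}^q$), \Cref{prop:7} applies to give $\mathcal{P}_{\bar{\mathcal{D}}}=\mathcal{P}_{\mathcal{D}_{\bar{\mathcal{P}}}}=\bar{\mathcal{P}}$. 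Chaining these inclusions yields $\mathcal{P}\subseteq\mathcal{P}_{\mathcal{D}_\epsilon}+B(0,\tilde{\epsilon})\subseteq\bar{\mathcal{P}}+B(0,\tilde{\epsilon})=\conv f(\bar{\mathcal{X}})+C+B(0,\tilde{\epsilon})$, which is exactly the required $\tilde{\epsilon}$-infimizer property. The identical argument goes through with $\tilde{\epsilon}$ as in \Cref{prop:eps_tilde_2}.

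The only mild subtlety I expect is the verification of the hypotheses of \Cref{prop:7} for $\bar{\mathcal{P}}$ (in particular, $\bar{\mathcal{P}}\neq\mathbb{R}^q$ and $\bar{\mathcal{P}}$ is closed); both are routine once one observes that $\bar{\mathcal{X}}$ is finite, $C$ is a polyhedral cone, and $\mathcal{P}$ itself is a proper subset of $\mathbb{R}^q$ by \Cref{prop:upperimage}. Everything else is a direct assembly of already-established facts.
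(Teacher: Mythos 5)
Your proposal is correct and follows essentially the same route as the paper's proof: weak minimality via \Cref{prop:WS}, the inclusion $\bar{\D}\subseteq\D_\epsilon$ from \Cref{lem:D_inclusions}, the identity $\P_{\bar{\D}}=\bar{\P}$ from \Cref{prop:7}, and the bound $\P_{\D_\epsilon}+B(0,\tilde{\epsilon})\supseteq\P$ from \Cref{prop:5} together with \Cref{prop:eps_tilde_1} (or \Cref{prop:eps_tilde_2}). Your explicit verification of the hypotheses of \Cref{prop:7} for $\bar{\P}$ is a harmless (and welcome) addition that the paper leaves implicit.
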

	
	\begin{proof}
		\rev{Note that every element of $\bar{\X}$ is of the form $x^w$ which is an optimal solution to \eqref{WS(w)} for some $w \in C^+\setminus \{0\} $; by \Cref{prop:WS}, $x^w$ is a weak minimizer of \eqref{P}. To prove the statement, we need to show that $\conv f(\bar{\X})+C+B(0,\tilde{\epsilon}) \supseteq \P$ holds. Let us define
			$
			\bar{\P}\coloneqq \conv f(\bar{\X})+C,$
			$\bar{\D}\coloneqq\D_{\bar{\P}}.$ From \Cref{lem:D_inclusions}, we have $\bar{\D} \subseteq \D_\epsilon = \cone(\conv\xi(\bar{\W})+\epsilon \{e^{q+1}\})-K$. Then, \Cref{prop:7} implies $\bar{\P}=\P_{\bar{\D}} \supseteq\P_{\D_\epsilon}$. 
			By \Cref{prop:5} and \Cref{prop:eps_tilde_1} (or \Cref{prop:eps_tilde_2}), we have $\P_{\D_\epsilon}+B(0,\tilde{\epsilon}) \supseteq \P$. As $\bar{\P} \supseteq \P_{\D_\epsilon}$, we get $\bar{\P} +B(0,\tilde{\epsilon}) =\conv f(\bar{\X})+C+B(0,\tilde{\epsilon}) \supseteq \P$, i.e., $\bar{\X}$ is a finite $\tilde{\epsilon} $-solution to \eqref{P}.
		}
	\end{proof}

	\section{\rev{Relationships to similar approaches from literature}}\label{sect:relation}	
	\rev{In this section, we compare our approach with the approaches in two closely related works.}
	
	\subsection{\rev{Connection to the geometric dual problem by Heyde \cite{heyde_2013}}}
	\rev{In \cite{heyde_2013}, a geometric dual image, which corresponds to the upper image $\mathcal{P}$ of problem \eqref{P}, is constructed in $\R^q$ as follows. For a fixed $c\in \Int C$, a matrix $E\in\R^{q\times(q-1)}$ is taken such that $T= [E \ \  c] \in \R^{q\times q}$ is orthogonal. Then, the dual image is defined as \[\D^H \coloneqq \{(t^\T, s)^\T \in \R^{q-1}\times \R \mid c^*(t)\in C^+, s \leq \inf_{x\in \X}(c^*(t))^\T f(x) \},\] where $c^*:\R^{q-1}\to \R^q$ is given by $c^\ast(t) \coloneqq T^{-\T} (t^\T, 1)^\T.$ Throughout, we denote the identity matrix in $\R^{n\times n}$ by $I_n \in \R^{n\times n}$. Observe from $T^\T T^{-\T} = I_q$ that $c^\T T^{-\T} = e_q$ and $E^\T T^{-\T} = [I_{q-1}\: 0] \in \R^{(q-1)\times q}$ hold true. }
	
	\rev{We will show that $\D^H$ and the dual image $\D$ constructed in \Cref{sec:problems} are related. For this purpose, we define $\hat{T}\in \R^{(q+1)\times(q+1)}, S \in \R^{(q+1)\times(q+1)}$ and $P \in \R^{q\times(q+1)}$ as \begin{equation}\label{eqq:T_hat_S_P} 
		\hat{T} \coloneqq \begin{bmatrix}
		T & 0\\
		0 & 1 
		\end{bmatrix}, \: S \coloneqq \begin{bmatrix}
		I_{q-1} & 0 & 0\\
		0 & 0 & 1 \\
		0 & 1 & 0 \end{bmatrix}, \: P \coloneqq \begin{bmatrix} I_q & 0 \end{bmatrix}, \end{equation} where $0$ is the matrix or vector of zeros with respective sizes. The following lemma will be useful for proving the next proposition.}
	%
	\rev{\begin{lemma}\label{lem:DHvsD}
			Let $H_y := \{ (t^\T,s)^\T \in \R^{q-1}\times \R\mid c^\ast(t)\in C^+, s \leq (c^*(t))^\T y\}$, where $y\in \R^q$ is fixed.  
			Then, 
			\begin{align*}\bar{H}_y:=\{(w^\T, \alpha)^\T \in C^+ \times \R \mid \alpha \leq w^\T y \} = \hat{T}^{-\T}S \left[\cone ( H_y \times \{1\}) \right]. \end{align*}	
		\end{lemma}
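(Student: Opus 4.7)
The plan is to prove the set equality by direct matrix computation together with an explicit reconstruction of preimages. The key observation is that the linear map $\hat{T}^{-\T}S$ acts in a particularly simple way on the generators of $\cone(H_y\times\{1\})$: applying $S$ to $\lambda(t^\T, s, 1)^\T$ swaps its last two entries, yielding $(\lambda t^\T, \lambda, \lambda s)^\T$, and then $\hat{T}^{-\T}$ applies $T^{-\T}$ to the first $q$ coordinates while fixing the last, producing
\[
\hat{T}^{-\T}S\,\lambda(t^\T, s, 1)^\T \;=\; \bigl(\lambda T^{-\T}(t^\T,1)^\T,\; \lambda s\bigr)^\T \;=\; \bigl(\lambda c^\ast(t)^\T,\; \lambda s\bigr)^\T.
\]

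For the forward inclusion, I would set $w\coloneqq \lambda c^\ast(t)$ and $\alpha\coloneqq \lambda s$ with $\lambda\geq 0$ and $(t^\T,s)^\T\in H_y$, then verify that $w\in C^+$ (since $c^\ast(t)\in C^+$ and $C^+$ is a cone) and that $\alpha\leq w^\T y$ (by multiplying $s\leq (c^\ast(t))^\T y$ through by $\lambda\geq 0$), which places $(w^\T,\alpha)^\T$ in $\bar{H}_y$.

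For the reverse inclusion, given $(w^\T,\alpha)^\T\in\bar{H}_y$ with $w\neq 0$, I would reconstruct a preimage via $\lambda\coloneqq c^\T w$, $t\coloneqq E^\T w/\lambda$, and $s\coloneqq \alpha/\lambda$. The orthogonality of $T=[E\;\; c]$ gives $T^\T w = (E^\T w,\,c^\T w)^\T = (\lambda t^\T, \lambda)^\T$, so $w = T^{-\T}(\lambda t^\T, \lambda)^\T = \lambda c^\ast(t)$; dividing by $\lambda$ yields $c^\ast(t) = w/\lambda \in C^+$, as needed for $(t^\T,s)^\T\in H_y$. The inequality $\alpha\leq w^\T y$ then rescales to $s\leq (c^\ast(t))^\T y$, and the vector $\lambda(t^\T,s,1)^\T\in\cone(H_y\times\{1\})$ maps under $\hat{T}^{-\T}S$ back to $(w^\T,\alpha)^\T$ by the identity above.

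The crucial technical step is to prove that $\lambda=c^\T w>0$ whenever $w\in C^+\setminus\{0\}$, which is exactly where the assumption $c\in\Int C$ is indispensable: if $c^\T w=0$ and $\{c+u:\|u\|\leq\varepsilon\}\subseteq C$ for some $\varepsilon>0$, then $w^\T(c+\varepsilon u)\geq 0$ for every unit $u$ would force $w=0$, a contradiction. The remaining boundary case $w=0$ in $\bar{H}_y$ reduces to $\alpha\leq 0$ and is absorbed through the $\lambda=0$ element of the conic hull; I expect handling this edge carefully to be the main technical obstacle in completing the argument.
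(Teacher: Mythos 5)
Your proposal is correct and takes essentially the same route as the paper: the same computation $\hat{T}^{-\T}S(t^\T,s,1)^\T=((c^\ast(t))^\T,s)^\T$ for the forward inclusion, and the same reconstruction $t=E^\T w/(c^\T w)$, $s=\alpha/(c^\T w)$ for the reverse. Your two extra observations go beyond what the paper writes down: the paper divides by $c^\T w$ without justifying $c^\T w>0$ (your $c\in\Int C$ argument supplies this), and it silently skips the case $w=0$, $\alpha<0$ --- which, as you suspect, is \emph{not} absorbed by the $\lambda=0$ generator (that only yields the origin) and is in fact a point of $\bar{H}_y$ missed by $\cone(H_y\times\{1\})$ unless one passes to the closed conic hull; this is a defect of the lemma's statement shared by the paper's own proof, not a weakness of your approach relative to it.
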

		\begin{proof}
			Note that 
			for any $(t^\T, s)^\T \in H_y$, we have $\hat{T}^{-\T}S (t^\T, s, 1)^\T = (c^\ast(t)^\T, s)^\T$, $c^\ast(t)\in C^+$ and $s \leq (c^*(t))^\T y$. Hence, $\hat{T}^{-\T}S[H_y \times \{1\}] \subseteq \bar{H}_y$ holds. As $\bar{H}_y$ is a cone, we obtain $$\hat{T}^{-\T}S[\cone(H_y \times \{1\})] = \cone\hat{T}^{-\T}S[H_y \times \{1\}] \subseteq \bar{H}_y.$$
			For the reverse inclusion, let $(w^\T, \alpha)^\T\in \bar{H}_y$. Consider $t = \frac{E^\T w}{c^\T w}, s = \frac{\alpha}{c^\T w}$. Then, $$c^\ast(t) = \frac{1}{k^\T w}T^{-\T} (w^\T E, w^\T c)^\T 
			= \frac{1}{c^\T w}T^{-\T}T^\T w = \frac{w}{c^\T w} \in C^+.$$ Moreover, $s \leq (c^*(t))^\T y$ holds. Hence, $(t^\T, s)^\T \in H_y$. Similar to the previous case, we have $$\hat{T}^{-\T}S (t^\T, s, 1)^\T = (c^\ast(t)^\T, s)^\T 
			= \frac{1}{c^\T w} (w^\T, \alpha)^\T \in \hat{T}^{-\T}S[\D^H\times\{1\}],$$ which implies $(w^\T, \alpha)^\T \in \cone \hat{T}^{-\T}S[\D^H\times\{1\}].$
		\end{proof}
		The next proposition shows that $\D$ and $\D^H$ can be recovered from each other.
		\begin{proposition}\label{prop:DHvsD} The following relations hold true for $\D$ and $\D^H$:
			\[(a) \: \D^H = P\left[ S\hat{T}^\T [\D] \cap \{z \in\R^{q+1} \mid z_{q+1} = 1 \} \right],\: (b) \: 
			\D = \hat{T}^{-\T}S \left[\cone (\D^H \times \{1\}) \right].\]
		\end{proposition}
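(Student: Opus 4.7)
The plan is to handle (a) and (b) separately, and for (b) to leverage Lemma \ref{lem:DHvsD} by writing both $\D$ and $\D^H$ as intersections of the halfspaces $\bar{H}_y$ and $H_y$ indexed by $y \in f(\X)$.

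For part (a), I would proceed by direct computation. For $(w^\T,\alpha)^\T\in\D$, using $T^\T=\begin{bmatrix}E^\T\\ c^\T\end{bmatrix}$ and the block structures of $\hat{T}$ and $S$ from \eqref{eqq:T_hat_S_P}, one obtains $S\hat{T}^\T(w^\T,\alpha)^\T=((E^\T w)^\T,\alpha,c^\T w)^\T$. Intersecting with $\{z\in\R^{q+1}\mid z_{q+1}=1\}$ enforces $c^\T w=1$, and applying $P$ yields $(E^\T w,\alpha)^\T\in\R^{q-1}\times\R$. Setting $t\coloneqq E^\T w$, the identity $T^\T w=(t^\T,1)^\T$ gives $w=T^{-\T}(t^\T,1)^\T=c^\ast(t)$, so the membership conditions $w\in C^+$ and $\alpha\leq p^w$ translate to $c^\ast(t)\in C^+$ and $\alpha\leq\inf_{x\in\X}(c^\ast(t))^\T f(x)$, i.e., $(t^\T,\alpha)^\T\in\D^H$. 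Conversely, for $(t^\T,s)^\T\in\D^H$, the choice $w\coloneqq c^\ast(t)$, $\alpha\coloneqq s$ yields $c^\T w=e_q^\T(t^\T,1)^\T=1$, $E^\T w=[I_{q-1}\ 0](t^\T,1)^\T=t$, and $(w^\T,\alpha)^\T\in\D$, so $(t^\T,s)^\T$ lies in the claimed image.

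For part (b), the plan is to observe that the definition of $p^w$ as an infimum gives
\[
\D=\bigcap_{y\in f(\X)}\bar{H}_y,\qquad \D^H=\bigcap_{y\in f(\X)}H_y,
\]
where $\bar{H}_y$ and $H_y$ are the sets from Lemma \ref{lem:DHvsD}. Since $\hat{T}^{-\T}S$ is an invertible linear map (as $T$ is orthogonal and $S$ is a permutation-type block matrix), it commutes with arbitrary intersections, and Lemma \ref{lem:DHvsD} then yields
\[
\D=\bigcap_{y\in f(\X)}\hat{T}^{-\T}S\bigl[\cone(H_y\times\{1\})\bigr]=\hat{T}^{-\T}S\Bigl[\bigcap_{y\in f(\X)}\cone(H_y\times\{1\})\Bigr].
\]
The remaining step is to establish $\bigcap_{y\in f(\X)}\cone(H_y\times\{1\})=\cone(\D^H\times\{1\})$. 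The inclusion $\supseteq$ is immediate from $\D^H\subseteq H_y$. For $\subseteq$, observe that any point in $\cone(H_y\times\{1\})$ has its $(q+1)$-th coordinate equal to the scalar $\lambda\geq 0$, independent of $y$; thus a common $z$ in all these cones determines a single $\lambda$, and if $\lambda>0$, the pair $(t,s)\coloneqq(z_{1:q-1}/\lambda,z_q/\lambda)$ must simultaneously lie in every $H_y$, hence in $\D^H$, while $\lambda=0$ forces $z=0$.

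The main subtlety will be the degenerate zero-direction $w=0$, which is also the point where the proof of Lemma \ref{lem:DHvsD} tacitly divides by $c^\T w$; the ray $\{(0^\T,\alpha)^\T\mid\alpha<0\}\subseteq\D$ is obtained only as a limit inside $\cone(\D^H\times\{1\})$. I would follow the paper's convention established in Lemma \ref{lem:DHvsD} and argue for $w\neq 0$ by taking $\lambda\coloneqq c^\T w>0$, $t\coloneqq E^\T w/\lambda$, $s\coloneqq\alpha/\lambda$, then verifying $(t^\T,s)^\T\in\D^H$ and $\hat{T}^{-\T}S\bigl(\lambda(t^\T,s,1)^\T\bigr)=(w^\T,\alpha)^\T$ via positive homogeneity of $p^{(\cdot)}$, noting that the reverse inclusion $\hat{T}^{-\T}S[\cone(\D^H\times\{1\})]\subseteq\D$ is the easy direction obtained by the same calculation run forward.
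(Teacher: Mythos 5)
Your proposal follows essentially the same route as the paper: part (a) is the same direct block computation showing $S\hat{T}^\T(w^\T,\alpha)^\T=((E^\T w)^\T,\alpha,c^\T w)^\T$ and identifying $w=c^\ast(t)$ on the slice $c^\T w=1$, and part (b) is the same reduction to \Cref{lem:DHvsD} via the decompositions $\D=\bigcap_{x\in\X}\bar{H}_{f(x)}$ and $\D^H=\bigcap_{x\in\X}H_{f(x)}$ together with commuting the injective linear map with the intersections. If anything, you are more explicit than the paper on the two points it glosses over, namely the interchange $\cone\bigcap_{y}(H_y\times\{1\})=\bigcap_{y}\cone(H_y\times\{1\})$ (valid because these sets lie in the affine hyperplane $z_{q+1}=1$) and the degenerate ray $\{(0^\T,\alpha)^\T\mid \alpha\leq 0\}$, at which the paper's own proof of \Cref{lem:DHvsD} silently divides by $c^\T w$.
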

		\begin{proof}
			To see (a), let $(t^\T,s)^\T \in P[ S\hat{T}^\T [\D] \cap \{z \in\R^{q+1} \mid z_{q+1} = 1 \} ]$ be arbitrary. There exists $(w^\T,\alpha)^\T\in\D$ such that $(t^\T, s)^\T = PS\hat{T}^\T (w^\T, \alpha)^\T = (w^\T E, \alpha)^\T$ together with $c^\T w=1$. 
			In particular, $t=E^\T w, s = \alpha$. We have $w^\T T = (
			w^\T E, w^\T c) = (t^\T, 1)$ and $c^\ast (t) = T^{-\T}( t^\T, 1)^\T = T^{-\T} T^\T w = w \in C^+$ as $(w^\T, \alpha)^\T \in \D$. Moreover, $s \leq \inf_{x\in \X}(c^*(t))^\T f(x)$ holds. Hence, $(t^\T,s)^\T \in \D^H$. For the reverse inclusion, let $(t^\T,s)^\T \in \D^H$. Then, $(c^\ast(t)^\T, s)^\T \in \D$ and $ S\hat{T}^\T(c^\ast(t)^\T, s)^\T = ((c^\ast(t))^\T E , s, c^\T c^\ast(t))^\T$ $= ( t^\T, s, 1)^\T.$ Hence, $(t^\T,s)^\T \in P[ S\hat{T}^\T [\D] \cap \{z \in\R^{q+1} \mid z_{q+1} = 1 \}]$.\\
			To see (b), let $H_y,\bar{H}_y$ be defined as in \Cref{lem:DHvsD}. Noting that $\D^H = \bigcap_{x \in \X} H_{f(x)}$ and $\D = \bigcap_{x \in \X} \bar{H}_{f(x)}$, \Cref{lem:DHvsD} implies
			\begin{align*}
			\hat{T}^{-\T}S \left[\cone ( \D^H \times \{1\}) \right] &= 		\hat{T}^{-\T}S \bigg[\cone \big( \bigcap_{x \in \X} (H_{f(x)} \times \{1\}) \big) \bigg] \\&=  \bigcap_{x \in \X} \hat{T}^{-\T}S \left[\cone  \left(H_{f(x)} \times \{1\}\right) \right]= \bigcap_{x \in \X}\bar{H}_{f(x)} = \D.
			\end{align*}
		\end{proof}
		%
		\begin{remark}
			For $\D^H$ and $\D$, one can check that the following results hold:
			(i) A subset $F \subseteq \D^H$ is an exposed face of $\D^H$ if and only if $\hat{T}^{-\T}S [\cl \cone (F \times \{1\})]$ is an exposed face of $\D$.
			(ii) For every exposed face $G$ of $\D$, not lying in the hyperplane $\hat{T}^{-\T}S [\R^q\times \{0\}]$, there is an exposed face $F$ of $\D^H$ such that $\hat{T}^{-\T}S [\cl\cone (F\times\{-1\})]=G$.
			The proof relies on a generalization of \cite[Lemma 3.1]{luc-duality} for the exposed faces of (possibly nonpolyhedral) closed convex sets.
	\end{remark}}
	
	\subsection{\rev{Connection to the geometric dual algorithm by L\"{o}hne, Rudloff and Ulus \cite{ulus_2014}}}
	
	\rev{The geometric dual algorithm proposed in \cite{ulus_2014} is based on the geometric duality results from \cite{heyde_2013}. We will now show that by considering a special norm $\norm{\cdot}$ used in \Cref{alg2}, we recover the dual algorithm from \cite{ulus_2014}. In particular, assume that the dual norm satisfies $\norm{w}_* = c^\T w$ for all $w \in C^+$, \Cag{where $c\in\Int C$ is fixed}.\footnote{For instance, $\norm{w}_* \coloneqq \inf \{ c^\T (v + u) \mid v, u \in C^+, v - u = w \}$ would satisfy this. 
	}} 
	
	
	\rev{We denote by $\D^{H}_k, k\geq 0$ the outer approximations of $\D^H$ that is obtained in the $k^{th}$ iteration of \cite[Algorithm 2]{ulus_2014}. Let $w^0, x^0$ be as defined in Section \ref{subsect:dualalg}. Noting that $\norm{w^0}_\ast = c^\T w^0 =1$ and $x^0\in \argmin_{x\in\X}(w^0)^\T f(x)$, in \cite[Algorithm 2]{ulus_2014}, the initial outer approximation for $\D^H$ is $$\D_0^H = \{(t^\T,s)^\T \in \R^{q-1}\times \R \mid c^\ast(t)\in C^+,  s \leq c^\ast(t)^\T f({x}^0)\} = H_{f(x^0)}.$$ By \Cref{lem:DHvsD}, we obtain $\D_0 = \hat{T}^{-\T}S \left[\cone (\D^H_0 \times \{1\}) \right].$ Now, assume for some $k \geq 1$, $\D_k = \hat{T}^{-\T}S \left[\cone (\D^H_k \times \{1\}) \right]$ holds. Then, for any vertex $(t^\T,s)^\T$ of $\D_k^H$, $\hat{T}^{-\T}S(t^\T,s,1)^\T$ is an extreme direction of $\D_k$.} 
	\rev{Moreover, for an extreme direction $(w^\T, \alpha)$ of $\D_k$, there exists a vertex $v = (t^\T,s)^\T$ of $\D^H_k$ such that $\frac{(w^\T,\alpha)^\T}{\norm{w}_\ast} = \frac{\hat{T}^{-\T}S(t^\T,s,1)^\T}{\norm{P\hat{T}^{-\T}S(t^\T,s,1)^\T}_\ast}$.} 
	
	\rev{In \Cref{alg2}, we consider an extreme direction of $\D_k$, say $(w^\T, \alpha)^\T$ with $\norm{w}_\ast = 1$ and solve \eqref{WS(w)}. Assume that $v=(t^\T,s)^\T\in\D^H_k$ is a vertex  satisfying $(w^\T,\alpha)^\T = \frac{\hat{T}^{-\T}S(v^\T,1)^\T}{\norm{P\hat{T}^{-\T}S(v^\T,1)^\T}_\ast}$. Note that with the specified norm, we have $\norm{P\hat{T}^{-\T}S(t^\T,s,1)^\T}_\ast = c^\T P\hat{T}^{-\T}S(t^\T,s,1)^\T = 
		c^\T T^{-\T}(t^\T,1)^\T = e_q^\T (t^\T,1)^\T = 1$. Then, we have $\hat{T}^{-\T}S(t^\T,s,1)^\T = (w^\T,\alpha)^\T $, hence
		$(t^\T,1,s)^\T = S(t^\T,s,1)^\T = \hat{T}^{\T}(w^\T,\alpha)^\T = (w^\T T, \alpha)^\T.$
		In particular, $(t^\T,1)^\T = T^\T w$ holds. This implies $c^\ast(t) = T^{-\T}(t^\T,1)^\T =  w$ and $s = \alpha$. In particular, both algorithms solve the same weighted sum scalarization problem \eqref{WS(w)},
		hence may find the same minimizer $x^w$. Moreover, both update the current outer approximation if $\alpha-p^w > \epsilon$ holds. In that case, the updated outer approximations are $\D^H_{k+1} = \D^H_k \cap H_{f(x^w)}$
		and $\D_{k+1} = \D_k \cap \bar{H}_{f(x^w)}$, 
		respectively. Then, by \Cref{lem:DHvsD}, we have 
		\begin{align*}
		\hat{T}^{-\T}S \left[\cone (\D^H_{k+1} \times \{1\}) \right] &= \hat{T}^{-\T}S \left[\cone \right( (\D^H_{k}\cap H_{f(x^w)}) \times \{1\}\left) \right] 
		\\&= \hat{T}^{-\T}S \left[\cone \right( (\D^H_{k} \times \{1\}) \cap(H_{f(x^w)} \times \{1\}) \left) \right] \\
		&= \hat{T}^{-\T}S \left[\cone \left( \D^H_{k} \times \{1\}\right) \cap \cone \left(H_{f(x^w)} \times \{1\}\right) \right] \\
		&= \D_k \cap \bar{H}_{f(x^w)}  = \D_{k+1}.
		\end{align*}
	}
	\rev{We have shown by induction that $\D_k = \hat{T}^{-\T}S \left[ \cone ( \D^H_k \times \{1\} ) \right] $ hold for all $k\geq 0$ as long as the algorithms consider the extreme directions, respectively the corresponding vertices, in the same order. In that case, $\bar{\X}$ returned by both algorithms would also be the same. We conclude that by considering a norm satisfying $\norm{w}_\ast = c^\T w$, the proposed geometric dual algorithm recovers \cite[Algorithm 2]{ulus_2014}.}	
	
	\section{Numerical examples}\label{sec:numres}
	
	We implement the primal and dual algorithms given in \Cref{sec:algorithms} using MATLAB.\footnote{We run \rev{the experiments in \Cref{subsect:results}} on a computer with i5-8265U CPU and 8 GB RAM\rev{, and the ones in \Cref{subsect:compar_lit} on a computer with i5-10310U CPU and 16 GB RAM.}} For problems \eqref{PS(v)} and \eqref{WS(w)}, we employ CVX, a framework for specifying and solving convex programs \cite{gb08,cvx}. For solving vertex enumeration problems within the algorithms, we use \emph{bensolve tools} \cite{ciripoi_2018,weissing_2016,bensolve_notes}.

	\subsection{Proximity measures}\label{sec:prox}
	
	
	We measure the performance of the primal and dual algorithms using two indicators. The distance between $\P$ and the furthest point from the outer approximation $\P_o$ returned by the algorithm to $\P$ is referred to as the \emph{primal error indicator}. If the recession cone of $\P_o$ is $C$, then the primal error indicator is nothing but the Hausdorff distance between $\P_o$ and $\P$, see \cite[Lemma 5.3]{umer_2022}. The primal error indicator of an algorithm is calculated by solving \eqref{PS(v)} for the set $\V_o$ of the vertices of $\P_o$. For each $v\in \V_o$, we find an optimal solution $z^v$ and the corresponding optimal value $\norm{z^v}$ by solving \eqref{PS(v)}. Then, the primal error indicator is calculated by
	$\text{PE}\coloneqq \max\{\norm{z^v}\mid v\in \V_o\}$.
	
	
	Note that the norm used in the definition of PE is the same (primal) norm that is used in the algorithm. 
	\rev{Hence, this indicator depends on the choice of the norm. Motivated by this, we define a \emph{hypervolume indicator for CVOPs} which is free of norm-biasedness.} 
	Recall that, for a multiobjective optimization problem, the hypervolume of a set $S\subseteq \R^q$ of points with respect to a reference point $r\in\R^q$ 
	is computed as $\Lambda(\{y\in\R^q \mid s \leq_{\R^q_+} y\leq_{\R^q_+} r,\ s\in S \})$,
	where $\Lambda$ is the Lebesgue measure on $\R^q$ 
	\cite{zitzler2003performance}.
	Different from this hypervolume measure, which can be used for convex as well as nonconvex multiobjective optimization problems, we use the fact that \eqref{P} is a convex problem and the underlying order relation is induced by cone $C$. To that end, we define the hypervolume of a set $S\subseteq \R^q$ \emph{with respect to a bounding polytope} $\mathcal{Q}\subseteq \R^q$ as 
	$\text{HV}(S,\mathcal{Q})\coloneqq \Lambda((\conv S+C)\cap\mathcal{Q})$.
	Let $\P_o, \P_i$ be, respectively, the outer and inner approximations of $\P$ returned by an algorithm and $\V_o,\V_i$ be the set of vertices of them. Then, we compute the \emph{hypervolume indicator} by
	$\text{HV}\coloneqq(\frac{\text{HV}(\V_o,\mathcal{Q})-\text{HV}(\V_i,\mathcal{Q})}{\text{HV}(\V_o,\mathcal{Q})})\times 100$, 
	where $\mathcal{Q}\subseteq \R^q$ is a polytope satisfying $\V_o\cup\V_i \subseteq \mathcal{Q}$. Suppose that the problem is solved by finitely many algorithms and let $A\subseteq\R^q$ be the set of all vertices of the outer and inner approximations returned by \emph{all} algorithms. For our computational tests, in order to have a fair comparison, we fix the polytope $\mathcal{Q}$ such that $A \subseteq \mathcal{Q}$. For this, we set
	$\mathcal{Q}\coloneqq \bigcap_{j=1}^J\{y\in\R^q \mid (w^j)^{\T}y\leq \max_{a\in A} (w^j)^{\T}a \}$,
	where $w^1, \ldots, w^J $ are the generating vectors of $C^+ $. Note that a smaller hypervolume indicator is more desirable in terms of an algorithm's performance since it means less of a difference between the inner and outer sets.

	\subsection{Computational results \rev{on Algorithms \ref{alg1} and \ref{alg2}}} \label{subsect:results}
	
	We assess the performance of the primal and dual algorithms by solving randomly generated problem instances. A problem structure that is simple and versatile is required for scaling purposes, in terms of both the decision space and the image space. To this end, we work with a linear objective function and a quadratic constraint as described in \Cref{example:3}.
	
	\begin{example}\label{example:3}
		Consider the problem
		\begin{equation*}
		\begin{aligned}
		\textnormal{minimize } f(x)=A^{\T}x \textnormal{ with respect to } \leq_{\R_+^q}
		\textnormal{ subject to } x^{\T}Px-1\leq0,
		\end{aligned}
		\end{equation*}
		where $P \in \R^{n\times n}$ is a symmetric positive definite matrix, and $A\in\R_+^{n\times q}$. In the computational experiments, we generate $A$ and $P$ as instances of random matrices.\footnote{More precisely, we generate $A$ as the instance of a random matrix with independent entries having the uniform distribution over $[0,50]$. To construct $P$, we first create a matrix $U\in\R^{n\times n}$ following the same procedure as for $A$. We assume that $U$ has at least one nonzero entry, which occurs with probability one. Then, $S\coloneqq (U+U^\T)/2$ is a symmetric matrix and we calculate its diagonalization as $S=Q\bar{D}Q^{\T}$, where $\bar{D}\in\R^{n\times n}$ is the diagonal matrix of the eigenvalues of $S$, and $Q$ is the orthogonal matrix of the corresponding eigenvectors. Denoting by $D\in\R^{n\times n}$ the diagonal matrix whose entries are the absolute values of the entries of $\bar{D}$, we let $P\coloneqq QDQ^{\T}$, which is guaranteed to be symmetric and positive definite.} 
	\end{example}
	
	
	\subsubsection{Results for multiobjective problem instances} \label{subsubsect:tests1}
	
	This subsection provides the results obtained by solving randomly generated instances of \Cref{example:3}. The computational results are presented in \Cref{table:2}, which shows the stopping criteria (Stop), 
	number of optimization problems solved (Opt), time taken to solve optimization problems ($\textnormal{T}_{\textnormal{opt}}$), number of vertex enumeration problems solved (En), time taken to solve vertex enumeration problems ($\textnormal{T}_{\textnormal{en}}$), and total runtime of the algorithm (T) in terms of seconds. As a measure of efficiency for the algorithms, we also provide the average runtime spent per optimization problem ($\textnormal{T}_{\textnormal{opt}}  / \textnormal{Opt} $) and total runtime per weak minimizer in $\bar{\X} $ ($\textnormal{T}/\abs{\bar{\X}} $).
	
	The performance indicators for the tests are primal error (PE) and hypervolume (HV). For the computations of these, we take the following: 
	For \Cref{alg1}, $\P_o=\P_{\bar{k}}$, where $\bar{k}$ is the iteration number at which the algorithm terminates; for \Cref{alg2}, $\P_o = \bigcap_{w\in \bar{\W}}\H(w,p^w)$ where $\bar{\W}$ is the solution to \eqref{D} that \Cref{alg2} returns. For both algorithms, $\P_i = \conv f(\bar{\X}) +C$, where $\bar{\X}$ is the solution to \eqref{P} that the corresponding algorithm returns.
	
	We take $q=3$. The results for various choices of the dimension of the decision space ($n$) can be found in \Cref{table:2}. For each value of $n$, $20$ random problem instances are generated using the structure of \Cref{example:3} and solved by both Algorithms \ref{alg1} and \ref{alg2} twice with different stopping criteria. 
	The averages of the results and performance indicators of the $20$ instances are presented in the table in corresponding cells. 
	

	\begin{table}[h!]
		\centering
		\resizebox{0.7\textwidth}{!}{
			\begin{tabular}{| c| c| c| c | c | c | c | c | c | c | c |c| }
				\hline
				$n$ & Alg & Stop  & Opt & $ \textnormal{T}_{\textnormal{opt}} $  & $\textnormal{T}_{\textnormal{opt}}  / \textnormal{Opt} $ & En  & $ \textnormal{T}_{\textnormal{en}} $ & T & $\textnormal{T}/ \abs{\bar{\X}} $ & PE & HV\\\hline
				
				\multirow{4}{*}{10}
				&\ref{alg1}	&$\epsilon_1$ = 0.5000	&52.35	&19.92	&0.3801	&5.05	&0.09	&46.41	&0.91	&0.4209	&3.5805\\
				&\ref{alg2} &$\epsilon_2$ = 0.2887	&86.25	&29.03	&0.3363	&4.40	&0.20	&71.36	&70.84	&0.1106	&1.1973\\
				&\ref{alg1} &$\epsilon_3$ = 0.1106	&232.80	&85.32	&0.3655	&6.90	&0.16	&200.13	&0.92	&0.1052	&1.2846\\
				&\ref{alg2}           &T = 46.41	&60.70	&20.32	&0.3336	&4.05	&0.24	&48.43	&0.80	&0.2244	&1.8735\\ \hline
				\multirow{4}{*}{15}
				&\ref{alg1}	&$\epsilon_1$ = 0.5000	&70.85	&26.98	&0.3777	&5.60	&0.09	&63.54	&0.93	&0.4694	&2.6458\\
				&\ref{alg2} &$\epsilon_2$ = 0.2887	&105.15	&34.87	&0.3311	&4.70	&0.26	&84.58	&0.80	&0.1033	&0.7146\\
				&\ref{alg1} &$\epsilon_3$ = 0.1033	&295.30	&108.74	&0.3677	&7.45	&0.21	&251.56	&0.91	&0.0991	&0.5690\\
				&\ref{alg2}            &T = 63.54	&81.75	&27.01	&0.3307	&4.40	&0.36	&65.51	&0.81	&0.2022	&1.0703\\ \hline
				\multirow{4}{*}{20}
				&\ref{alg1}	&$\epsilon_1$ = 0.5000	&65.45	&24.62	&0.3797	&5.50	&0.09	&57.12	&0.92	&0.4569	&3.4845\\
				&\ref{alg2} &$\epsilon_2$ = 0.2887	&101.95	&33.66	&0.3302	&4.65	&0.24	&81.31	&0.80	&0.1052	&1.0783\\
				&\ref{alg1} &$\epsilon_3$ = 0.1052	&284.25	&104.18	&0.3672	&7.35	&0.19	&238.73	&0.91	&0.1020	&0.7881\\
				&\ref{alg2}            &T = 57.12	&74.40	&24.63	&0.3324	&4.30	&0.29	&59.06	&0.80	&0.1996	&1.5307\\ \hline
				\multirow{4}{*}{25}
				&\ref{alg1}	&$\epsilon_1$ = 0.5000	&85.95	&32.63	&0.3786	&5.95	&0.12	&82.51	&0.98	&0.4650	&2.3063\\
				&\ref{alg2} &$\epsilon_2$ = 0.2887	&139.25	&46.13	&0.3306	&4.95	&0.34	&112.65	&0.81	&0.1071	&0.6154\\
				&\ref{alg1} &$\epsilon_3$ = 0.1071	&368.60	&137.91	&0.3736	&7.70	&0.26	&340.28	&1.01	&0.1034	&0.5063\\
				&\ref{alg2}            &T = 82.51	&106.10	&35.02	&0.3297	&4.80	&0.56	&84.55	&0.80	&0.1878	&0.8167\\ \hline
				\multirow{4}{*}{30}
				&\ref{alg1}	&$\epsilon_1$ = 0.5000	&95.70	&36.59	&0.3818	&6.00	&0.12	&91.27	&0.99	&0.4690	&2.2715\\
				&\ref{alg2} &$\epsilon_2$ = 0.2887	&150.70	&51.13	&0.3386	&5.15	&0.43	&131.15	&0.87	&0.1066	&0.6110\\
				&\ref{alg1} &$\epsilon_3$ = 0.1066	&452.70	&170.22	&0.3751	&8.05	&0.32	&419.02	&1.02	&0.1046	&0.5420\\
				&\ref{alg2}            &T = 91.27	&109.00	&36.89	&0.3381	&4.60	&0.48	&93.41	&0.86	&0.2272	&0.8947 \\ 
				\hline
			\end{tabular}
		}
		\caption{Results of randomly generated problems.}
		\label{table:2}
	\end{table}
	
	In \Cref{table:2}, the first two rows for each value of $n$ show the results of the primal and dual algorithms when the given $\epsilon_i$ value is fed to the algorithms as stopping criterion. It can be seen that the $\epsilon_i$ values that are used in the algorithms are different. We run \Cref{alg1} and obtain a weak $\epsilon_1$-solution to problem \eqref{P}. When working with \Cref{alg2}, to obtain also a weak $\epsilon_1$-solution to problem \eqref{P}, we take
	$\epsilon_2 = \epsilon_1\min_{\lambda\in\Delta^{J-1}}\|\sum_{j=1}^J\lambda_jw^j\|_\ast
	$
	based on \Cref{prop:eps_tilde_1}. As a result, we take $\epsilon_1=0.5$ for \Cref{alg1} and $\epsilon_2=0.2887$ for \Cref{alg2} as stopping criteria. We observe from the first two rows for each $n$ in \Cref{table:2} that \Cref{alg1} stops in shorter runtime; however, \Cref{alg2} returns smaller primal error and hypervolume indicators.
	
	For further comparison of the algorithms, we also run each algorithm with different stopping criteria. In the third row, we aim to observe the runtime it takes for \Cref{alg1} to reach similar primal error that \Cref{alg2} returns. Therefore, the PE value in the second row is fed to \Cref{alg1} as stopping criterion. 
	Finally, in the fourth row, we aim to observe the primal error and hypervolume indicators of \Cref{alg2} when it is terminated after a similar runtime as of \Cref{alg1} from the first row. Therefore, T from the first row is fed to \Cref{alg2} as stopping criterion.
	\footnote{Note that the actual termination time is slightly higher than the predetermined time limit as we check the time only at the beginning of the loop in the implementation and it takes a couple of more seconds to terminate the algorithm afterwards.}

	From \Cref{table:2}, one can observe that Algorithms \ref{alg1} and \ref{alg2} in the first two rows are not comparable. Indeed, while \Cref{alg1} in the first row has shorter runtime for each $n$ value, it also gives larger hypervolume results, which indicates worse performance in comparison with \Cref{alg2} in the second row. The main reason is that, when \Cref{alg2} is run with a stopping criterion that guarantees obtaining a weak $\epsilon_1$-solution to \eqref{P}, the solution that it returns has much higher proximity, for instance, compare $\epsilon_2=0.2887$ and PE$=0.1106$ in the second row of \Cref{table:2}.\footnote{After running \Cref{alg2} with the predetermined $\epsilon_2$ value (to get a weak $\epsilon_1$-solution for \eqref{P}), in order to compute the realized PE for \eqref{P}, we solve \eqref{PS(v)} problems as explained in \Cref{sec:prox}. Instead, in order to compute an upper bound for the PE value (which would be a better bound than $\epsilon_1$), one can also use \Cref{prop:eps_tilde_2}.} On the other hand, when we set $\epsilon_3$ in a way that Algorithms \ref{alg1} and \ref{alg2} return similar PE values, \Cref{alg1} may return slightly better HV results but it requires higher runtime.
	
	In order to have further insights, we analyze 
	the results of \Cref{alg1} from the first row and \Cref{alg2} from the fourth row in more detail, since they have similar runtimes. 
	Similarly, as they yield similar PE values, we analyze the results of \Cref{alg1} from the third row and \Cref{alg2} from the second row. 
	In \Cref{fig:7}, the plots of the PE and HV values for rows one and four of \Cref{table:2} are shown (first two figures). 
	One can see that \Cref{alg2} has consistently better performance for each value of $n$ in terms of both primal error and hypervolume indicator. Moreover, the plots of total runtimes and HV values corresponding to rows two and three of \Cref{table:2} can be seen together with the PE values on the right vertical axis (last two figures). We observe that the difference between the primal error indicators of both algorithms are very similar to each other, although, \Cref{alg1} has smaller primal error as expected. With very similar primal error indicators, we observe that \Cref{alg2} has around half of the runtime of \Cref{alg1}. In line with the primal error results, \Cref{alg1} has a better HV value than \Cref{alg2}.
	
	\begin{figure}[h!]
		\centering
		\includegraphics[scale=0.12]{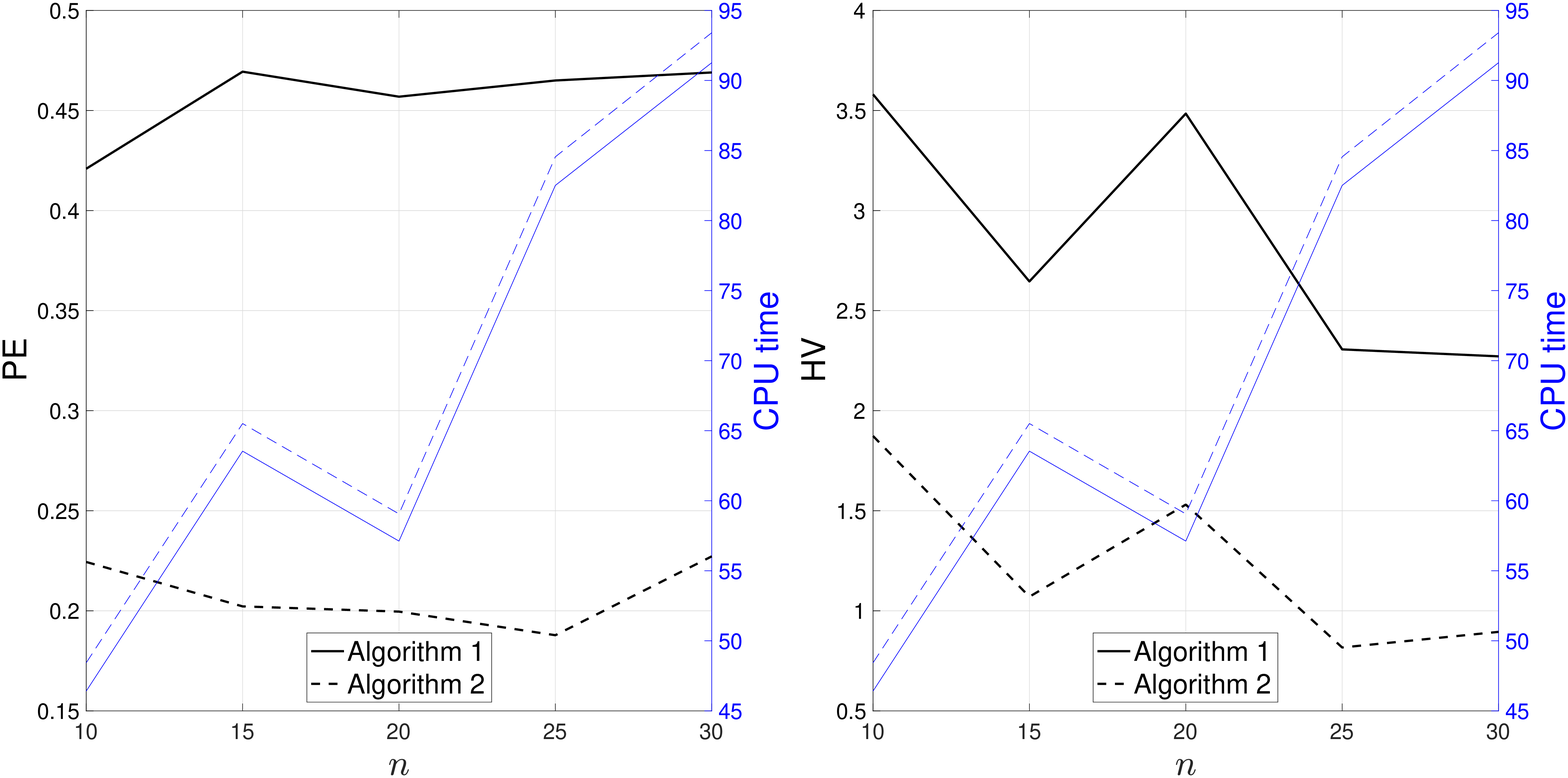}
		\includegraphics[scale=0.12]{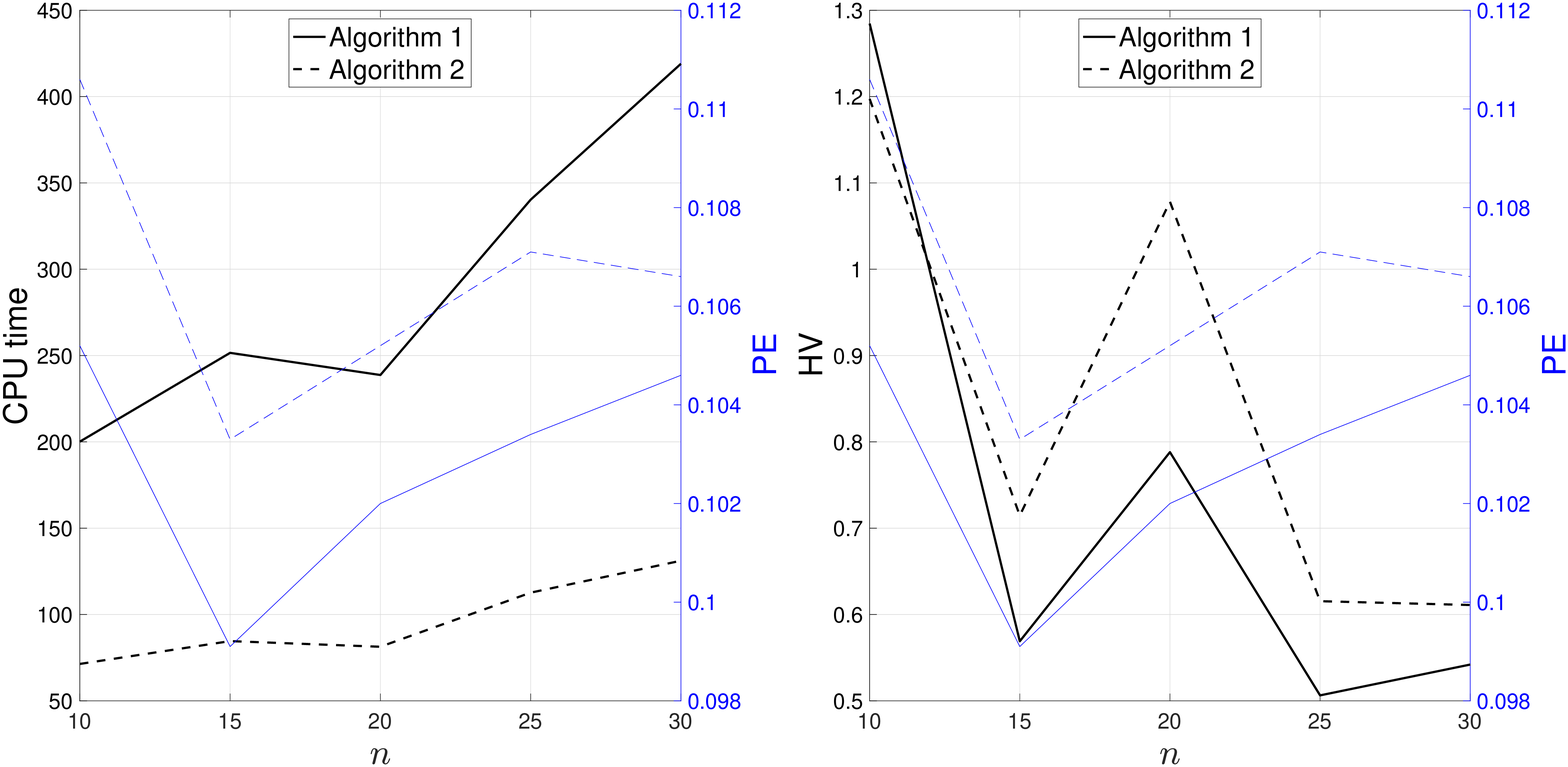}
		\caption{\Cref{example:3}: Average PE (first) and HV (second) values of $20$ random instances under nearly equal runtime (rows one and four of \Cref{table:2});
			average CPU time (third) and HV (fourth) values of these instances when the dual algorithm uses the PE of the primal algorithm as approximation error (rows two and three of \Cref{table:2}).}
		\label{fig:7}
	\end{figure}
	\subsubsection{Results for different ordering cones and norms} \label{subsubsect:tests_2}
	
	In order to test the performance of the algorithms on problems with different ordering cones and with different norms employed in \eqref{PS(v)} and \eqref{D-PS(v)}, we design some additional experiments. Although the original stopping criteria in Algorithms \ref{alg1} and \ref{alg2} are based on $\epsilon$-closeness, we implement an alternative stopping criterion for the rest of our analysis. In particular, motivated by our analysis in \Cref{subsubsect:tests1}, we set a predefined time limit for the algorithms. Note that since $\epsilon$-closeness depends on the choice of the norm used in the scalarization models, using runtime as the stopping criterion results in a fair comparison of the algorithms when considering different norms.
	
	We denote the nonnegative cone by $C_1=\R^3_+$. The non-standard ordering cones that will be used throughout are\footnote{These cones are taken from \cite{umer_2022}.}:\\ 
	$C_{2} = \conv \cone \{ (4,2,2)^\T, (2,4,2)^\T, (4,0,2)^\T, (1,0,2)^\T,(0,1,2)^\T,(0,4,2)^\T\}$,\\
	$C_{3}=\conv \cone\{(-1,-1,3)^\T,(2,2,-1)^\T,(1,0,0)^\T,(0,-1,2)^\T,(-1,0,2)^\T,(0,1,0)^\T\}$.
	
	We consider the $20$ instances of \Cref{example:3} that were generated randomly for our analysis in \Cref{subsubsect:tests1}. We solve all instances under the runtime limit of $50$ seconds. We consider ordering cones $C_1,C_2,C_3$ and $\ell_p$ norms for $p\in \{1,2,\infty\}$. While solving some of the instances with different ordering cones or different norms, we encountered some errors in the solvers that we have employed in the algorithms. Hence, we consider a subset of instances which can be solved in all settings and we indicate the size of this subset in a separate column in \Cref{table:3} (Size).
	
	
	First, we fix the $\ell_2$ norm and solve the problem instances with respect to the ordering cones $C_1,C_2$ and $C_3$; second, we fix the ordering cone as $C_1$ and solve the problem instances where we take the norms in \eqref{PS(v)} as $\ell_1,\ell_2$ and $\ell_\infty$, see the left and the right columns of \Cref{table:3}, respectively. 
	In order to summarize the results, we plot the average PE and HV values obtained by the algorithms, see Figures \ref{fig:9} and \ref{fig:11}. From these figures, we observe that for all considered ordering cones and norms, \mbox{\Cref{alg2}} has better performance in terms of both PE and HV under time limit.

	\begin{table}[h]
		\centering
		\resizebox{\textwidth}{!}{
			\begin{tabular}{|c|c| c| c | c | c | c | c | c | c | c |c||c| c| c | c | c | c | c | c | c | c |c| }
				\hline
				$n$ & Size & Cone  &Alg & Opt & $ \textnormal{T}_{\textnormal{opt}} $  & $\textnormal{T}_{\textnormal{opt}}  / \textnormal{Opt} $ & En  & $ \textnormal{T}_{\textnormal{en}} $  & $\textnormal{T}/ \abs{\bar{\X}} $ & PE & HV & Size & $p$  &Alg & Opt & $ \textnormal{T}_{\textnormal{opt}} $  & $\textnormal{T}_{\textnormal{opt}}  / \textnormal{Opt} $ & En  & $ \textnormal{T}_{\textnormal{en}} $  & $\textnormal{T}/ \abs{\bar{\X}} $ & PE & HV \\\hline
				
				\multirow{6}{*}{10} &\multirow{6}{*}{15}
				&\multirow{2}{*}{$C_{1}$} 
				&\ref{alg1}			&57.47	&21.60	&0.3763	&5.00	&0.09	&0.93	&0.4024	&1.9112 & \multirow{6}{*}{17}
				&\multirow{2}{*}{1} 
				&\ref{alg1} 		&53.71	&21.09	&0.3931	&5.00	&0.08	&1.00	&0.5665	&2.1873\\
				&&&\ref{alg2} 		&60.73	&20.42	&0.3363	&4.00	&0.16	&0.86	&0.2527	&1.0413 			&&&\ref{alg2} 		&57.35	&19.79	&0.3450	&4.00	&0.17	&0.91	&0.3473	&1.2154\\ \cline{3-12} \cline{14-23}
				&&\multirow{2}{*}{$C_{2}$} 
				&\ref{alg1}			&60.60	&22.65	&0.3743	&3.80	&0.07	&0.92	&0.8190	&1.5472 &&\multirow{2}{*}{2} 
				&\ref{alg1} 		&56.00	&21.14	&0.3777	&5.00	&0.07	&0.95	&0.3973	&2.0040\\
				&&&\ref{alg2} 		&60.73	&20.49	&0.3376	&3.80	&0.35	&0.86	&0.4035	&0.8266 &&&\ref{alg2} 		&57.88	&19.98	&0.3453	&4.00	&0.16	&0.90	&0.2574	&1.1113\\\cline{3-12} \cline{14-23}
				& &\multirow{2}{*}{$C_{3}$} 
				&\ref{alg1}			&61.20	&22.73	&0.3719	&3.93	&0.07	&0.91	&0.1337	&2.5455 &&\multirow{2}{*}{$\infty$} &\ref{alg1} 		&53.41	&21.23	&0.3976	&5.06	&0.07	&0.99	&0.2826	&1.7489\\
				&&&\ref{alg2} 		&60.20	&20.40	&0.3392	&3.93	&0.39	&0.87	&0.0802	&1.7527 &&&\ref{alg2} 		&58.18	&20.07	&0.3450	&4.00	&0.17	&0.90	&0.1634	&1.0354\\\hline		
				
				\multirow{6}{*}{15} & \multirow{6}{*}{12}
				&\multirow{2}{*}{$C_{1}$}
				&\ref{alg1} 		&54.92	&21.38	&0.3908	&5.00	&0.10	&0.97	&0.5544	&2.3338 & \multirow{6}{*}{17}
				&\multirow{2}{*}{1} 
				&\ref{alg1} 		&57.65	&22.04	&0.3831	&5.00	&0.08	&0.92	&0.7394	&2.1807\\
				&&&\ref{alg2} 		&60.17	&20.44	&0.3401	&4.00	&0.17	&0.87	&0.2843	&1.3300 	&&&\ref{alg2} 		&63.24	&20.72	&0.3278	&4.00	&0.16	&0.82	&0.3808	&0.8593\\\cline{3-12} \cline{14-23}
				&&\multirow{2}{*}{$C_{2}$}
				&\ref{alg1} 		&58.58	&22.44	&0.3839	&3.67	&0.07	&0.95	&1.1556	&2.2875 &&\multirow{2}{*}{2} 
				&\ref{alg1} 		&61.82	&22.47	&0.3637	&5.00	&0.07	&0.86	&0.6205	&2.2184\\
				&&&\ref{alg2} 		&60.25	&20.49	&0.3405	&3.92	&0.41	&0.87	&0.4582	&0.9734 &&&\ref{alg2} 		&63.88	&20.83	&0.3261	&4.00	&0.16	&0.82	&0.2966	&1.0616\\\cline{3-12} \cline{14-23}
				&&\multirow{2}{*}{$C_{3}$}
				&\ref{alg1} 		&60.33	&22.81	&0.3785	&4.00	&0.07	&0.92	&0.1754	&2.2426 &&\multirow{2}{*}{$\infty$} 
				&\ref{alg1} 		&60.94	&22.54	&0.3700	&5.18	&0.07	&0.87	&0.3284	&2.0562\\
				&&&\ref{alg2} 	 	&61.33	&20.72	&0.3380	&4.00	&0.42	&0.85	&0.0958	&1.6479&&&\ref{alg2} 		&64.06	&20.90	&0.3264	&4.00	&0.16	&0.81	&0.1807	&1.2320\\\hline
				
				\multirow{6}{*}{20} & \multirow{6}{*}{15}
				&\multirow{2}{*}{$C_{1}$}
				&\ref{alg1} 		&58.40	&22.36	&0.3837	&5.00	&0.09	&0.91	&0.4571	&2.8328 & \multirow{6}{*}{18}
				&\multirow{2}{*}{1} 
				&\ref{alg1} 		&61.17	&22.51	&0.3686	&5.00	&0.08	&0.87	&0.7294	&3.0080\\
				&&&\ref{alg2} 	 	&62.67	&21.24	&0.3390	&4.00	&0.16	&0.83	&0.2631	&1.5824&&&\ref{alg2} 		&63.83	&20.90	&0.3274	&4.00	&0.15	&0.82	&0.3499	&1.2083\\\cline{3-12} \cline{14-23}
				&&\multirow{2}{*}{$C_{2}$}
				&\ref{alg1} 		&60.73	&23.10	&0.3809	&3.80	&0.07	&0.91	&0.9700	&2.0552 &&\multirow{2}{*}{2} 
				&\ref{alg1} 		&60.11	&22.12	&0.3684	&5.00	&0.07	&0.88	&0.4588	&2.6591\\
				&&&\ref{alg2} 	 	&61.80	&20.88	&0.3382	&4.00	&0.41	&0.85	&0.4777	&1.3532 &&&\ref{alg2} 		&64.00	&20.98	&0.3279	&4.00	&0.16	&0.81	&0.2757	&1.4626\\\cline{3-12} \cline{14-23}
				&&\multirow{2}{*}{$C_{3}$}
				&\ref{alg1} 		&62.33	&23.39	&0.3756	&4.00	&0.07	&0.89	&0.1390	&1.9080 &&\multirow{2}{*}{$\infty$} 
				&\ref{alg1} 		&59.28	&22.21	&0.3753	&5.00	&0.07	&0.90	&0.2694	&2.6331\\
				&&&\ref{alg2} 	 	&62.27	&21.06	&0.3384	&4.00	&0.41	&0.84	&0.0953	&1.3192 &&&\ref{alg2} 		&64.00	&21.10	&0.3300	&4.00	&0.15	&0.81	&0.1729	&1.5567\\\hline 
				
				\multirow{6}{*}{25} & \multirow{6}{*}{12}
				&\multirow{2}{*}{$C_{1}$}
				&\ref{alg1} 	 	&54.58	&21.19	&0.3895	&5.00	&0.08	&0.98	&0.5988	&2.2838 & \multirow{6}{*}{14}
				&\multirow{2}{*}{1} 
				&\ref{alg1} 		&58.79	&21.74	&0.3703	&5.00	&0.07	&0.90	&0.9830	&2.3330\\
				&&&\ref{alg2} 	 	&61.08	&20.54	&0.3366	&4.00	&0.16	&0.85	&0.3339	&1.3099 &&&\ref{alg2} 		&64.64	&21.23	&0.3286	&4.00	&0.16	&0.81	&0.4813	&0.8697\\\cline{3-12} \cline{14-23}
				&&\multirow{2}{*}{$C_{2}$}
				&\ref{alg1} 		&56.67	&21.77	&0.3845	&3.58	&0.06	&0.98	&1.7656	&2.6061 &&\multirow{2}{*}{2} 
				&\ref{alg1} 		&57.93	&21.49	&0.3714	&5.00	&0.07	&0.91	&0.6971	&2.3461\\
				&&&\ref{alg2} 	 	&61.75	&20.79	&0.3369	&4.00	&0.41	&0.85	&0.5857	&1.4343 &&&\ref{alg2} 		&64.79	&21.25	&0.3282	&4.00	&0.15	&0.81	&0.3578	&1.2700\\\cline{3-12} \cline{14-23}
				&&\multirow{2}{*}{$C_{3}$}
				&\ref{alg1} 		&57.67	&21.89	&0.3799	&3.83	&0.07	&0.96	&0.2698	&2.4895 &&\multirow{2}{*}{$\infty$} 
				&\ref{alg1} 		&57.36	&21.60	&0.3767	&5.00	&0.06	&0.92	&0.4166	&2.6032\\
				&&&\ref{alg2} 	 	&62.17	&20.88	&0.3360	&4.00	&0.41	&0.84	&0.1346	&1.5884&&&\ref{alg2} 		&64.71	&21.26	&0.3286	&4.00	&0.16	&0.81	&0.2241	&1.4588\\\hline
				
				\multirow{6}{*}{30} & \multirow{6}{*}{11}
				&\multirow{2}{*}{$C_{1}$}
				&\ref{alg1} 		&55.45	&21.11	&0.3829	&5.00	&0.08	&0.98	&0.6844	&2.8334 & \multirow{6}{*}{16}
				&\multirow{2}{*}{1} 
				&\ref{alg1} 		&57.44	&21.51	&0.3751	&5.00	&0.08	&0.93	&1.0316	&2.1772\\
				&&&\ref{alg2} 	 	&59.55	&20.08	&0.3375	&4.00	&0.16	&0.88	&0.4049	&1.5810 &&&\ref{alg2} 		&59.75	&20.08	&0.3363	&4.00	&0.16	&0.88	&0.5999	&0.8015\\\cline{3-12} \cline{14-23}
				&&\multirow{2}{*}{$C_{2}$}
				&\ref{alg1} 		&58.91	&22.26	&0.3783	&3.73	&0.07	&0.95	&1.9284	&2.1519 &&\multirow{2}{*}{2} 
				&\ref{alg1} 		&55.63	&20.98	&0.3777	&5.00	&0.07	&0.96	&0.7395	&2.5195\\
				&&&\ref{alg2} 	 	&59.36	&20.03	&0.3378	&3.91	&0.43	&0.88	&0.7094	&1.1776 &&&\ref{alg2} 		&60.00	&20.11	&0.3355	&4.00	&0.16	&0.87	&0.4591	&1.3388\\\cline{3-12} \cline{14-23}
				&&\multirow{2}{*}{$C_{3}$}
				&\ref{alg1} 		&60.09	&22.50	&0.3747	&3.82	&0.07	&0.93	&0.2666	&2.6265 &&\multirow{2}{*}{$\infty$} 
				&\ref{alg1} 		&56.00	&21.35	&0.3815	&5.00	&0.07	&0.95	&0.4387	&2.4787\\
				&&&\ref{alg2} 	 	&59.73	&20.18	&0.3382	&3.91	&0.39	&0.87	&0.1357	&1.6780 &&&\ref{alg2} 		&60.69	&20.29	&0.3345	&4.00	&0.16	&0.86	&0.2922	&1.4639\\\hline			
			\end{tabular}
		}
		\caption{Results for randomly generated instances of \Cref{example:3} with different ordering cones (left) and with different norms used in \eqref{PS(v)} (right), when the algorithms are run for T=$50$ seconds.}
		\label{table:3}
	\end{table}

	\begin{figure}[h!]
		\centering
		\includegraphics[scale=0.12]{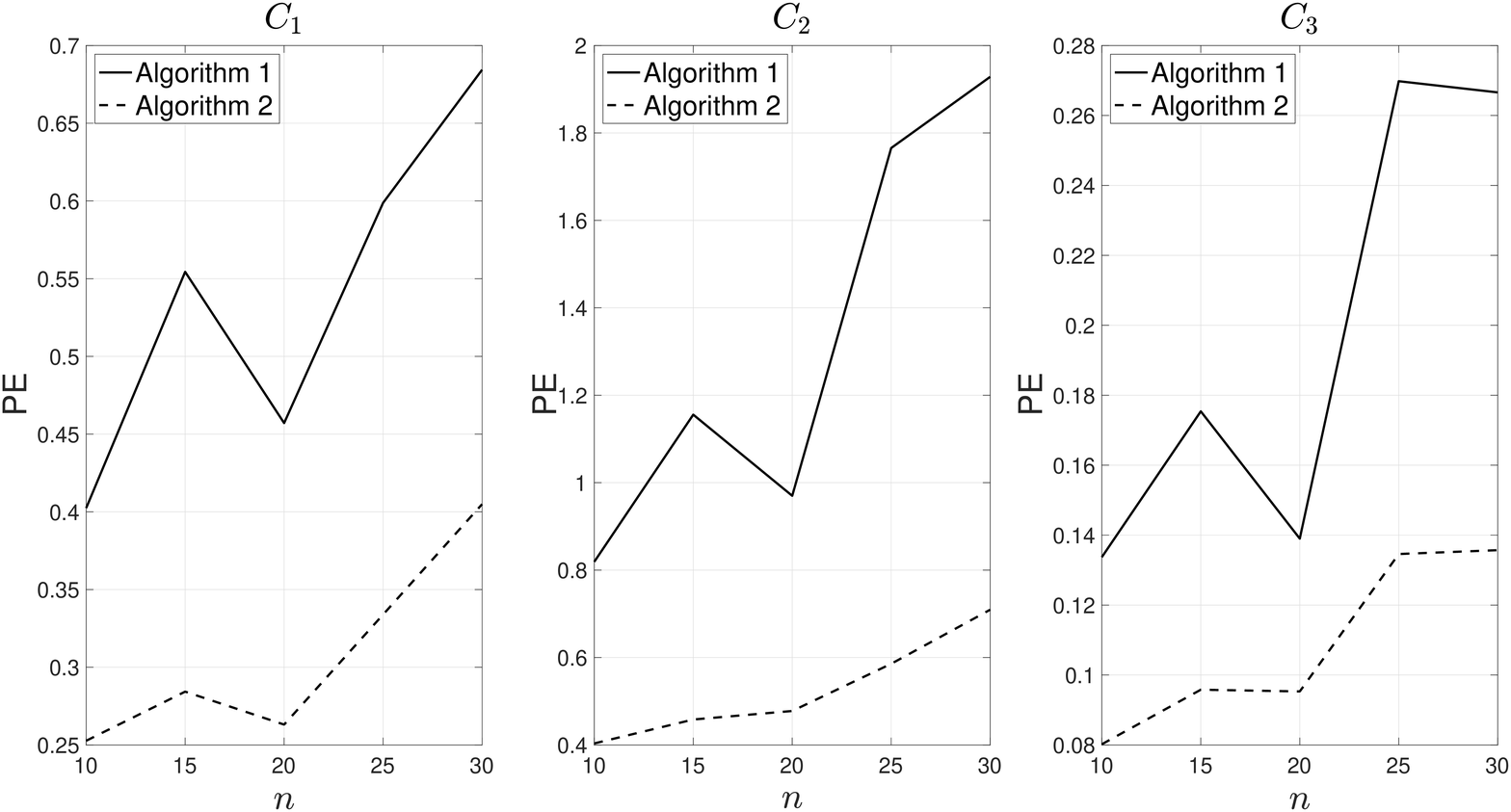}
		\includegraphics[scale=0.12]{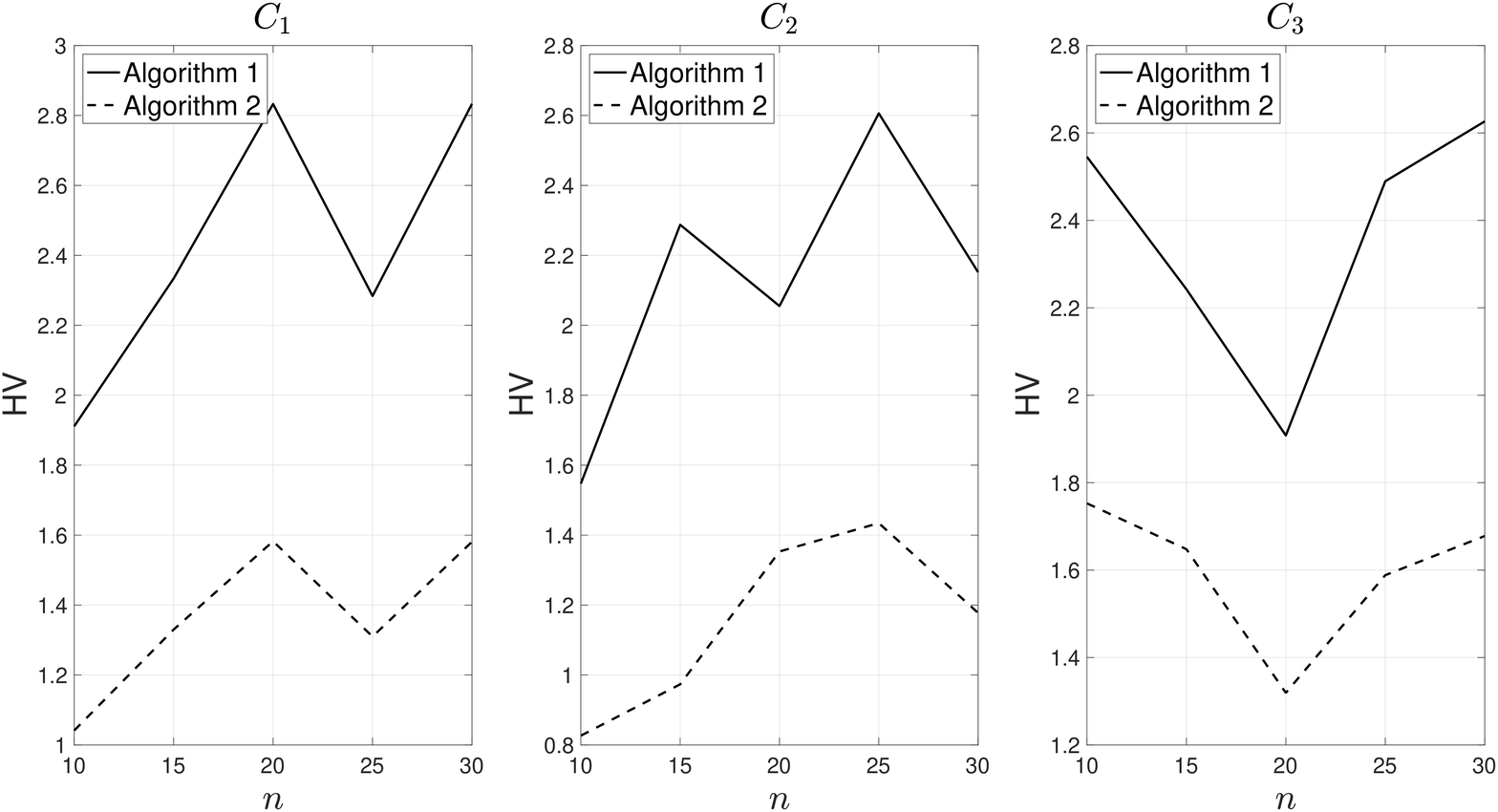}
		\caption{Average primal error (first \rev{group}) and HV (second \rev{group}) values of random instances of \Cref{example:3} for ordering cones $C_1 $ (left), $C_2 $ (middle) and $C_3 $ (right) when the algorithms are run under time limit of 50 seconds.}
		\label{fig:9}
	\end{figure}

	\begin{figure}[h!]
		\centering
		\includegraphics[scale=0.12]{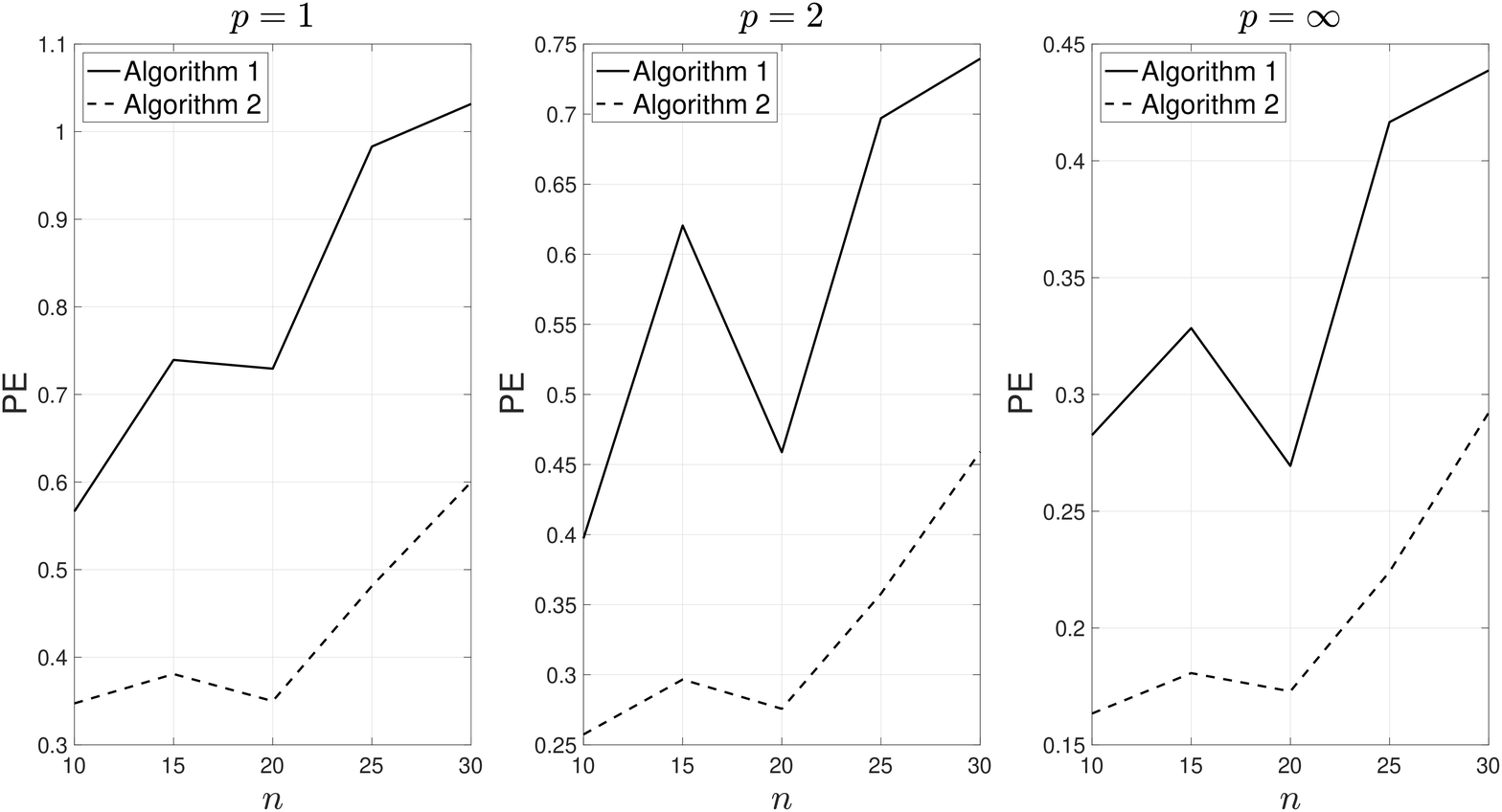}
		\includegraphics[scale=0.12]{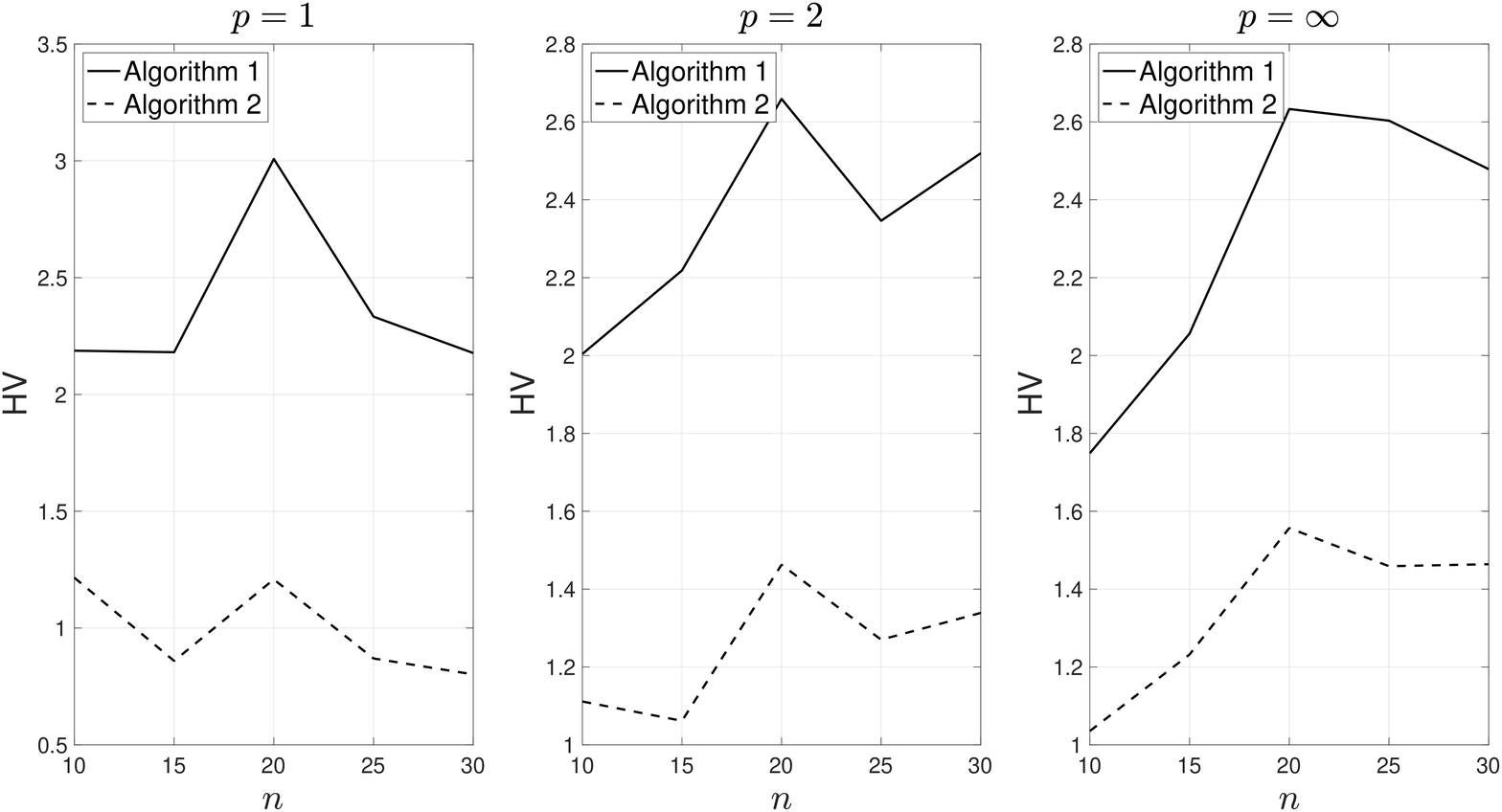}
		\caption{Average primal error (first \rev{group}) and HV (second \rev{group}) values of random instances of \Cref{example:3} with $\ell_p$ norms for $p=1$ (left), $p=2$ (middle) and $p=\infty$ (right), when the algorithms are run for 50 seconds.}
		\label{fig:11}
	\end{figure}
	\subsection{\rev{Comparison with algorithms from the literature}} \label{subsect:compar_lit}
	
	\rev{We compare the performance of Algorithms \ref{alg1} and \ref{alg2} with similar ones from the literature; in particular with the following algorithms which guarantee returning polyhedral inner and outer approximations to the upper image: the primal (LRU-P) and dual (LRU-D) algorithms from \cite{ulus_2014} and the (primal) algorithm (DLSW) from \cite{dorfler2021benson}.\footnote{\rev{We use MATLAB implementations of these algorithms that were also used in \cite{ulus_2014} and \cite{irem}, respectively.}}} 
	
	\rev{For the first set of experiments we use the hundred randomly generated problem instances from \Cref{subsubsect:tests1}. We solve each instance under runtime limits of 50 seconds and 100 seconds and compare the performances of Algorithms \ref{alg1}, \ref{alg2}, LRU-P, LRU-D and DLSW via empirical cumulative distribution functions of the proximity measures, see \Cref{fig:performance}. In \cite{bechmarking}, when comparing different solvers with respect to CPU times, they scale the data points by the minimum over different solvers and plot the corresponding empirical cumulative distributions, which are called \emph{performance profiles}.  } 
	
	\begin{figure}[h!]
		\centering
		\includegraphics[scale=0.21]{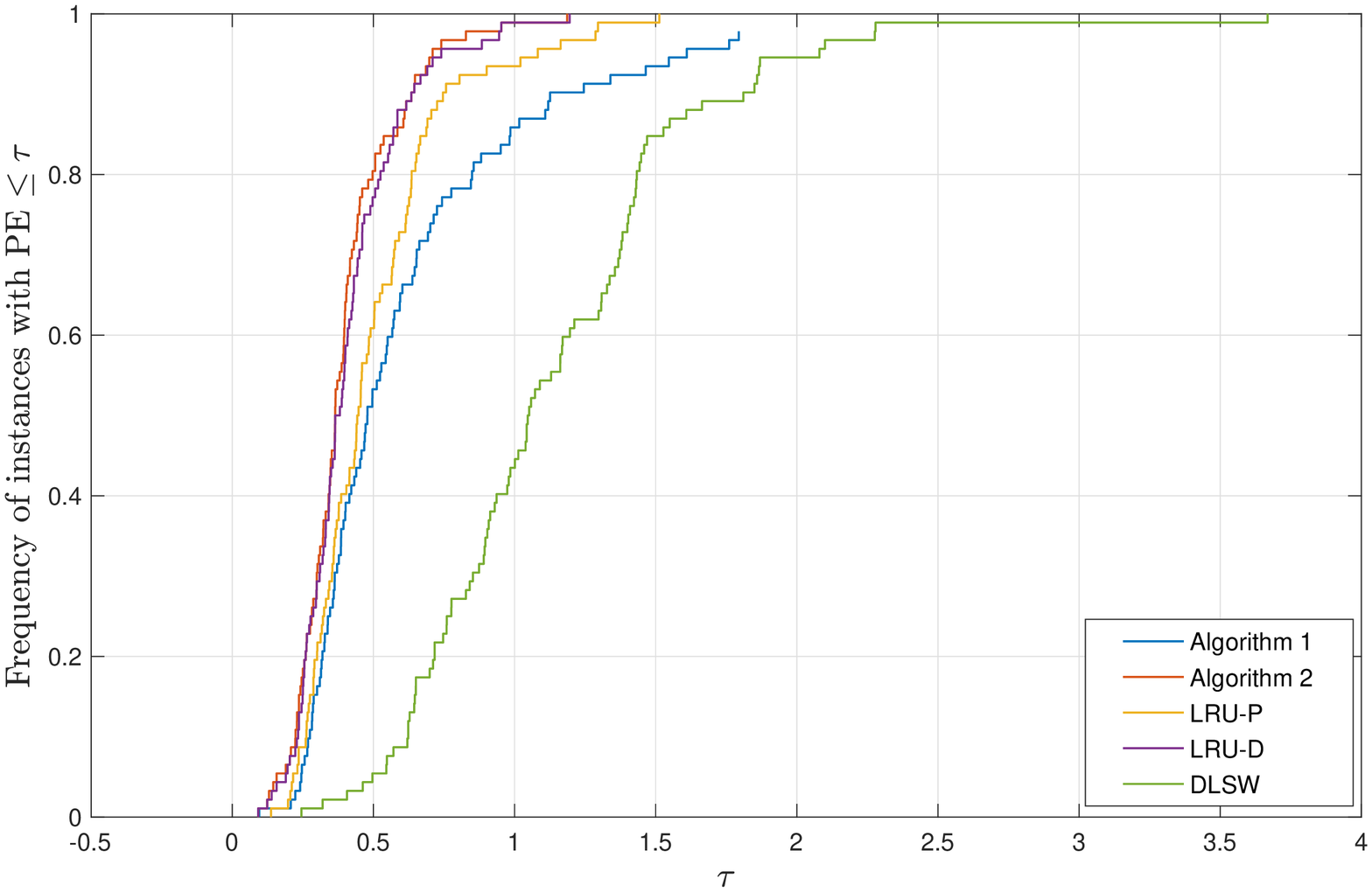}
		\includegraphics[scale=0.21]{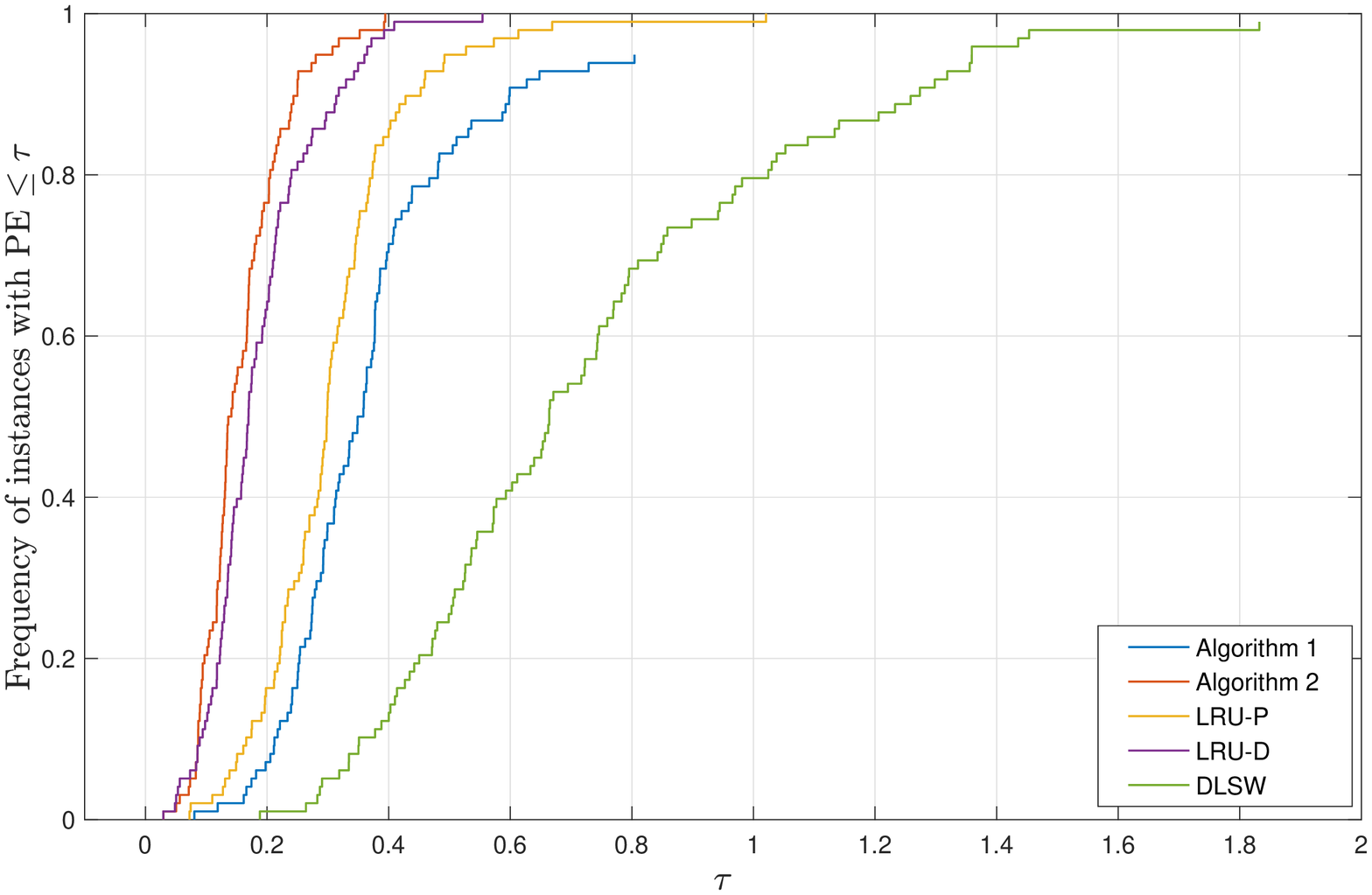}\\
		\includegraphics[scale=0.21]{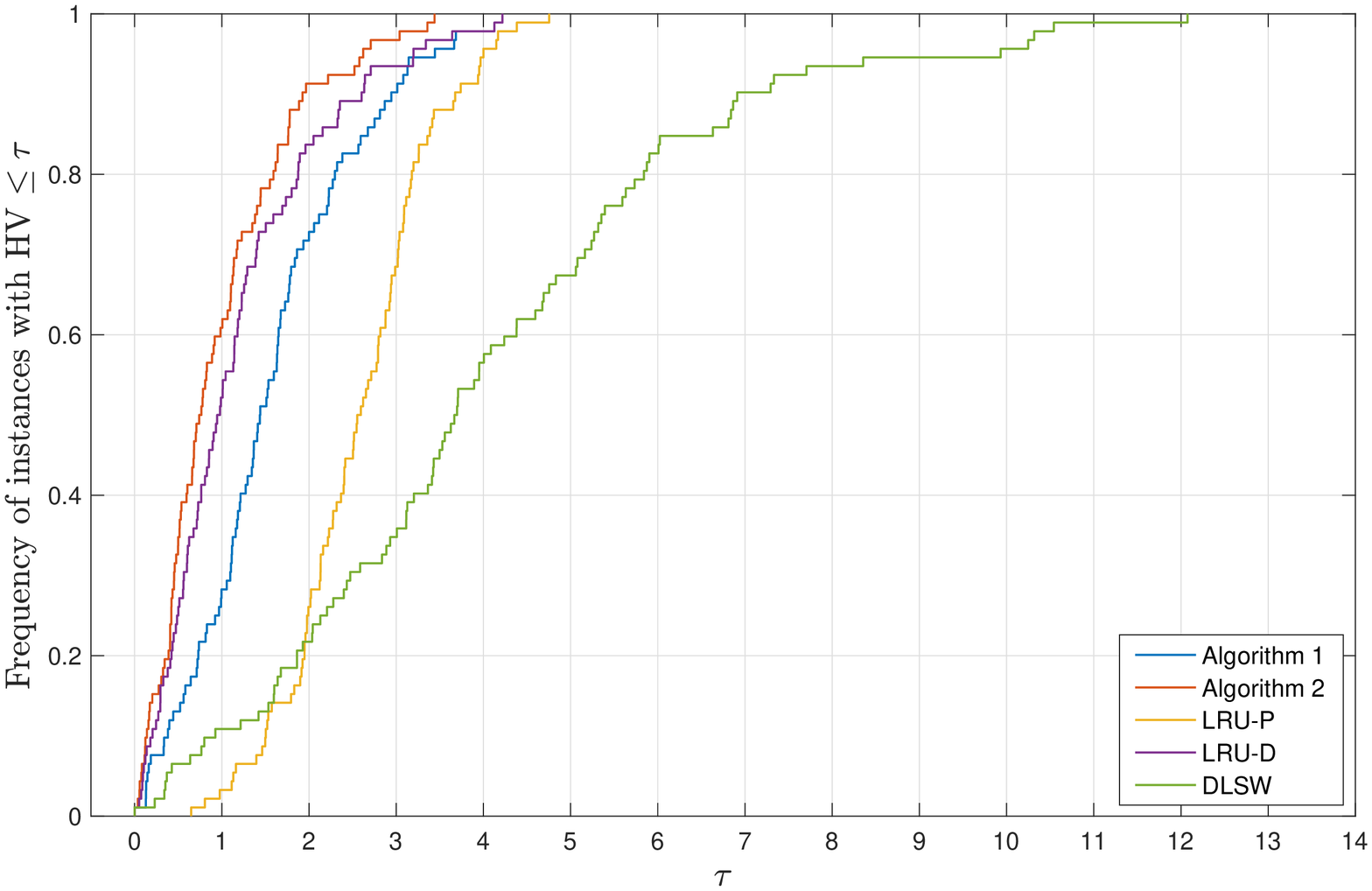}
		\includegraphics[scale=0.21]{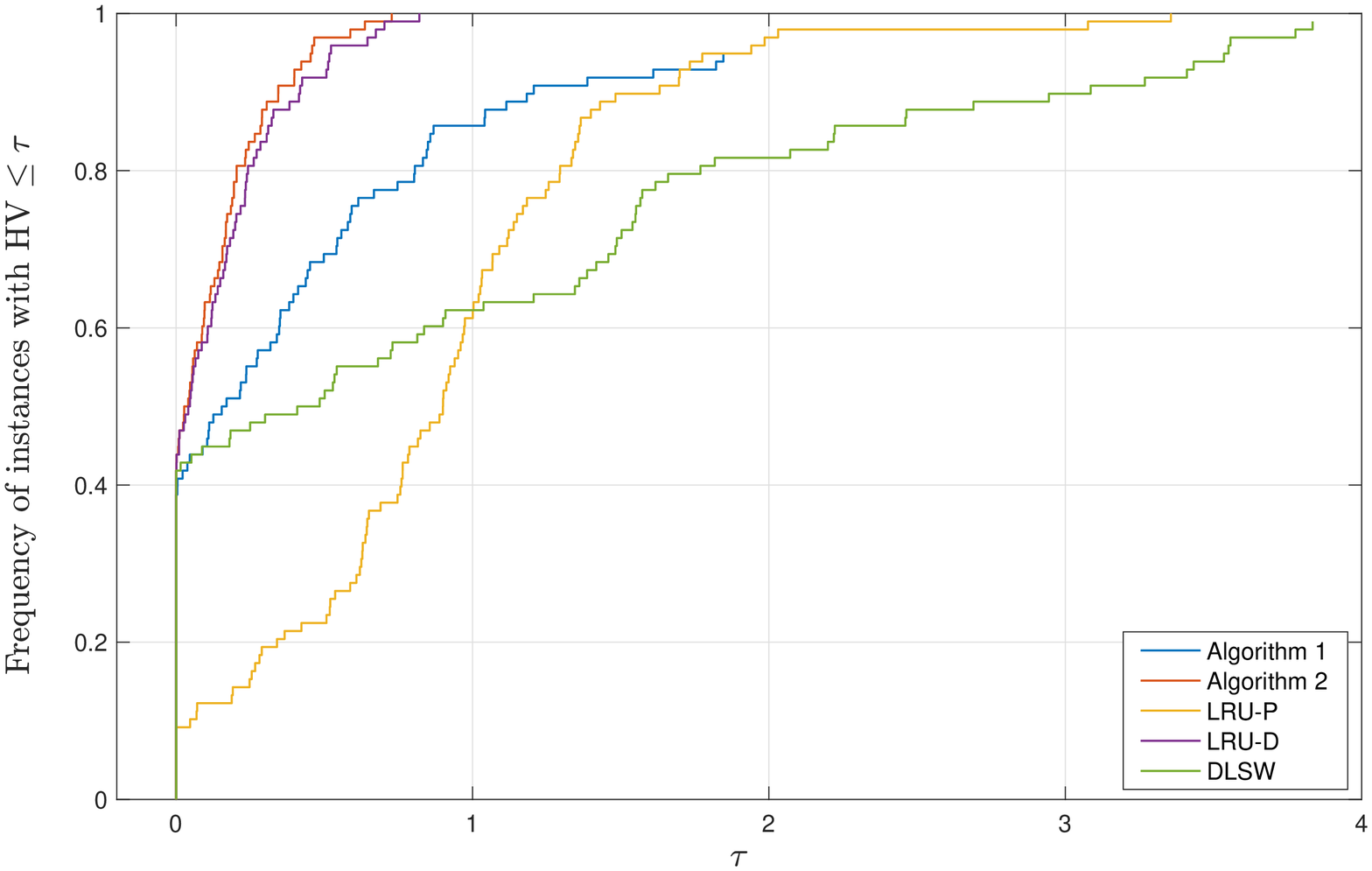} 	
		\caption{\rev{Empirical cumulative distribution functions for PE (first row) and HV (second row) values of random instances from \Cref{subsubsect:tests1} when the algorithms are run under time limit of 50 seconds (left) and 100 seconds (right).}}
		\label{fig:performance}
	\end{figure}
	
	\rev{From \Cref{fig:performance}, we see that the dual algorithms perform better than the primal ones in HV and PE under fixed run time. Moreover, \Cref{alg2} performs slightly better than LRU-D.}
	
	\rev{Next, we compare the algorithms over different examples from the literature. \Cref{example:4} is a special case of \Cref{example:3}, which can be also seen in \cite{ess_2011,ulus_2014}. In \Cref{example:5}, the objective functions are nonlinear while the constraints are linear, see \cite[Examples 5.8]{ess_2011}, \cite{miettinen}.
		We solve these examples for different norms and ordering cones as in \Cref{subsubsect:tests_2} under fixed runtime of 50 seconds. Since the objective function of \Cref{example:5} is quadratic, it is not $C$-convex for $C = C_2$, hence we solve it only with ordering cones $C_1$ and $C_3$.} 
	\rev{
		\begin{example}\label{example:4}
			We consider the following problem
			\begin{equation*}
			\begin{aligned}
			&\textnormal{minimize } f(x)=x \textnormal{ with respect to } \leq_{C} \\
			&\textnormal{ subject to } \norm{x-e}_2 \leq 1,\ x\in\R^3.
			\end{aligned}
			\end{equation*}
		\end{example}
		\begin{example}\label{example:5}
			Let $a^1=(1,1)^\T$, $a^2=(2,3)^\T$, $a^3=(4,2)^\T$. Consider
			\begin{equation*}
			\begin{aligned}
			&\textnormal{minimize } f(x)=(\norm{x-a^1}^2_2,\norm{x-a^2}^2_2,\norm{x-a^3}^2_2) \textnormal{ with respect to } \leq_{C} \\
			&\textnormal{ subject to } x_1+2x_2\leq10,\ 0\leq x_1\leq10,\ 0\leq x_2\leq4,\ x\in\R^2.
			\end{aligned}
			\end{equation*}
		\end{example}
	}
	
	\rev{\Cref{tab:other_ex} shows the Opt, En, $\abs{\bar{\X}}$, PE and HV values that the algorithms return when run for 50 seconds.} \rev{For both examples the minimum HV values are attained by \Cref{alg2} in each setting. The same holds true also for PE values for \Cref{example:5}. However, the PE values returned by the algorithms are comparable for \Cref{example:4}.}
	

	\begin{table}[htbp]
		\centering
		\resizebox{0.58\textwidth}{!}{	\begin{tabular}{|c|c|c|ccccc|ccccc}
				\cline{4-13}    \multicolumn{1}{r}{} & \multicolumn{1}{r}{} &       & \multicolumn{5}{c|}{\Cref{example:4}}        & \multicolumn{5}{c|}{\Cref{example:5}} \\
				\hline
				Cone  & $p$     & Alg   & Opt   & En  & $\abs{\X}$    & PE    & HV    & \multicolumn{1}{c}{Opt} & \multicolumn{1}{c}{En} & \multicolumn{1}{c}{$\abs{\X}$} & \multicolumn{1}{c}{PE} & \multicolumn{1}{c|}{HV} \\
				\hline
				\multirow{15}[6]{*}{$C_1$} & \multirow{5}[2]{*}{1} & 1     & 38    & 5     & 38    & 0,0339 & 1,1788 & \multicolumn{1}{c}{31} & \multicolumn{1}{c}{4} & \multicolumn{1}{c}{31} & \multicolumn{1}{c}{0,4289} & \multicolumn{1}{c|}{0,00507} \\
				&       & 2     & 38    & 4     & 38    & 0,0354 & 0,9296 & \multicolumn{1}{c}{36} & \multicolumn{1}{c}{4} & \multicolumn{1}{c}{36} & \multicolumn{1}{c}{0,2046} & \multicolumn{1}{c|}{0,00130} \\
				&       & LRU-P & 41    & 5     & 41    & 0,0319 & 2,5017 & \multicolumn{1}{c}{33} & \multicolumn{1}{c}{4} & \multicolumn{1}{c}{33} & \multicolumn{1}{c}{0,4288} & \multicolumn{1}{c|}{0,35351\footnotemark} \\
				&       & LRU-D & 40    & 5     & 34    & 0,0354 & 1,1046 & \multicolumn{1}{c}{34} & \multicolumn{1}{c}{4} & \multicolumn{1}{c}{32} & \multicolumn{1}{c}{0,2790} & \multicolumn{1}{c|}{0,00174} \\
				&       & DLSW  & 46    & 11    & 13    & 0,0898 & 3,7211 & \multicolumn{1}{c}{-} & \multicolumn{1}{c}{-} & \multicolumn{1}{c}{-} & \multicolumn{1}{c}{-} & \multicolumn{1}{c|}{-} \\
				\cline{2-13}          & \multirow{5}[2]{*}{2} & 1     & 39    & 5     & 39    & 0,0267 & 3,9998 & \multicolumn{1}{c}{31} & \multicolumn{1}{c}{4} & \multicolumn{1}{c}{31} & \multicolumn{1}{c}{0,3990} & \multicolumn{1}{c|}{0,00451} \\
				&       & 2     & 41    & 4     & 41    & 0,0259 & 3,9608 & \multicolumn{1}{c}{36} & \multicolumn{1}{c}{4} & \multicolumn{1}{c}{36} & \multicolumn{1}{c}{0,1762} & \multicolumn{1}{c|}{0,00129} \\
				&       & LRU-P & 44    & 5     & 44    & 0,0266 & 4,0791 & \multicolumn{1}{c}{34} & \multicolumn{1}{c}{4} & \multicolumn{1}{c}{34} & \multicolumn{1}{c}{0,3989} & \multicolumn{1}{c|}{0,35351} \\
				&       & LRU-D & 40    & 5     & 34    & 0,0259 & 4,7979 & \multicolumn{1}{c}{33} & \multicolumn{1}{c}{4} & \multicolumn{1}{c}{31} & \multicolumn{1}{c}{0,2672} & \multicolumn{1}{c|}{0,00174} \\
				&       & DLSW  & 44    & 13    & 15    & 0,0824 & 10,3820 & \multicolumn{1}{c}{44} & \multicolumn{1}{c}{8} & \multicolumn{1}{c}{10} & \multicolumn{1}{c}{0,8495} & \multicolumn{1}{c|}{0,01205} \\
				\cline{2-13}          & \multirow{5}[2]{*}{$\infty$} & 1     & 34    & 5     & 34    & 0,0183 & 3,9642 & \multicolumn{1}{c}{27} & \multicolumn{1}{c}{4} & \multicolumn{1}{c}{27} & \multicolumn{1}{c}{0,3126} & \multicolumn{1}{c|}{0,00003} \\
				&       & 2     & 37    & 4     & 37    & 0,0189 & 3,6921 & \multicolumn{1}{c}{36} & \multicolumn{1}{c}{4} & \multicolumn{1}{c}{36} & \multicolumn{1}{c}{0,1206} & \multicolumn{1}{c|}{0,00001} \\
				&       & LRU-P & 37    & 5     & 37    & 0,0183 & 4,1461 & \multicolumn{1}{c}{34} & \multicolumn{1}{c}{4} & \multicolumn{1}{c}{34} & \multicolumn{1}{c}{0,3124} & \multicolumn{1}{c|}{0,04624} \\
				&       & LRU-D & 37    & 5     & 31    & 0,0189 & 4,3730 & \multicolumn{1}{c}{37} & \multicolumn{1}{c}{5} & \multicolumn{1}{c}{34} & \multicolumn{1}{c}{0,1206} & \multicolumn{1}{c|}{0,00001} \\
				&       & DLSW  & 42    & 9     & 11    & 0,0635 & 12,5425 & \multicolumn{1}{c}{45} & \multicolumn{1}{c}{10} & \multicolumn{1}{c}{12} & \multicolumn{1}{c}{0,3155} & \multicolumn{1}{c|}{0,00006} \\
				\hline
				\multirow{5}[2]{*}{$C_3$} & \multirow{5}[2]{*}{2} & 1     & 36    & 3     & 36    & 0,0166 & 2,3381 & \multicolumn{1}{c}{31} & \multicolumn{1}{c}{3} & \multicolumn{1}{c}{31} & \multicolumn{1}{c}{0,1964} & \multicolumn{1}{c|}{2,08492} \\
				&       & 2     & 41    & 3     & 41    & 0,0148 & 2,1469 & \multicolumn{1}{c}{36} & \multicolumn{1}{c}{3} & \multicolumn{1}{c}{36} & \multicolumn{1}{c}{0,1346} & \multicolumn{1}{c|}{1,66544} \\
				&       & LRU-P & 40    & 3     & 40    & 0,0541 & 4,1121 & \multicolumn{1}{c}{30} & \multicolumn{1}{c}{3} & \multicolumn{1}{c}{30} & \multicolumn{1}{c}{0,1964} & \multicolumn{1}{c|}{4,38963} \\
				&       & LRU-D & 35    & 4     & 33    & 0,0274 & 3,0475 & \multicolumn{1}{c}{30} & \multicolumn{1}{c}{4} & \multicolumn{1}{c}{24} & \multicolumn{1}{c}{0,3065} & \multicolumn{1}{c|}{3,34918} \\
				&       & DLSW  & 45    & 9     & 14    & 0,0541 & 4,1913 & \multicolumn{1}{c}{42} & \multicolumn{1}{c}{9} & \multicolumn{1}{c}{14} & \multicolumn{1}{c}{0,1964} & \multicolumn{1}{c|}{3,58336} \\
				\hline
				\multirow{5}[2]{*}{$C_2$} & \multirow{5}[2]{*}{2} & 1     & 36    & 3     & 36    & 0,1926 & 6,0955 &       &       &       &       &  \\
				&       & 2     & 41    & 3     & 41    & 0,1150 & 4,1466 &       &       &       &       &  \\
				&       & LRU-P & 44    & 3     & 44    & 0,0498 & 4,5592 &       &       &       &       &  \\
				&       & LRU-D & 36    & 4     & 33    & 0,1393 & 8,7210 &       &       &       &       &  \\
				&       & DLSW  & 43    & 9     & 14    & 0,1064 & 9,0697 &       &       &       &       &  \\
				\cline{1-8}    \end{tabular}}%
		\caption{\rev{Results for Examples \ref{example:4} and \ref{example:5} with different ordering cones and with different norms used in \eqref{PS(v)}, when the algorithms are run for T=$50$ seconds.}}
		\label{tab:other_ex}%
	\end{table}%
	\footnotetext{\rev{In \Cref{example:4} with cone $C_1$, the outer approximations returned by LRU-P contain outlier vertices in the sense that their distances to $f(\X)$ are quite large while distances to $\P$ are small. This explains high HV values compared to small PE values.}}

	\section*{Acknowledgments}
	
	This work is supported by the 3501 program of T\"{U}B{\.I}TAK (Scientific \& Technological Research Council of Turkey), Project No. 118M479. The authors thank three anonymous reviewers and an associate editor, as well as Andreas L\"{o}hne and \"{O}zlem Karsu, for their valuable feedback and suggestions for improvement on earlier versions of the paper. They also thank \.Irem Nur Keskin for sharing MATLAB implementation of DLSW algorithm which was also used in \cite{irem}.
	
	\appendix
	\section{(Alternative) Proofs of some results}
	\begin{proof}[\textbf{Proof of \Cref{prop:Dcone}}]
		Let $((w^i)^{\T},\alpha_i)^{\T}\in\D$ and $\lambda_i\geq 0$ for each $i\in\{1,\dots,n\} $, where $n\in\mathbb{N}$. Since $\W=C^+ $ is a convex cone and we have $w^i\in C^+ $ for each $i\in\{1,\dots,n\}$, we have $\sum_{i=1}^{n}\lambda_iw^i \in \W $. Moreover, we have $\alpha_i\leq\inf_{x\in\X} (w^i)^{\T}f(x)$ for each $i\in\{1,\ldots,n\}$, which implies that
		\begin{equation*}
		\sum_{i=1}^{n}\lambda_i\alpha_i \leq \sum_{i=1}^{n}\lambda_i \inf_{x\in\X}(w^i)^{\T}f(x) \leq \inf_{x\in\X}\of{\sum_{i=1}^{n}\lambda_iw^i}^{\T}f(x).
		\end{equation*}
		Hence, $\sum_{i=1}^{n}\lambda_i((w^i)^{\T},\alpha_i)^{\T}\in \D$. It follows that $\D$ is a convex cone.	
		
	Note that the function $w\mapsto p^w$ is continuous as a concave function on $\mathcal{W}$ with finite values. Moreover, $\D \subseteq \W\times\R$ coincides with the hypograph of this function. Hence, $\D$ is a closed set. 
	\end{proof}
	 
	
	\begin{proof}[\textbf{Proof of \Cref{lem:duality_1}}]
		\begin{enumerate}[(a)]
			\item Suppose that $y$ is a weakly $C$-minimal element of $\P$. By \Cref{prop:upperimage}, $y=f(\bar{x})+\bar{c}$ for some $\bar{x}\in\X$ and $\bar{c}\in C$. Moreover, as $y$ is a weakly $C$-minimal element of $\P$, we have $y\in \bd\P$ \cite[Corollary 1.48]{lohne_2011}. Then, there exist $\bar{w}\in\R^q $ and $\bar{\alpha}\in\R$ such that $H\coloneqq\{\tilde{y}\in\R^q\mid\bar{w}^{\T}\tilde{y}=\bar{\alpha}\} $ supports $\P$ at $y$. In particular, we have
			\begin{equation}\label{eq:reccCheck}
			\inf_{\tilde{y}\in\P} \bar{w}^{\T}\tilde{y} =\bar{w}^{\T}y=\bar{\alpha}.
			\end{equation}
			By \Cref{rem:w_inC+}, \eqref{eq:reccCheck} implies that $\bar{w}\in C^+$. 
			Moreover, using \Cref{prop:upperimage} and the fact that $y=f(\bar{x})+\bar{c}$, \eqref{eq:reccCheck} can be rewritten as 
			\begin{equation*}
			\inf_{x\in\X}\bar{w}^{\T}f(x)+\inf_{c\in C}\bar{w}^{\T}c=\bar{w}^{\T}f(\bar{x})+\bar{w}^{\T}\bar{c}.
			\end{equation*}
			Noting that we have $\inf_{c\in C}\bar{w}^{\T}c=0$,  $\inf_{x\in\X}\bar{w}^{\T}f(x) \leq \bar{w}^{\T}f(\bar{x})$ and $\bar{w}^{\T}\bar{c}\geq0$, we obtain
			$\bar{w}^{\T}\bar{c}=0$ and $\bar{w}^{\T}f(\bar{x})= \inf_{x\in\X}\bar{w}^{\T}f(x)$. It follows that $\H^\ast(f(\bar{x}))= \H^\ast(y)$ since $y=f(\bar{x})+\bar{c}$ and $\bar{w}^{\T}\bar{c} = 0$. Moreover, $\bar{x} $ is an optimal solution to the problem $\text{(WS(}\bar{w}\text{))}$. Hence, 
			$H^\ast(f(\bar{x}))=H^\ast(y)$ is a supporting hyperplane of $\D$ at $\xi(\bar{w})$ such that $\D\subseteq \H^\ast(f(\bar{x}))$ by \Cref{prop:supp_halfsp}. Therefore, $H^\ast(f(\bar{x}))\cap\D$ is a proper face of $\D$ \cite[Section 18, page 162]{rockafellar_1970}.
			
			To show that $H^\ast(f(\bar{x}))\cap\D$ is a $K$-maximal proper face of $\D$, let $(\tilde{w}^{\T},\tilde{\alpha})^{\T}\in H^\ast(f(\bar{x}))\cap\D$ be arbitrary. Note that $\varphi(f(\bar{x}),\tilde{w},\tilde{\alpha})=\tilde{w}^{\T}f(\bar{x})-\tilde{\alpha}=0$. 
			On the other hand, the fact $\D\subseteq \H^\ast(f(\bar{x}))$ implies that $\varphi(f(\bar{x}),w,\alpha)=w^{\T}f(\bar{x})-\alpha\geq 0$ for each $(w^{\T},\alpha)^{\T}\in\D$.
			Together, these imply that $(\tilde{w}^{\T},\hat{\alpha})^{\T} \notin \D$ for every $\hat{\alpha}>\tilde{\alpha}$. 
			Hence, $(\tilde{w}^{\T},\tilde{\alpha})^{\T}$ is a $K$-maximal element of $\D$.
			
			Conversely, suppose that $H^\ast(y)\cap\D$ is a $K$-maximal proper face of $\D$. Hence, $H^\ast(y)$ is a supporting hyperplane of $\D$ \cite[Section 18, page 162]{rockafellar_1970}, and we have either $\D\subseteq \H^\ast(y) $ or $\D\subseteq\{(w^{\T},\alpha)^{\T}\in\R^{q+1}\mid\varphi(y,w,\alpha)\leq 0\} $. We claim that the former relation holds. Indeed, letting $(\tilde{w}^{\T},\tilde{\alpha})^{\T}\in H^\ast(y)\cap\D$, we have $\tilde{\alpha}\leq\inf_{x\in\X}\tilde{w}^{\T}f(x) $ and $\varphi(y,\tilde{w},\tilde{\alpha})=\tilde{w}^{\T}y-\tilde{\alpha}=0$. Then, $(\tilde{w}^{\T},\tilde{\alpha}-1)^{\T}\in \D$ and $\varphi(y,\tilde{w},\tilde{\alpha}-1)=\tilde{w}^{\T}y-\tilde{\alpha}+1>0$. Hence, $(\tilde{w}^{\T},\tilde{\alpha}-1)^{\T}\notin\{(w^{\T},\alpha)^{\T}\in\R^{q+1}\mid\varphi(y,w,\alpha)\leq 0\} $. Therefore, the claim holds and we have $\D\subseteq \H^\ast(y) $.
			
			Next, we show that $y\in\P$. To get a contradiction, suppose that $y\notin\P$. By separation theorem, there exists $\bar{w}\in\mathbb{R}^q\setminus\{0\} $ such that
			\[
			\bar{w}^{\T}y<\inf_{\bar{y}\in\P}\bar{w}^{\T}\bar{y}=:\bar{\alpha}.
			\]
			Note that $\bar{w}\in C^+$ by \Cref{rem:w_inC+}. Therefore, we have $(\bar{w}^{\T},\bar{\alpha})^{\T} \in \D\subseteq \H^\ast(y) $, which contradicts $\bar{w}^{\T}y-\bar{\alpha}<0 $. Hence, we have $y\in\P$.
			
			Finally, we show that $y$ is a weakly $C$-minimal element of $\P$. To get a contradiction, suppose that there exists $c \in \Int C $ with $y-c \in\P$. Without loss of generality, assume that $y-c \in \wMin_C \P$. From the proof of the previous implication, $\D \subseteq \H^\ast(y-c)$. Let $(w^{\T},\alpha)^{\T}\in H^\ast(y)\cap\D\subseteq \H^\ast(y-c)$. Then, we have $w^{\T}(y-c)-\alpha=w^{\T}y-w^{\T}c-\alpha\geq0$. 
			Note that $w^{\T}y-\alpha=0 $ since $(w^{\T},\alpha)^{\T}\in H^\ast(y)\cap\D$. Hence, we have $w^{\T}y-w^{\T}c-\alpha=-w^{\T}c\geq0$, which contradicts $w\in \W$ and $c \in \Int C $. Therefore, $y$ is a weakly $C$-minimal element of $\P$. 	
			
			\item Let $F^\ast$ be a $K$-maximal proper face of $\D$. Then, there exists a supporting hyperplane $H^\ast\subseteq \R^{q+1}$ of $\D$ such that $F^\ast=H^\ast\cap\D$ \cite[Section 18, page 162]{rockafellar_1970}. Then, we may write
			\[
			H^\ast = \{(w^{\T},\alpha)^{\T}\in\R^{q+1}\mid a^{\T}(w^{\T},\alpha)=b\}
			\]
			for some $a\in\R^{q+1}, b\in\R$. Without loss of generality, we may assume that
			\[
			\H^\ast\coloneqq\{(w^{\T},\alpha)^{\T}\in\R^{q+1}\mid a^{\T}(w^{\T},\alpha)^\T\geq b\} \supseteq \D.
			\]
			Since $\D$ is a convex cone, $F^\ast$ is also a convex cone \cite[Lemma 10.2]{glockner_2003}. Moreover, $F^\ast$ is closed as $\D$ is closed by \Cref{prop:Dcone}. 
			Therefore, $0\in F^\ast \subseteq H^\ast $. Hence, $b=0$.
			
			Next, we show that $a_{q+1}<0 $. For every $(\bar{w}^{\T},\bar{\alpha})^{\T}\in F^\ast$, the point $(\bar{w}^{\T},\bar{\alpha})^{\T}$ is a $K$-maximal element of $\D$ and it holds $a^{\T}(\bar{w}^{\T},\bar{\alpha})=0$. Moreover, $(\bar{w}^{\T},\bar{\alpha})^{\T}\in \H^\ast$ implies 
			$a^{\T}(w^{\T},\alpha)\geq0$. For every $\gamma >0$, since $(\bar{w}^{\T},\bar{\alpha}-\gamma)^{\T}\in\D  \subseteq \H^\ast$, we have $a^{\T}(\bar{w}^{\T},\bar{\alpha}-\gamma)=a^{\T}(w^{\T},\alpha)-\gamma a_{q+1}\geq 0$. 
			Therefore, $a_{q+1}\leq0 $ holds. If $a_{q+1}=0 $, then $a^{\T}(\bar{w}^{\T},\bar{\alpha}-1)^{\T}=0$ implies that $(\bar{w}^{\T},\bar{\alpha}-1)^{\T} \in F^\ast$ contradicting the $K$-maximality of $F^\ast$. Therefore, $a_{q+1}<0 $. 
			
			By setting
			\begin{equation*}
			\begin{aligned}
			y:=\bigg(\frac{-a_1}{a_{q+1}},\frac{-a_2}{a_{q+1}},\cdots,\frac{-a_q}{a_{q+1}}\bigg)^{\T},
			\end{aligned}
			\end{equation*}
			we obtain $H^\ast=H^\ast(y)$ and $\D \subseteq \H^\ast(y)$.
			
			Finally, we show that $y\in \P$. Assuming otherwise, there exists $\tilde{w}\in C^+$ such that $\tilde{w}^\T y<\inf_{x\in\X}\tilde{w}^\T f(x)\eqqcolon \tilde{\alpha}$ by separation arguments. Then, we obtain $(\tilde{w}^\T,\tilde{\alpha})^\T \in \D\setminus \H^\ast(y)$, which contradicts with $\D \subseteq \H^\ast(y)$. 
		\end{enumerate}
	\end{proof}

	
	\begin{proof}[\textbf{Proof of \Cref{lem:duality_2}}]
		\begin{enumerate}[(a)]
			\item Suppose that $(w^{\T},\alpha)^{\T}$ is a $K$-maximal point of $\D$. Clearly, $w\in C^+$, $\alpha=\inf_{x\in\X}w^{\T}f(x)$ and $(w^{\T},\alpha)^{\T}=\xi(w)$. Since $\X$ is a compact set, there exists an optimal solution $x^w\in\X$ to \eqref{WS(w)}. By \Cref{prop:supp_halfsp}, $H(\xi(w))=H(w,\alpha) $ is a supporting hyperplane of $\P$ at $f(x^{w})$ satisfying $\H(w,\alpha) \supseteq \P$. Then, $H(w,\alpha)\cap\P$ is a proper face of $\P$ \cite[Section 18, page 162]{rockafellar_1970}. To show that $H(w,\alpha)\cap\P$ is weakly $C$-minimal, let $\bar{y}\in H(w,\alpha)\cap\P$ be arbitrary. Since $\bar{y}\in H(w,\alpha)$ and $w \in C^+$, we have $\varphi(\bar{y}-c,w,\alpha) = w^{\T}\bar{y} - w^{\T}c - \alpha <0$ for every $c\in\Int C$. Note that each $y\in\P \subseteq \H(w,\alpha)$ satisfies $\varphi(y,w,\alpha)=w^{\T}y-\alpha\geq0 $. Then, $(\{\bar{y}\}-\Int C)\cap\P=\emptyset$, hence $\bar{y}$ is weakly $C$-minimal. 
			
			Conversely, suppose that $H(w,\alpha)\cap\P$ is a weakly $C$-minimal proper face of $\P$ such that $\H(w,\alpha)\supseteq\P$. Then, $H(w,\alpha) $ is a supporting hyperplane of $\P$ \cite[Section 18, page 162]{rockafellar_1970}. By \Cref{rem:w_inC+}, we have $w\in (\rec \P)^+\subseteq C^+$. Moreover, for each $y\in\P$, $\varphi(y,w,\alpha) = w^{\T}y - \alpha\geq 0$. This implies $\alpha\leq \inf_{y\in\P}w^{\T}y$, hence $(w^{\T}, \alpha)^{\T} \in \D$. On the other hand, let 
			$y=f(x)+c \in H(w,\alpha)\cap\P$ for some $x\in\X $ and $c\in C$. Since $\varphi(y,w,\alpha)=w^{\T}f(x)+w^{\T}c-\alpha=0$, we have  $\varphi(y,w,\alpha+\epsilon) = w^{\T}f(x) + w^{\T}c -\alpha -\epsilon<0$ for every $\epsilon >0$. This implies 
			\[
			\alpha+\epsilon>\inf_{x\in\X}w^{\T}f(x)+\inf_{c\in C}w^{\T}c=\inf_{x\in\X}w^{\T}f(x).
			\]
			Since $\epsilon >0$ is arbitrary, we have $\alpha \geq \inf_{x\in\X}w^{\T}f(x)$. Together, we obtain $\alpha = \inf_{x\in\X}w^{\T}f(x)$, which implies that $(w^{\T},\alpha)^{\T} $ is $K$-maximal.
			\item Let $F$ be a $C$-minimal proper face of $\P$. Then, there exists a supporting hyperplane $H $ of $\P$ such that  $F=H\cap\P $ \cite[Section 18, page 162]{rockafellar_1970}. We may write $H=\{y\in\R^{q} \mid w^{\T}y=\alpha\} $ for some $w\in \R^q $ and $\alpha\in\R$, and assume that $\P\subseteq \H \coloneqq \{y\in \R^q \mid w^{\T}y\geq \alpha\}$ without loss of generality. By \Cref{rem:w_inC+}, we have $w \in (\rec\P)^+ \subseteq C^+$. Moreover, as $\P\subseteq\H$, it holds true that $\alpha \leq \inf_{x\in \X}w^{\T}f(x)$. Hence, $(w^{\T},\alpha)^{\T}\in\D$. 
		\end{enumerate}
	\end{proof}
	
	\begin{proof}[\textbf{Alternative proof of \Cref{thm:duality}}]
		First, for a $K$-maximal proper face $F^\ast$ of $\D$, we show that $\Psi(F^\ast) $ is a weakly $C$-minimal proper face of $\P$. By \Cref{lem:duality_2} (a), $H(w,\alpha)\cap\P$ is a weakly $C$-minimal proper face of $\P$ for each $(w^{\T},\alpha)^{\T}\in F^\ast$. From the definition given by \eqref{eqn:Psi}, $\Psi(F^\ast)$ is a weakly $C$-minimal proper face of $\P$ if it is nonempty. 
		By \Cref{lem:duality_1} (b), there exists some $y\in\P$ such that $F^\ast=H^\ast(y)\cap\D$. Therefore, for each $(w^{\T},\alpha)^{\T}\in F^\ast$, we have $(w^{\T}\alpha)^{\T}\in H^\ast(y)$, equivalently, $y\in H(w,\alpha)$, see \eqref{eqn:H_H*}. Then, $\Psi(F^\ast)$ is nonempty as $y \in\Psi(F^\ast)$.
		
		For a weakly $C$-minimal proper face $F$ of $\P$, define $\Psi^\ast(F)\coloneqq\bigcap_{ y\in F}(H^\ast(y)\cap\D)$. To show that $\Psi^\ast(F)$ is a $K$-maximal proper face of $\D$, let $y\in F$. By \Cref{lem:duality_1} (a), $H^\ast(y)\cap\D$ is a $K$-maximal proper face of $\D$. Therefore, $\Psi(F^\ast)$ is a $K$-maximal proper face of $\D$, if it is nonempty. 
		From \Cref{lem:duality_2} (b), there exists some $(w^{\T},\alpha)^{\T}\in\D$ such that $F=H(w,\alpha)\cap\P$. Therefore, for each $y\in F$, we have $y\in H(w,\alpha)$, equivalently, $(w^{\T},\alpha)^{\T}\in H^\ast(y) $, see \eqref{eqn:H_H*}. Then, $\Psi^\ast(F)$ is nonempty as $(w^{\T},\alpha)^{\T}\in\Psi^\ast(F)$.
		
		In order to show that $\Psi$ is a bijection and 
		$\Psi^{-1} =\Psi^\ast$, we will show the following two statements:
		\begin{itemize}
			\item[(a)] $\Psi^\ast(\Psi(F^\ast))=F^\ast$ for every $K$-maximal proper face $F^\ast$ of $\D$,
			\item[(b)] $\Psi(\Psi^\ast(F))=F $ for every weakly $C$-minimal proper face $F$ of $\P$.
		\end{itemize}
		(a) Let $F^\ast$ be a $K$-maximal proper face of $\D$. Assume for a contradiction that $ F^\ast \nsubseteq \Psi^\ast(\Psi(F^\ast))$. Let $(w^{\T},\alpha)^{\T}\in F^\ast\setminus\Psi^\ast(\Psi(F^\ast))$. Since $\Psi(F^\ast)$ is nonempty, $(w^{\T},\alpha)^{\T}\notin \Psi^\ast(\Psi(F^\ast))$ means that there exists $\bar{y}\in \Psi(F^\ast)$ such that $(w^{\T},\alpha)^{\T}\notin H^\ast(\bar{y})\cap\D$. This implies $(w^{\T},\alpha)^{\T}\notin H^\ast(\bar{y})$ since $(w^{\T},\alpha)^{\T}\in\D$. Using \eqref{eqn:H_H*}, we have $\bar{y}\notin H(w,\alpha)$. Therefore $\bar{y}\notin\Psi(F^\ast)$, a contradiction. Hence, $F^\ast\subseteq\Psi^\ast(\Psi(F^\ast)) $. For the reverse inclusion, first note that, from \Cref{lem:duality_1} (b), there exists $\bar{y}\in\P$ such that $F^\ast=H^\ast(\bar{y})\cap\D$. Therefore, for each $(w^{\T},\alpha)^{\T}\in F^\ast$, we have $(w^{\T},\alpha)^{\T}\in H^\ast(\bar{y})$, equivalently, $\bar{y}\in H(w,\alpha)$, see \eqref{eqn:H_H*}. 
		Hence,	
		\begin{equation*}
		\bar{y}\in\bigcap_{(w^{\T},\alpha)^{\T} \in F^\ast} \big(H(w,\alpha) \cap \P\big) =\Psi(F^\ast).
		\end{equation*}
		Therefore,
		\begin{equation*}
		\Psi^\ast(\Psi(F^\ast))=\bigcap\limits_{y\in\Psi(F^\ast)}\big(H^\ast(y)\cap\D\big)\subseteq H^\ast(\bar{y})\cap\D=F^\ast.
		\end{equation*}
		Hence, the equality $\Psi^\ast(\Psi(F^\ast))=F^\ast$ holds.
		
		\noindent
		(b) Let $F$ be a weakly $C$-minimal proper face of $\P$. Assume for a contradiction that $F \nsubseteq \Psi(\Psi^\ast(F))$. Let $y\in F\setminus\Psi(\Psi^\ast(F)) $. Then, there exists $(\bar{w}^{\T},\bar{\alpha})^{\T} \in \Psi^\ast(F)$ such that $y\notin H(\bar{w},\bar{\alpha})\cap\P $. This implies $y\notin H(\bar{w},\bar{\alpha}) $ since $y\in\P$. By \eqref{eqn:H_H*}, $(\bar{w}^{\T},\bar{\alpha})^{\T}\notin H^\ast(y) $, which implies $(\bar{w}^{\T},\bar{\alpha})^{\T}\notin\Psi^\ast(F)$, a contradiction. Hence, $\Psi(\Psi^\ast(F))\supseteq F $. For the reverse inclusion, first note that, by \Cref{lem:duality_2} (b), there exists $(\bar{w}^{\T},\bar{\alpha})^{\T}\in\D$ such that $F=H(\bar{w},\bar{\alpha})\cap\P$. Then, for each $y\in F$, we have $y\in H(\bar{w},\bar{\alpha})$, equivalently, $(\bar{w}^{\T},\bar{\alpha})^{\T}\in H^\ast(y) $, see \eqref{eqn:H_H*}. Hence,
		\begin{equation*}
		(\bar{w}^{\T},\bar{\alpha})^{\T}\in \bigcap\limits_{y\in F}\big(H^\ast(y)\cap\D\big)=\Psi^\ast(F). 
		\end{equation*}
		Therefore, 
		\begin{equation*}
		\Psi(\Psi^\ast(F))=\bigcap\limits_{(w^{\T},\alpha)^{\T}\in\Psi^\ast(F)}\big(H(w,\alpha)\cap\P\big)\subseteq H(\bar{w},\bar{\alpha})\cap\P=F.
		\end{equation*}
		Hence, the equality $\Psi(\Psi^\ast(F))=F $ holds.
	\end{proof}
	
\begin{proof}[\textbf{Proof of \Cref{lem:6}}]
	Let $\tilde{\P}:=\{y \in \R^q\mid 
	\forall(w^{\T},\alpha)^{\T}\in\xi(\bar{\W})\colon \varphi(y,w,\alpha)\geq0\}$. Since $\bar{\D}\supseteq \xi(\bar{\W})$, we obtain $\tilde{\P} \supseteq \P_{\bar{\D}}$. To show the reverse inclusion, let us fix $y\in\tilde{\P}$. Let $(w^{\T},\alpha)^{\T} \in \bar{\D}$, that is, there exist $n\in \N$, $\lambda_i\geq0 $, $(\tilde{w}_i^{\T},\tilde{\alpha}_i)^{\T} \in \xi(\bar{\W})$ for $i\in\{1,\ldots,n\}$, $\beta\geq 0$ such that $(w^{\T},\alpha+\beta)^{\T}=\sum_{i=1}^{n}\lambda_i(\tilde{w}_i^{\T},\tilde{\alpha}_i)^{\T}$. In particular, $\tilde{w}_i^{\T}y-\tilde{\alpha}_i\geq 0$ for each $i\in\{1,\ldots,n\}$. Then,
	\[
	\varphi(y,w,\alpha)=(w^{\T}y-\alpha-\beta)+\beta=\sum_{i=1}^{n}\lambda_i\tilde{w}_i^{\T}y-\sum_{i=1}^{n}\lambda_i\tilde{\alpha}_i+\beta=\sum_{i=1}^{n}\lambda_i(\tilde{w}_i^{\T}y-\tilde{\alpha}_i)+\beta\geq 0.
	\]
	Since $\varphi(y,w,\alpha)\geq 0$ for each $(w^{\T},\alpha)^{\T}\in \bar{\D}$, we conclude that $y \in \P_{\bar{\D}}$.
\end{proof}

	\begin{proof}[\textbf{Proof of \Cref{prop:eps_tilde_2}}]
		To start with the proof, we have the following observation: 
		\[
		\D_\epsilon= \cone \conv\left( (\xi(\bar{\W})+\epsilon \{e^{q+1}\}) \cup \{-e^{q+1}\} \right).
		\]
		This implies that the set of extreme directions of $\D_\epsilon$ is a subset of $(\xi(\bar{\W})+\epsilon \{e^{q+1}\}) \cup \{-e^{q+1}\}$. Let $i\in\{1,\ldots,T\}$. By \cite[Section 18, page 162]{rockafellar_1970}, we also have 
		\[
		\{((w^{i1})^\T,\alpha_{i1})^\T,\ldots,((w^{iJ_i})^\T,\alpha_{iJ_i})^\T \}\subseteq (\xi(\bar{\W})+\epsilon \{e^{q+1}\}) \cup \{-e^{q+1}\}.
		\]
		In particular, for $j\in\{1,\ldots,J_i\}$, we have $w^{ij}\in \bar{\W} \cup\{0\}$ and $\alpha_{ij} = \inf_{x\in\X}(w^{ij})^\T f(x)+\epsilon = p^{w^{ij}}+\epsilon$ if $w^{ij}\neq 0$.
		
		
		By \Cref{prop:eps_tilde_1}, we have $\P\supseteq\P_\epsilon $. 
		Following similar steps as in the proof of \Cref{prop:eps_tilde_1}, in order to show that $\P_\epsilon + B(0,\tilde{\epsilon}) \supseteq \P$, we assume the contrary. Then, we obtain $\bar{y}\in\P$, $\bar{w}\in \R^q$ with $\norm{\bar{w}}_\ast=1$ such that
		\begin{equation*}
		\bar{w}^\T\bar{y}+\tilde{\epsilon} < \inf_{y\in\P_\epsilon}\bar{w}^\T y \eqqcolon \bar{\alpha},
		\end{equation*}
		and we can check that $(\bar{w}^\T,\bar{\alpha})^\T \in \D_{\epsilon}$.
		
		By the construction of $\D_\epsilon$, there exists $k\geq 0$ such that $(\bar{w}^\T,\bar{\alpha})^\T+k e^{q+1}\in \bd\D_\epsilon$ is a $K$-maximal element of $\D_\epsilon$. Hence, there exists $I\in\{1,\ldots,T\}$ such that $F_I$ is a $K$-maximal facet of $\D_\epsilon$ and
		\begin{equation}\label{eq:FI}
		(\bar{w}^\T,\bar{\alpha})^\T+k e^{q+1}\in F_I = \cone \conv \{((w^{I1})^\T,\alpha_{I1})^\T,\ldots,((w^{IJ_I})^\T,\alpha_{IJ_I})^\T \}.
		\end{equation}
		Using the $K$-maximality of $F_I$, we may assume that $w^{Ij}\neq 0$, hence $\alpha_{Ij} = p^{w^{Ij}}+\epsilon$ for each $j\in\{1,\ldots,J_I\}$. Then, we can rewrite \eqref{eq:FI} as
		\[
		(\bar{w}^\T,\bar{\alpha})^\T+k e^{q+1}\in F_I = \cone \big(\conv \{((w^{I1})^\T,p^{w^{I1}})^\T,\ldots,((w^{IJ_I})^\T,p^{w^{IJ_I}})^\T \}+\epsilon \{e^{q+1}\} \big).
		\]
		Hence, there exist $\delta \geq0 $ and $\mu \in\Delta^{J_I-1}$ such that 
		\begin{equation}\label{eqn:3}
		(\bar{w}^\T,\bar{\alpha})^\T=\delta\bigg(\sum_{j=1}^{J_I}\mu_{j}((w^{Ij})^\T,p^{w^{Ij}})^\T+\epsilon e^{q+1}  \bigg) - k e^{q+1} .
		\end{equation}
		As exactly in the proof of \Cref{prop:eps_tilde_1}, the aim is to show that $(\bar{w}^\T,\bar{\alpha}-\tilde{\epsilon})^\T \in \D$ in order to get a contradiction. For this purpose, it is sufficient to show that
		\begin{equation} \label{eqn:4}
		\bar{\alpha}-\tilde{\epsilon} \leq \inf_{x\in\X}\bar{w}^\T f(x).
		\end{equation}	
		Following the same steps as in the proof of \Cref{prop:eps_tilde_1}, one can use \eqref{eqn:3} in order to obtain $\bar{\alpha}-\delta\epsilon\leq\inf_{x\in\X} \bar{w}^\T f(x)$ as well as
		\[
		\delta\epsilon \leq \frac{\epsilon}{\underset{\lambda\in\Delta^{J_I-1}}{\min} \norm{\sum_{j=1}^{J_I}\lambda_j w_{Ij}}_\ast} = \frac{\epsilon}{f^I_{\min}} \leq \frac{\epsilon}{\min\{f^1_{\min},\ldots,f^T_{\min} \}} =\tilde{\epsilon}.
		\]
		Then, \eqref{eqn:4} follows from $\bar{\alpha}-\delta\epsilon\leq\inf_{x\in\X} \bar{w}^\T f(x)$ and $\delta\epsilon\leq\tilde{\epsilon}$.
	\end{proof}

	
	\bibliographystyle{plain}
	\bibliography{thesis}
\end{document}